\numberwithin{equation}{section}
\newtheorem{definition}{Definition}[section]
\newtheorem{lemma}[definition]{Lemma}
\newtheorem{theorem}[definition]{Theorem}
\newtheorem{corollary}[definition]{Corollary}
\newtheorem{proposition}[definition]{Proposition}
\newtheorem{em-example}[definition]{Example}
\newtheorem{em-def}[definition]{Definition}        
\newtheorem{em-remark}[definition]{Remark}         
\newtheorem{em-question}[definition]{Question}
\newtheorem{question}[definition]{Question}
\newenvironment{example}{\begin{em-example} \em }{ \end{em-example}}
\newenvironment{remark}{\begin{em-remark} \em }{\end{em-remark}}
\newcommand{\N}{\mathbb N}
\newcommand{\C}{\mathfrak C}
\newcommand{\Q}{\mathbb Q}
\newcommand{\Z}{\mathbb Z}
\newcommand{\T}{\mathcal{T}}
\newcommand{\f}{\phi}
\def\mod{\mathbf{Mod}}
\renewcommand{\hom}{\mathrm{\hom}}
\def\mod{\mathbf{Mod}}
\def\Hom{\mathrm{Hom}}
\def\Im{\mathrm{Im}}
\global\def\mod#1{\mathrm{Mod}\text{-}#1}
\global\def\lmod#1{#1\text{-}\mathrm{Mod}}
\def\coker{\mathrm{CoKer}}
\def\F{\mathcal F}
\renewcommand\ker{\mathrm{Ker}}
\def\M{\mathbb M}
\def\I{\mathcal I}
\def\E{\mathcal E}
\def\A{\mathfrak A}
\def\D{\mathfrak D}
\def\F{\mathcal{F}}
\renewcommand\hom{\mathrm{Hom}}
\def\Gdim{\mathrm{G.dim}}
\def\coker{\mathrm{CoKer}}
\def\Ch{{\bf Ch}}
\def\A{\mathcal A}
\def\Spec{\mathrm{Spec}}
\def\id{id}
\def\Sp{\mathrm{Sp}}
\def\link{\rightsquigarrow}
\def\2link{\stackrel{{}_2}{\link}}
\def\Ann{\mathrm{Ann}}
\def\Im{\mathrm{Im}}
\global\def\mod#1{\mathrm{Mod}\text{-}#1}
\def\F{\mathcal F}
\def\W{\mathcal W}
\def\L{\mathcal L}
\def\M{\mathfrak M}
\def\loc{{\bf L}}
\def\Q{{\bf Q}}
\def\S{{\bf S}}
\def\tor{{\bf T}}
\def\l{{\bf L}}
\def\t{{\bf T}}
\def\tors{\mathrm{Tors}}
\def\Inj{\mathrm{Inj}}
\def\cone{\mathrm{cone}}
\def\Tow{\mathrm{Tow}}
\title{Model approximations for relative homological algebra}
\author{Simone Virili}
\begin{document}

\maketitle
\abstract{Recently, Chach\'olski, Neeman, Pitsch, and Scherer studied, in a series of three papers, model approximations for the unbounded category of cochain complexes over a commutative ring. These approximations allow to construct relative injective resolutions with respect to particular choices of injectives. In this paper we define similar model approximations for cochain complexes on general Grothendieck categories generalizing the previous constructions and reaching a better understanding of the whole picture.}

\smallskip\noindent
--------------------------------

\noindent{\bf 2010 Mathematics Subject Classification.}  Primary: 18G25; 18E40; 55U15. Secondary: 13D09; 13D45; 18E35.
 
\noindent{\bf Key words and phrases.} Injective classes, model approximation, torsion theories, relative homological algebra, derived category.

%\tableofcontents

\section{Introduction} 
{ Model categories} were introduced in the late sixties by Quillen \cite{Quillen}. A {\em model category} $(\M,\W,\cal B,\cal C )$ is a bicomplete category $\M$ with three distinguished classes of morphisms, called respectively weak equivalences, fibrations and cofibrations, satisfying some axioms (see Definition \ref{model_cat}). An important property of model categories is that one can invert weak equivalences, obtaining a new category, called the {\em homotopy category}. 

\smallskip
Given a Grothendieck category $\C$ and denoting by $\Ch(\C)$ the category of unbounded  cochain complexes on $\C$, there always exists a model structure on $\Ch(\C)$, called {\em injective model structure}, whose class of weak equivalences is given by the quasi-isomorphisms (see Example \ref{inj_mod_struc}). The homotopy category arising this way is the unbounded derived category ${\bf D}(\C)$ (see \cite{hovey} for more details).  Having a model structure on $\Ch(\C)$ allows for concrete constructions of derived functors.

\smallskip
The concept of { model approximation} was introduced by Chach{\'o}lski and Scherer \cite{PS} with the aim of constructing homotopy limits and colimits in arbitrary model categories. The advantage of model approximations is that it is easier in general to prove that a given category has a  model approximation than defining a model structure on it. On the other hand, model approximations allow to construct derived functors and to define the homotopy category. More precisely, consider a category $\C$ with a distinguished class of morphisms $\W_\C$ and a model category $(\M,\W,\cal B,\cal C )$. A {\em model approximation} of $(\C,\W_\C)$  by $(\M,\W,\cal B,\cal C )$ is a pair of adjoint functors 
$$l :  \xymatrix{\C\ar@<2pt>[r]\ar@<-2pt>@{<-}[r]&\M}  : r$$
such that $l$ sends the elements of $\W_\C$ to elements of $\W$ and some other technical conditions (for more details see Definition \ref{mod_appr}).

\smallskip
Chach\'olski,  Pitsch, and Scherer \cite{CPS1} introduced a useful model approximation for the category of unbounded complexes $\Ch(R)$ over a ring $R$, whose homotopy category is the usual derived category ${\bf D}(R)$. This construction encodes in a pair of adjoint functors the classical ideas to construct injective resolutions of unbounded complexes. The aim of the successive  paper \cite{CPS2} of the same three authors together with Neeman,  is to modify the construction of \cite{CPS1} in order to obtain a ``model approximation for relative homological algebra". 

\smallskip
Let us be more specific about the meaning of {\em relative homological algebra} in this context. Consider a Grothendieck category $\C$, roughly speaking, an {\em injective class} is a suitable class $\I$ of objects of $\C$ that is meant to represent a ``different choice" for the injective objects in the category (see Definition \ref{def.inj.class}). Given  a class $\I$ one says that a morphism $\phi$ in $\C$ is an {\em $\I$-monomorphism} provided $\Hom_\C(\phi, I)$ is an injective map for all $I\in\I$. In particular, $\I$-monomorphisms are the usual monomorphisms provided $\I$ is the class of all injective objects. 
Chach\'olski,  Pitsch, and Scherer \cite{CPS} studied the injective classes of the category of modules $\mod R$ over a commutative ring $R$, classifying all the injective classes of injective objects  in terms of suitable families of ideals, called {\em saturated sets}.  In case $R$ is also Noetherian, their classification reduces to a bijection between the set of injective classes of injectives and the family of generalization closed subsets of the spectrum of $R$.

\smallskip
Given a Grothendieck category $\C$ and an injective class of injective objects $\I$, one says that a morphism of unbounded complexes $\phi^\bullet:M^\bullet\to N^\bullet$ is an {\em $\I$-quasi-isomorphism} provided $\hom_{\C}(\phi^\bullet,I)$ is a quasi-isomorphism of complexes of Abelian groups for all $I\in \I$. The following questions naturally arise:

\begin{question}\label{quest_iniziale}
In the above notation, denote by $\W_\I$ the class of $\I$-quasi-isomorphisms, then
\begin{enumerate}[\rm (1)]
\item is it possible to find a model approximation for $(\Ch(\C),\W_\I)$? If so, what does the homotopy category of such approximation look like?
\item Is it possible to find, in analogy with \cite{CPS1}, a model approximation via towers of model categories of half bounded complexes (see Subsection \ref{towers}) which allows to construct inductively relative injective resolutions of unbounded complexes?
\end{enumerate}
\end{question}
Chach\'olski, Neeman, Pitsch, and Scherer \cite{CPS2} partially answer the above questions in case the category $\C$ is the category of modules over a commutative Noetherian ring $R$. In fact, they show that, if $R$ has finite Krull dimension, then it is possible to find an approximation via towers of models whose homotopy category is obtained inverting $\I$-quasi-isomorphisms, for any injective class of injectives $\I$. On the other hand, if the Krull dimension of $R$ is not finite, there always exists an injective class of injectives $\I$ for which this is not possible.

\smallskip
In this paper, we try to tackle the above questions in the general setting of Grothendieck categories. Recall that, given a Grothendieck category $\C$, a {\em torsion theory} $\tau=(\T,\F)$ is a pair of subclasses of $\C$, maximal with respect to the condition that $\Hom_\C(T,F)=0$ for all $T\in\T$ and $F\in\F$. Furthermore, $\tau$ is {\em hereditary} provided $\F$ is closed under taking injective envelopes (or, equivalently, $\T$ is closed under taking sub-objects). The key observation from which this paper starts is that there is a bijective correspondece between injective classes of injectives and hereditary torsion theories induced by the following correspondence:
$$\tau=(\T,\F)\xymatrix{{}\ar@{|->}[r]&{}} \I_\tau=\{\text{injective objects in $\F$}\}\,.$$
Then one can see that a morphism is an $\I_{\tau}$-monomorphism if an only if its kernel is $\tau$-torsion (i.e., it belongs to $\T$). Recall that it is possible to associate to any hereditary torsion theory $\tau$ a localization of the category $\C$, which is encoded in the following pair of adjoint functors:
\begin{equation*}\xymatrix{%\T \ar@<2,5pt>[rr]^{ inc}& & 
\C%\ar@<2,5pt>[ll]^{\tor_\tau}
\ar@<2,5pt>[rr]^{\Q_\tau}& &
\C/\T\ar@<2,5pt>[ll]^{\S_\tau}\, ,}\end{equation*}
where $\C/\T$ is a Grothendieck category, which is called the {\em localization of $\C$ at $\tau$}. The {\em $\tau$-quotient functor} $\Q_\tau$ is exact, while the {\em $\tau$-section functor} $\S_\tau$ is just left exact in general (see Section \ref{tt_sec} for details). With this formalism one can see that a morphism $\phi$ in $\C$ is an $\I_\tau$-monomorphism if and only if $\Q_\tau(\phi)$ is a monomorphism in $\C/\T$.

\smallskip
The bijection with hereditary torsion theories allows us to prove that, in a given Grothendieck category $\C$, there is just a set (as opposed to a proper class) of injective classes of injective objects. Furthermore, if $\C$ has Gabriel dimension (see Subsection \ref{gab_cat_subs}) and it is stable (which just means that any hereditary torsion class in $\C$ is closed under taking injective envelopes), then the injective classes of injective objects are in bijection with the family of generalization closed subsets of the Gabriel spectrum of $\C$ (see Corollary \ref{coro_inj}). In particular, our classifications are natural generalizations of the classifications described in \cite{CPS}, see Corollaries \ref{sat_sat} and \ref{sat_noeth}. 

\smallskip
Finally, we can answer part (1) of Question \ref{quest_iniziale} in full generality. First of all, one extends the functors $\Q_\tau$ and $\S_\tau$ to the categories $\Ch(\C)$ and $\Ch(\C/\T)$ (just applying them degree-wise), this gives rise to an adjunction. Abusing notation, we use the same symbols for these new functors. Then, one proves that a morphism of complexes $\phi^\bullet$ in $\Ch(\C)$ is an $\I_\tau$-quasi-isomorphism if and only if $\Q_\tau(\phi^\bullet)$ is a quasi-isomorphism in $\Ch(\C/\T)$. Furthermore, if we endow $\Ch(\C/\T)$ with the canonical injective model structure, there is a model approximation
$$\Q_\tau :  \xymatrix{(\Ch(\C),\W_{\I_\tau})\ar@<2pt>[r]\ar@<-2pt>@{<-}[r]&\Ch(\C/\T)}  : \S_\tau\,.$$
The homotopy category associated with this model approximation has a very concrete form: it is precisely the derived category ${\bf D}(\C/\T)$, see Theorem \ref{main_general}. 

\smallskip
The answer to part (2) of Question \ref{quest_iniziale} is more delicate. First of all, one needs to understand what fails in the construction of \cite{CPS2} and why. Indeed, Chach\'olski et al. try to directly approximate the category $(\Ch(\C),\W_{\I_\tau})$ by a category of ``towers of models". We try a slightly different approach, which looks for a model approximation in two successive steps. In this way we are able to identify what goes wrong in the approximation process: the quotient category $\C/\T$ may fail to be $(Ab.4^*)$-$k$ for all $k\in\N$ (see Definition \ref{quasi_exact_prod}), even if $\C$ is a very nice category (say a category of modules over a commutative Noetherian ring). This, a fortiori trivial, observation is sufficient to explain why the towers of models cannot approximate $(\Ch(\C),\W_{\I_\tau})$ in general. In fact, there is no reason for an object $X^\bullet$ in the homotopy category $\bf D(\C/\T)$ of $(\Ch(\C),\W_{\I_\tau})$ for being isomorphic to the limit of its truncations if $\C/\T$ is not $(Ab.4^*)$-$k$ for any $k\in\N$. Now, going back to the original question, we can partially answer as follows: one can always find a model approximation of $(\Ch(\C),\W_{\I_\tau})$ by towers of model categories of half-bounded complexes over $\C/\T$ provided $\C/\T$ is $(Ab.4^*)$-$k$ for some $k\in\N$, see Theorem \ref{main_th} and Corollary \ref{main_co}.

\subsection*{Structure of the paper} 
In Section \ref{tt_sec} we give some standard results about localization of Grothendieck categories also introducing the  notion of relative Gabriel dimension of a category. A large part of the section is devoted to the study of the derived functors of the torsion functors, called {\em local cohomologies}. We give detailed proofs for all the results about local cohomology, without claiming they are original, because we were not able to find a unique reference where the results we need are proved in the required generality. As an application of this generalized local cohomology theory, we  give explicit conditions to ensure that a quotient category $\C/\T$ of a Grothendieck category $\C$ over a hereditary torsion theory is ($Ab.4^*$)-$k$ for some $k\in\N$. \\
In Section \ref{Sec_inj_class}, we describe the bijection between hereditary torsion theories and injective classes of injectives, also presenting the announced classifications and giving some examples.\\
Finally, in Section \ref{Sec_mod_approx} we give the answers to Question \ref{quest_iniziale} that we briefly exposed above.

\subsection*{Acknowledgement}
I would like to thank my
Ph.D. advisor Dolors Herbera for her encouragement and for several discussions on the subjects of this paper. I also want to thank Wolfgang Pitsch for sending me copies of his preprints and for discussing with me many of his ideas.

\subsection*{Notations and terminology}
Let $\C$ be a category. Given a subclass $\A$ of objects of $\C$, we define the following two classes
$$\A^{\perp}=\{X\in \C:\hom_\C(A,X)=0\,, \; \forall A\in \A\}\ \ \text{ and }\ \ {}^{\perp}\A=\{X\in \C:\hom_\C(X,A)=0\,, \; \forall A\in \A\}\,,$$
which are called respectively {\em right} and {\em left orthogonal class} to $\A$.
\\
Let  $(I, \leq)$ be a directed set. Recall that a  directed system in $\C$ is a family of objects and maps $(X_i,\phi_{j,i}:X_i\to X_j)_{i\leq j\in I}$. If no confusion is possible, we denote such system by $(X_i,\phi_{j,i})_{I}$. If the direct limit of $(X_i,\phi_{j,i})_{I}$ exists, the limit is denoted by $\varinjlim_IX_i$. We follow analogous conventions for an inverse systems and inverse limits. Furthermore, given a set $J$ and a family of objects $\{X_j:j\in J\}$ we denote by $\prod_JX_j$ and $\bigoplus_JX_j$ the product and the coproduct respectively. If $\alpha$ is a cardinal and $X$ is an object, we let $X^\alpha$ and $X^{(\alpha)}$ be the product and the coproduct respectively of $\alpha$-many copies of $X$.

\smallskip
Let now $\C$ be a {\em Grothendieck category}. This means exactly that $\C$ is a cocomple abelian category with enough injectives, a generator and exact coproducts. 
\\
Given an object $X$ in $\C$, we write $Y\leq X$ (resp., $Y<X$) to mean that $Y$ is a sub-object (resp., a proper sub-object) of $X$. Recall that the family of all sub-objects of $X$ has a lattice structure, we denote this lattice by $\L(X)$. Given $Y_1$ and $Y_2\in \L(X)$, we let $Y_1+Y_2$ and $Y_1\cap Y_2$ be respectively the least upper bound and the greatest lower bound of $Y_1$ and $Y_2$ in $\L(X)$. 
Given a directed system  $(X_i,\phi_{j,i})_{I}$ of sub-objects of $X$, their direct limit $\varinjlim_IX_i$ corresponds with the least upper bound of the family $\{X_i:i\in I\}$ in $\L(X)$; we denote this particular direct limit also by $\sum_{i\in I}X_i$.
\\
We denote by $E(X)$ the injective envelop of $X$. An object $C\in \C$ is {\em uniform} if and only if $E(C)$ is indecomposable or, equivalently, if given two non-trivial sub-objects $A,$ $B\leq C$ we have that $A\cap B\neq 0$.

A cochain complex $X^\bullet$ is a sequence of objects $\{X^n:n\in\Z\}$ and differentials $d^n:X^n\to X^{n+1}$ for all $n\in\Z$, such that $d^{n+1}d^n=0$ for all $n\in\Z$. A morphism of complexes $\phi^\bullet:X^\bullet\to Y^\bullet$ is a family of morphisms $\{\phi^n\colon X^n\to Y^n:n\in\Z\}$, such that $d^{n+1}\phi^n=\phi^{n+1}d^n$ for all $n\in\Z$.\\
We denote by $\Ch(\C)$ the category of cochain complexes and of morphisms of complexes. Furthermore, for all $n\in\Z$, we let $\Ch^{\geq n}(\C)$ be the full subcategory of $\Ch(\C)$ of all cochain complexes $X^\bullet$ such that $X^{k}=0$ for all $k<n$. \\
Given a cochain complex $X^\bullet$, we denote by $X^{\geq n}\in\Ch^{\geq n}(\C)$ the {\em $n$-th truncation} of $X^\bullet$, that is
$$X^{\geq n}: \xymatrix{ &\cdots\ar[r]&0\ar[r]&0\ar[r]&\coker(d^{n-1})\ar[r]&X^{n+1}\ar[r]&X^{n+2}\ar[r]&\cdots}\,.$$
Given two complexes $X^\bullet$ and $Y^\bullet\in\Ch(\C)$, $\mathcal{H}{om}(X^\bullet,Y^\bullet)$ is a cochain complex of abelian groups whose $n$-th component is
$$\mathcal{H}{om}(X^\bullet,Y^\bullet)^n=\prod_{i+j=n}\hom_\C(X^i,Y^j)\,.$$
Furthermore, given a morphism $\phi^\bullet:X^\bullet\to Y^\bullet$, the {\em mapping cone} $\mathrm{cone}(\phi^\bullet)$ is a cochain complex whose $n$-th component is $X^{n+1}\oplus Y^n$ and whose differentials are represented in matrix form as follows:
$$\begin{pmatrix}
d&0\\
\phi^n&d
\end{pmatrix}:X^{n+1}\oplus Y^n\xymatrix{\ar[r]&} X^{n+1}\oplus Y^n\,.$$
Finally, given a ring $R$, we denote by $\lmod R$ the category of left $R$-modules over $R$.
\section{Torsion theories and localization}\label{tt_sec}
%In this section we introduce all the formalism related to torsion theories which is needed in the rest of the paper. In particular, in Subsection \ref{tt_subs} we introduce the formalism of torsion theories, while in Section \NB we describe the connection between localization and torsion theories. \NB Then, in Subsection \ref{gab_cat_subs} we use torsion theories to define various cardinal invariants of the objects of $\C$, called (relative) Gabriel dimensions. Finally, in Subsection \ref{loc_coh_subs} we prove some results about a general notion of local cohomology for Grothendieck categories and the effects that having ``small" Grabriel dimension has on the vanishing of such local cohomologies.

\subsection{Torsion theories}\label{tt_subs}
Let $\C$ be a Grothendieck category and let $\A\subseteq \C$ be a subclass. Recall that $\A$ is a Serre class if, given a short exact sequence
$$0\to A\to B\to C\to 0\,,$$
$B$ belongs to $\A$ if and only if both $B$ and $C$ belong to $\A$.  Furthermore, $\A$ is a {\em hereditary torsion class} (or {\em localizing class}) if it is Serre and it is closed under taking arbitrary direct sums. On the other hand,  $\A$ is a {\em hereditary torsion free class} provided it is closed under taking sub-objects, extensions, products and injective envelopes. 

\begin{definition}
A {\em hereditary torsion theory} $\tau$ in $\C$ is a pair of classes $(\T, \F)$ such that 
\begin{enumerate}[\rm --]
\item the class $\T$ of {\em $\tau$-torsion} objects is a hereditary torsion class;
\item the class $\F$ of {\em $\tau$-torsion free} objects is a hereditary torsion free class;
\item $(\T)^{\perp}=\F$ and $^{\perp}(\F)=\T$.
\end{enumerate}
In this paper the symbols $\tau$, $\T$ and $\F$ are always used to denote a torsion theory, a torsion class and a torsion free class respectively. Since all the torsion theories in the sequel are hereditary, we just say ``torsion theory'', ``torsion class" and ``torsion free class" to mean respectively ``hereditary torsion theory'', ``hereditary torsion class'' and ``hereditary torsion free class''. 
\end{definition}
Of course, there is some redundant information in the above definition. 

\begin{example}
\begin{enumerate}[\rm (1)]
\item The pairs $(0, \C)$ and $(\C,0)$ are torsion theories. We call them respectively the {\em trivial} and the {\em improper} torsion theory. 
\item Given an injective object $E$ in $\C$, one can define a torsion theory $\tau=(\T,\F)$, with $\T={}^{\perp}\{E\}$ and $\F=\T^{\perp}$; such $\tau$ is said to be the {\em torsion theory cogenerated by $E$}.
\end{enumerate}
\end{example}

\begin{lemma}\label{cog}
Let $\C$ be a Grothendieck category and let $\tau=(\T,\F)$ be a torsion theory in $\C$. Then $\tau$ is cogenerated by an injective object $E$.
\end{lemma}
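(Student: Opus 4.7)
The plan is to build $E$ explicitly as a product of injective envelopes of torsion-free quotients of a generator of $\C$, and then to verify that $\tau$ coincides with the torsion theory cogenerated by $E$.

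First, I would fix a generator $G$ of $\C$. Since every Grothendieck category is well-powered, the isomorphism classes of quotients of $G$ form a genuine set; let $\{F_i\}_{i\in I}$ be a set of representatives of the classes of those quotients that lie in $\F$, and set
\[
E \;:=\; \prod_{i\in I} E(F_i).
\]
Then $E$ is injective as a product of injectives, while each $E(F_i)\in\F$ by closure of $\F$ under injective envelopes, and $E\in\F$ by closure of $\F$ under arbitrary products. In particular, the inclusion $\T\subseteq{}^\perp\{E\}$ is automatic from $E\in\F=\T^\perp$.

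For the reverse inclusion I would argue by contraposition. If $X\notin\T$, then the sum $t(X):=\sum\{Y\le X : Y\in\T\}$ is still in $\T$ (because $\T$ is closed under coproducts and quotients, hence under directed colimits of sub-objects), and $X/t(X)$ is a nonzero object of $\F$. Using that $G$ is a generator, I choose a nonzero morphism $G\to X/t(X)$; its image is a quotient of $G$ sitting inside $X/t(X)\in\F$, hence, by closure of $\F$ under sub-objects, it lies in $\F$ and is therefore isomorphic to some $F_i$. Injectivity of $E(F_i)$ then extends the inclusion $F_i\hookrightarrow X/t(X)$ to a morphism $X/t(X)\to E(F_i)$ whose restriction to $F_i$ is nonzero; composing with the surjection $X\twoheadrightarrow X/t(X)$ and with the split monomorphism $E(F_i)\hookrightarrow E$ (identity on the $i$-th factor and zero on every other factor, determined by the universal property of the product) yields a nonzero morphism $X\to E$, contradicting $X\in{}^\perp\{E\}$.

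Once $\T={}^\perp\{E\}$ is established, the equality $\F=\T^\perp=({}^\perp\{E\})^\perp$ is part of the definition of a torsion theory, so $\tau$ is indeed cogenerated by $E$. The only genuinely delicate point I anticipate is the set-theoretic step: one must verify that the family $\{F_i\}$ is a set rather than a proper class and that the resulting $E$ sits in $\F$. Both difficulties are resolved by combining the Grothendieck axioms on $\C$ (well-poweredness and existence of a generator) with the closure properties of a hereditary torsion-free class under sub-objects, injective envelopes, and products.
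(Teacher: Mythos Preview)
Your proposal is correct and follows essentially the same approach as the paper's sketch: both take $E$ to be the product of the injective envelopes of the $\tau$-torsion free quotients of a chosen generator and then verify $\T={}^{\perp}\{E\}$. Your version simply fills in the details that the paper omits (the contrapositive argument via $X/t(X)$ and the use of the generator to produce a nonzero map into some $E(F_i)$); the only cosmetic slip is the phrase ``extends the inclusion $F_i\hookrightarrow X/t(X)$'', where what you mean is that injectivity extends the map $F_i\hookrightarrow E(F_i)$ \emph{along} that inclusion.
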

\begin{proof}[Sketch of proof] 
Take a generator $G$ of $\C$ and let $E$ be the product of all the injective envelopes of the $\tau$-torsion free quotients of $G$. Then, $\T$ coincides with ${}^{\perp}\{E\}$ and $\F=\T^\perp$ is the class of all the objects that embed in some product of copies of $E$.
\end{proof}

The sketched proof of Lemma \ref{cog} also shows that the torsion theories in a Grothendieck category $\C$ form a set, not a proper class (in fact, one can bound the cardinality of this set by the cardinality of the power set of the family of quotients of a chosen generator $G$ of $\C$). 

\begin{definition}
We denote by $\tors(\C)$ the poset of all the torsion theories on $\C$, ordered as follows: given $\tau=(\T,\F)$ and $\tau'=(\T',\F')\in\tors(\C)$, 
$$\tau'\preceq \tau \ \text{ if and only if }\ \T'\subseteq \T\ \text{ if and only if }\ \F\subseteq \F' \,.$$  
When $\tau'\preceq \tau$, we say that $\tau$ is a {\em generalization} of $\tau'$, while $\tau'$ is a {\em specialization} of $\tau$.
\end{definition}

\subsubsection{Stable torsion and locally Noetherian Grothendieck categories}
The notion of stable torsion theory was introduced by Gabriel in \cite{Gabriel}, see also \cite{papp} and \cite{stab}.

\begin{definition} 
A torsion theory $\tau=(\T,\F)\in \tors(\C)$ is {\em stable} if $\T$ is closed under taking injective envelopes. Furthermore, $\C$ is {\em stable} if any $\tau\in \tors(\C)$ is stable. 
\end{definition}

\begin{lemma}\label{split_inj_lemma}
Let $\C$ be a Grothendieck category, let $\tau\in\tors(\C)$ and let $E$ be an injective object in $\C$. If $\tau$ is stable, then
\begin{equation*}\label{split_inj}E\cong \t_\tau(E)\oplus E/\t_\tau(E)\,.\end{equation*} 
\end{lemma}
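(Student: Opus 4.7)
The plan is to show that $\t_\tau(E)$ is itself injective; once that is established, the inclusion $\t_\tau(E)\hookrightarrow E$ splits automatically (any monomorphism out of an injective object is split), which gives the desired decomposition $E\cong \t_\tau(E)\oplus E/\t_\tau(E)$.

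To prove injectivity of $\t_\tau(E)$, I would argue as follows. Let $J=E(\t_\tau(E))$ be the injective envelope of $\t_\tau(E)$. Since $E$ is injective and contains $\t_\tau(E)$, the identity on $\t_\tau(E)$ extends to a morphism $J\to E$, and this morphism is a monomorphism because $\t_\tau(E)$ is essential in $J$ (any nonzero subobject of $J$ meets $\t_\tau(E)$, hence maps nontrivially into $E$). So we may regard $J$ as a subobject of $E$ with $\t_\tau(E)\leq J\leq E$. Now invoke the stability hypothesis: since $\t_\tau(E)\in\T$ and $\T$ is closed under taking injective envelopes, we have $J\in \T$. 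But $\t_\tau(E)$ is by definition the largest $\tau$-torsion subobject of $E$, so $J\leq \t_\tau(E)$, forcing $J=\t_\tau(E)$. Hence $\t_\tau(E)$ coincides with its own injective envelope, i.e.\ it is injective.

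With $\t_\tau(E)$ injective, the short exact sequence
\[
0\to \t_\tau(E)\to E\to E/\t_\tau(E)\to 0
\]
splits by lifting the identity of $\t_\tau(E)$ through itself, producing a retraction $E\to \t_\tau(E)$ and therefore the stated isomorphism.

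I do not expect any genuine obstacle; the only point requiring a little care is the verification that the canonical map $E(\t_\tau(E))\to E$ extending the inclusion is mono, but this is immediate from essentiality of $\t_\tau(E)$ in its injective envelope. The whole argument is really a packaging of the two facts that (i) in a Grothendieck category, injective subobjects split off, and (ii) stability is exactly the condition needed to force the injective envelope of $\t_\tau(E)$ to remain torsion and hence equal to $\t_\tau(E)$ by maximality.
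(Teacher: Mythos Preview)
Your proposal is correct and follows essentially the same route as the paper: embed the injective envelope $E(\t_\tau(E))$ into $E$, use stability to see it is $\tau$-torsion, conclude by maximality that $\t_\tau(E)=E(\t_\tau(E))$ is injective, and then split the short exact sequence. The paper's version is terser (it simply ``identifies'' $E(\t_\tau(E))$ with a subobject of $E$), whereas you spell out the essentiality argument for why the extension $E(\t_\tau(E))\to E$ is a monomorphism, but the underlying argument is the same.
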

\begin{proof} 
Identify $\t_\tau(E)$ and its injective envelope with sub-objects of $E$, so that $\t_\tau(E)\leq E(\t_\tau(E))\leq E$. Since $E(\t_\tau(E))$ is $\tau$-torsion by stability, $E(\t_\tau(E))\leq \t_\tau(E)$. Thus, $\t_\tau(E)=E(\t_\tau(E))$. Having proved that $\t_\tau(E)$ is injective, the direct sum decomposition claimed in \eqref{split_inj} follows. 
\end{proof}

A Grothendieck category $\C$ is {\em locally Noetherian} if it has a set of Noetherian generators, equivalently, any object in $\C$ is the direct union of the directed family of its Noetherian sub-objects. In the following proposition and corollary we collect some results about locally Noetherian categories:

\begin{proposition}\label{prop_noeth}{\rm\cite{Gabriel, Ste}}
Let $\C$ be a locally Noetherian Grothendieck category. The following statements hold true. 
\begin{enumerate}[\rm (1)]
\item Direct limits of injective objects are injective.
\item Given an injective object $E$, there exist a set $I$, a family of pairwise non-isomorphic indecomposable injective objects $\{E_i:i\in I\}$ and a family of non-trivial cardinals $\{\alpha_i:i\in I\}$ such that
$$E\cong \bigoplus_{i\in I}E_i^{(\alpha_i)}\,.$$
Furthermore, given a set $I'$, a family of pairwise non-isomorphic indecomposable injective objects $\{E'_i:i\in I'\}$ and a family of non-trivial cardinals $\{\alpha'_i:i\in I'\}$ such that $E\cong \bigoplus_{i\in I'}(E'_i)^{(\alpha'_i)}$, there is a bijective map $\sigma:I\to I'$ such that $E_i\cong E_{\sigma(i)}'$ and $\alpha_i=\alpha_{\sigma(i)}'$.
\end{enumerate}
\end{proposition}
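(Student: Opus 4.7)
The plan is to prove the two items largely independently, exploiting two consequences of the locally Noetherian hypothesis: $\C$ has Noetherian generators, and every Noetherian object $N$ is finitely presented, so $\hom_\C(N,-)$ commutes with directed colimits.

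For part (1), I would invoke Baer's criterion in the Grothendieck setting: $E\in\C$ is injective if and only if, for every Noetherian sub-object $N$ of a fixed Noetherian generator, every morphism $f:U\to E$ from a sub-object $U\leq N$ extends to $N\to E$. Given a directed system $(E_i,\phi_{j,i})_I$ of injectives with colimit $E=\varinjlim_I E_i$ and structure maps $\psi_i:E_i\to E$, the sub-object $U$ is itself Noetherian and hence finitely presented, so $f$ factors as $\psi_i\circ f'$ for some $i\in I$ and some $f':U\to E_i$. Injectivity of $E_i$ yields an extension $g:N\to E_i$ of $f'$, and $\psi_i\circ g$ then extends $f$, proving that $E$ is injective.

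For the existence part of (2), I would first show that every nonzero injective object $E$ contains an indecomposable injective sub-object: pick a nonzero Noetherian sub-object $N\leq E$; Noetherianity forces $N$ to have finite Goldie dimension (an infinite internal direct sum in $N$ would violate the ACC), so $N$ contains a uniform sub-object $V$, and $E(V)\leq E$ is an indecomposable injective sub-object of $E$. Now apply Zorn's lemma to the poset of families of independent indecomposable injective sub-objects of $E$ to obtain a maximal such family $\{F_j:j\in J\}$, with internal direct sum $F=\bigoplus_J F_j\leq E$. By part (1), $F$ is injective, being the directed colimit of its finite partial sums, each of which is a finite direct sum of injectives and thus injective. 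Hence $F$ splits off as $E=F\oplus C$; if $C\neq 0$, the first step produces an indecomposable injective $G\leq C$ and $\{F_j\}_{j\in J}\cup\{G\}$ is a strictly larger independent family, a contradiction. Thus $E=F$, and grouping by isomorphism type yields the required form.

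Uniqueness in (2) follows from Azumaya's generalization of the Krull--Schmidt theorem: an object decomposed as a direct sum of objects with local endomorphism rings has all such decompositions equivalent, in the sense that there is a bijection between index sets matching isomorphic summands. Indecomposable injectives in a Grothendieck category have local endomorphism rings (any endomorphism is either a monomorphism, and then an automorphism by essentiality of the injective envelope of its image, or has nonzero kernel, in which case both it and its complement cannot simultaneously be automorphisms). Applying Azumaya's theorem to the two decompositions of $E$ and then regrouping isomorphic summands delivers the bijection $\sigma:I\to I'$ with $E_i\cong E'_{\sigma(i)}$ and $\alpha_i=\alpha'_{\sigma(i)}$. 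The main obstacle is the infinite-cardinal bookkeeping in Azumaya's theorem, which is the only nontrivial ingredient once (1) is in hand; the existence arguments, by contrast, reduce via (1) to transparent Baer- and Zorn-style manipulations.
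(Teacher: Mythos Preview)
The paper does not actually supply a proof of this proposition: it is stated with citations to Gabriel and Stenstr\"om and then used as a black box. Your argument is the standard one found in those references and is essentially correct. Two minor remarks: in part~(1) you should quantify over a generating \emph{set} of Noetherian objects rather than ``a fixed Noetherian generator'' (a locally Noetherian category need not have a single Noetherian generator); and your sketch of why $\End(E)$ is local for $E$ indecomposable injective is slightly garbled---the clean statement is that if $f$ is not monic then $1-f$ must be monic (since $\ker(f)\cap\ker(1-f)=0$ in the uniform object $E$), hence an automorphism, which is exactly the local-ring condition.
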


\begin{corollary}\label{coro_noeth}
Let $\C$ be a locally Noetherian Grothendieck category and let $E=\bigoplus_{i\in I}E_i^{(\alpha_i)}$ be an injective object written as a coproduct of indecomposable injectives (for all $i\in I$, $E_i$ is indecomposable injective and $\alpha_i$ is a non-trivial ordinal). Then, 
$$\T=\bigcap\left\{\T_i:i\in I\right\}\,,$$
where $\T={}^{\perp}\{E\}$ and $\T_i={}^{\perp}\{E_i\}$ for all $i\in I$.
\end{corollary}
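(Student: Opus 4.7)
The plan is to prove the two inclusions $\T \subseteq \bigcap_{i\in I} \T_i$ and $\bigcap_{i\in I} \T_i \subseteq \T$ separately. The first is essentially formal: for each $i\in I$ the coproduct injection $\iota_i\colon E_i \hookrightarrow E$ is a (split) monomorphism, so any $\phi\in\Hom_\C(X,E_i)$ with $X\in\T$ satisfies $\iota_i\circ\phi = 0\in\Hom_\C(X,E) = 0$, forcing $\phi=0$. Hence $X\in\T_i$ for every $i$.

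The reverse inclusion is the substantive direction, and the main obstacle is that $\Hom_\C(-,E)$ does not in general split as a product over the summands $E_i^{(\alpha_i)}$ of $E$, so one cannot test vanishing of morphisms into $E$ componentwise without first bounding their ``size''. Suppose $X\in\bigcap_i\T_i$ and take any $\phi\colon X\to E$. Since $\C$ is locally Noetherian, write $X=\sum_\lambda X_\lambda$ as the directed union of its Noetherian sub-objects; it then suffices to show that each restriction $\phi|_{X_\lambda}$ is zero. First I would remark that, by injectivity of each $E_i$, any morphism $X_\lambda\to E_i$ extends along the inclusion $X_\lambda\hookrightarrow X$ to a morphism $X\to E_i$, which is zero by assumption; thus $X_\lambda\in\T_i$ for every $i$. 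This reduces the problem to the case in which $X$ itself is Noetherian, hence finitely generated (any directed union of sub-objects of a Noetherian object stabilizes).

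To conclude, I would invoke the standard fact that in a Grothendieck category the functor $\Hom_\C(X,-)$ on a finitely generated $X$ preserves directed unions of sub-objects. Writing $E$ as the directed union of its finite sub-coproducts $\bigoplus_{i\in F}E_i^{(\alpha_i)}$ indexed by finite $F\subseteq I$, it follows that $\phi$ factors through some such finite sub-coproduct. Applying the same principle to each summand $E_i^{(\alpha_i)}=\bigoplus_{\alpha_i}E_i$ further factors $\phi$ through a finite sub-coproduct $\bigoplus_{i\in F}E_i^{(n_i)}$ with each $n_i<\omega$. Since a finite coproduct in an abelian category is also a product, one has well-defined projections onto each copy of $E_i$; each of these projections is a morphism $X\to E_i$, and so is zero by the standing hypothesis $X\in\T_i$. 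Consequently $\phi=0$, proving $X\in\T$ and closing the argument. I expect the only delicate point to be the clean reduction from an arbitrary object to a Noetherian one, since one must use both the local Noetherianity of $\C$ and the injectivity of each indecomposable summand $E_i$ to transfer the vanishing of $\Hom_\C(X,E_i)$ down to sub-objects of $X$.
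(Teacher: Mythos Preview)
Your argument is correct. Both inclusions are handled cleanly: the first via the split monomorphisms $E_i\hookrightarrow E$, and the second via the reduction to Noetherian objects followed by the fact that $\Hom_\C(X,-)$ preserves directed unions of sub-objects when $X$ is Noetherian (hence finitely generated), so that any map from $X$ into the coproduct $E$ factors through a finite sub-coproduct, where the componentwise vanishing applies.

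The paper itself states this result as a corollary of Proposition~\ref{prop_noeth} and supplies no proof, so there is no alternative argument to compare against; your write-up is exactly the natural way to spell out the details. One minor simplification: you do not need the two-stage factorization (first finite $F\subseteq I$, then finite $n_i<\omega$). Rewriting $E=\bigoplus_{j\in J}E'_j$ with $J=\coprod_{i\in I}\alpha_i$ as a single coproduct of indecomposables, you get in one step that $\phi$ factors through a finite sub-coproduct $\bigoplus_{j\in F}E'_j$ for some finite $F\subseteq J$. Also, the paragraph verifying that each Noetherian sub-object $X_\lambda$ lies in $\bigcap_i\T_i$ can be shortened to the observation that ${}^{\perp}\{E_i\}$ is closed under sub-objects because $E_i$ is injective---but your explicit extension argument is of course equivalent.
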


\subsection{Localization}
The concept of torsion theory is intimately related to that of localization.
\begin{definition}
A {\em localization} of $\C$ is a pair of adjoint functors $\Q:\C\rightleftarrows \D :\S$, where $\S$ is fully faithful and $\Q$ is exact. In this situation, $\D$ is called a {\em quotient category}, $\Q$ is a {\em quotient functor} and $\S$ is a {\em section functor}. The composition $\loc=\S\circ\Q:\C\to \C$ is called the {\em localization functor}.
\end{definition}

\begin{lemma}\label{Krause_loc}{\rm \cite{holoc}}
Let $\C$ be a Grothendieck category, let $(\Q,\S)$ be a localization of $\C$ and denote by $\loc=\S\circ\Q:\C\to \C$ the localization functor. For all $X\in \C$ there is a natural isomorphism $\loc(X)\cong \loc(\loc(X))$. 
\end{lemma}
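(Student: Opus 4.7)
The plan is to deduce the idempotency $\loc \cong \loc \circ \loc$ from the basic fact that in an adjunction $\Q \dashv \S$ in which the right adjoint $\S$ is fully faithful, the counit $\epsilon \colon \Q \S \Rightarrow \id_\D$ is a natural isomorphism. This is a classical piece of category theory, so I would begin by recalling (or briefly verifying) it: given $D \in \D$, fully faithfulness of $\S$ means that the map
\[
\hom_\D(D', D) \longrightarrow \hom_\C(\S D', \S D)
\]
is bijective for every $D'$; combined with the adjunction isomorphism $\hom_\C(\S D', \S D) \cong \hom_\D(\Q \S D', D)$, one checks that $\epsilon_D \colon \Q \S D \to D$ represents the identity and is therefore an isomorphism, naturally in $D$.

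Once the counit isomorphism is in hand, the lemma is formal. Apply $\S$ on the left and $\Q$ on the right to $\epsilon$: for each $X \in \C$ we obtain a natural isomorphism
\[
\S\, \epsilon_{\Q(X)} \colon \S \Q \S \Q (X) \xrightarrow{\ \sim\ } \S \Q (X),
\]
that is, $\loc(\loc(X)) \cong \loc(X)$. Naturality in $X$ follows because $\epsilon$ is a natural transformation and both $\S$ and $\Q$ are functors, so the assignment $X \mapsto \S\,\epsilon_{\Q(X)}$ defines a natural isomorphism $\loc \circ \loc \Rightarrow \loc$.

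I do not expect any real obstacle here. The only slightly delicate point is that the claim that $\epsilon$ is iso uses fully faithfulness of $\S$, not of $\Q$, so one must be careful to use the correct half of the Definition of localization (namely $\S$ fully faithful, $\Q$ exact). The exactness of $\Q$ is not needed for this lemma; only the adjunction and fully faithfulness of $\S$ are used. Consequently no hypothesis on $\C$ being Grothendieck is actually invoked, and the proof is a two-line application of a standard adjunction lemma.
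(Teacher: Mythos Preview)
Your argument is correct and is the standard one: fully faithfulness of the right adjoint $\S$ forces the counit $\epsilon:\Q\S\Rightarrow \id_\D$ to be a natural isomorphism, and whiskering by $\S$ on the left and $\Q$ on the right yields $\loc\circ\loc\cong\loc$. You are also right that neither the exactness of $\Q$ nor the Grothendieck hypothesis is used.

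There is nothing to compare: the paper does not supply its own proof of this lemma but merely cites it from the literature (the reference marked \texttt{holoc}). Your proof is exactly the kind of two-line verification one would expect behind that citation.
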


One can encounter slightly different definitions of localization in other contexts, see for example \cite{Localization}.
We can now explain the connection between localizations and torsion theories. Indeed, starting with a localization $\Q:\C\rightleftarrows \D:\S$ and letting $\loc=\S\circ\Q$, 
$$\ker(\loc)=\{X\in \C : \loc(X)=0\}=\{X\in \C : \Q(X)=0\}=\ker (\Q)$$ 
is a torsion class (use the exactness of $\Q$ and the fact that it is a left adjoint). Hence, the localization $(\Q,\S)$  induces a torsion theory $(\ker(\Q),\ker(\Q)^{\perp})$.

\medskip
On the other hand, one can construct a localization out of a torsion theory. 
\begin{definition}
Let $\C$ be a Grotheindieck category and $\tau=(\T,\F)\in \tors(\C)$. An object $X\in \C$ is {\em $\tau$-local} if $X\in\F$ and $E(X)/X\in \F$. 
The full subcategory of $\C$ of all the $\tau$-local objects is denoted by $\C/\T$. 
\end{definition}

The definition of the localization induced by a torsion theory depends on the following lemma.

\begin{lemma}{\rm \cite{Gabriel}}
Let $\C$ be a Grotheindieck category and let $\tau=(\T,\F)\in \tors(\C)$. Then, the canonical inclusion $\C/\T\to \C$ has a left adjoint functor which is exact.
\end{lemma}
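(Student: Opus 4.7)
The plan is to build the left adjoint $L_\tau\colon\C\to\C/\T$ pointwise, via a two-step ``$\tau$-completion'' of the torsion-free quotient, then establish the universal property by hand, and finally reduce exactness to preservation of monomorphisms.

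For each $X\in\C$ I would set $\bar X:=X/\t_\tau(X)\in\F$, observe that hereditariness of $\tau$ forces $E(\bar X)\in\F$, and define $L_\tau(X)$ to be the unique subobject of $E(\bar X)$ containing $\bar X$ with $L_\tau(X)/\bar X=\t_\tau(E(\bar X)/\bar X)$, equipped with the structural map $\eta_X\colon X\twoheadrightarrow\bar X\hookrightarrow L_\tau(X)$. Then $L_\tau(X)$ is torsion-free (sub of $E(\bar X)\in\F$), essential in $E(\bar X)$ since it contains the essential $\bar X$, hence $E(L_\tau(X))=E(\bar X)$, and the quotient $E(L_\tau(X))/L_\tau(X)=(E(\bar X)/\bar X)/\t_\tau(E(\bar X)/\bar X)$ is torsion-free by construction, so $L_\tau(X)\in\C/\T$.

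The universal property is verified as follows: given $Y\in\C/\T$ and $f\colon X\to Y$, the morphism $f$ kills $\t_\tau(X)$ (since $Y\in\F$), hence factors through $\bar f\colon \bar X\to Y$; injectivity of $E(Y)$ extends $\bar f$ to $\hat f\colon E(\bar X)\to E(Y)$. The composite $L_\tau(X)\hookrightarrow E(\bar X)\to E(Y)\twoheadrightarrow E(Y)/Y$ vanishes on $\bar X$, so factors through the torsion object $L_\tau(X)/\bar X$ into the torsion-free $E(Y)/Y$ ($Y$ being $\tau$-local), and is therefore zero; hence $\hat f$ restricts to the desired map $L_\tau(X)\to Y$. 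Uniqueness uses the same torsion/torsion-free argument, applied to the difference of two extensions. This exhibits $L_\tau$ as left adjoint to the inclusion $\C/\T\hookrightarrow\C$ and simultaneously fixes its action on morphisms.

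For exactness, as a left adjoint $L_\tau$ preserves colimits and is right exact, so in an abelian category it suffices to check preservation of monomorphisms. Given a mono $\phi\colon X\hookrightarrow X'$, the induced $\bar\phi\colon\bar X\to\bar{X'}$ is mono because $\t_\tau(X)=X\cap\t_\tau(X')$; extending $\bar X\hookrightarrow\bar{X'}\hookrightarrow E(\bar{X'})$ along $\bar X\hookrightarrow E(\bar X)$ produces $\hat\phi\colon E(\bar X)\to E(\bar{X'})$, which is itself mono since its kernel would meet the essential subobject $\bar X$ trivially. Restricting $\hat\phi$ to $L_\tau(X)$ yields a mono into $E(\bar{X'})$, whose image lies in $L_\tau(X')$ because $L_\tau(X)/\bar X$ is torsion and must land inside the maximal torsion subobject $\t_\tau(E(\bar{X'})/\bar{X'})=L_\tau(X')/\bar{X'}$. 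The principal obstacle is bookkeeping rather than conceptual: keeping track of which maps are monic, which quotients are torsion and which torsion-free, and verifying naturality of $\eta$. The two crucial invocations of hereditariness --- first placing $E(\bar X)$ in $\F$, then lifting monomorphisms to injective envelopes --- are precisely where the argument would collapse for a torsion class that is only Serre-closed.
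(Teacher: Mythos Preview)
The paper does not supply a proof of this lemma; it simply cites Gabriel. Your argument is the standard one and is correct, and in fact your explicit construction of $L_\tau(X)$ is exactly the formula the paper records immediately afterwards in Proposition~\ref{loc_esp}(4), so your approach is fully aligned with the paper's treatment.

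One small remark on presentation: in the exactness step you should make explicit that the map $L_\tau(X)\to L_\tau(X')$ obtained by restricting $\hat\phi$ really is $L_\tau(\phi)$ (the map dictated by the adjunction). This follows from the uniqueness clause of the universal property you already proved, since the restricted map satisfies $\hat\phi\restriction_{L_\tau(X)}\circ\eta_X=\eta_{X'}\circ\phi$, but as written the reader has to supply this identification. Also, your second ``crucial invocation of hereditariness'' is really the equality $\t_\tau(X)=X\cap\t_\tau(X')$, which uses closure of $\T$ under subobjects; the lifting of monomorphisms to injective envelopes itself only uses injectivity and essentiality.
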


\begin{definition}
Let $\C$ be a Grotheindieck category and let $\tau=(\T,\F)\in \tors(\C)$. The inclusion  \mbox{$\S_\tau:\C/\T\to \C$} is called {\em $\tau$-section functor}, while its left adjoint functor $\Q_\tau:\C\to \C/\T$ is called {\em $\tau$-quotient functor}. The composition $\loc_\tau=\S_\tau\Q_\tau$ is called {\em $\tau$-localization functor}.\\
We say that $\tau$ is {\em exact} if $\loc_\tau$ (or, equivalently, $\S_\tau$) is an exact functor.
\end{definition}

It is possible to give an explicit description of the action of $\loc_\tau$ on objects. For that, we need to introduce another functor:
\begin{definition}
Let $\C$ be a Grotheindieck category and let $\tau=(\T,\F)\in \tors(\C)$. The {\em $\tau$-torsion functor} $\tor_\tau:\C\to \T$ is defined as follows:
\begin{enumerate}[\rm --]
\item $\tor_\tau(X)=\sum\,\{T\leq X:T\in \T\}$, for any object $X\in \C$;
\item $\tor_\tau(\phi):\tor_\tau(X)\to \tor_\tau(Y)$ is the restriction restriction of $\phi:X\to Y$, for any morphism $\phi$ in $\C$. 
\end{enumerate}
\end{definition}
It is an exercise to show that $\tor_\tau$ is a right adjoint to the inclusion $\T\to \C$. 

\begin{proposition}\label{loc_esp}
Let $\C$ be a Grothendieck category, let $\tau=(\T,\F)\in \tors(\C)$ and let $X,$ $Y\in \C$. The following statements hold true:
\begin{enumerate}[\rm (1)]
\item $\Q_\tau(T)=0$ and $\loc_\tau(T)=0$ for all $T\in \T$;
\item $\loc_\tau(X)\cong \loc_\tau(X/\tor_\tau(X))$;
\item if there is an exact sequence $0\to X\to L\to T\to 0$ with $L$ $\tau$-local and $T\in\T$, then $\loc_\tau(X)\cong L$; 
\item letting $X'=X/\tor_\tau(X)$, 
$$\loc_\tau(X)\cong\pi^{-1}(\tor_\tau(E(X')/X'))\,,$$
where $\pi:E(X')\to E(X')/X'$ is the canonical projection. 
\item $\hom_{\C/\C_{\tau}}(\Q_\tau (X),\Q_\tau (Y))\cong\hom_{\C}(X,Y)$, provided both $X$ and $Y$ are $\tau$-local.
\end{enumerate}
\end{proposition}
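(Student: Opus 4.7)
The plan is to prove the five parts in sequence, leaning on two structural properties of the adjunction $\Q_\tau \dashv \S_\tau$: the exactness of $\Q_\tau$ and the full faithfulness of $\S_\tau$. The latter is equivalent to the unit map $\eta_L \colon L \to \loc_\tau L$ being an isomorphism for every $\tau$-local $L$, a standard property of reflective subcategories which I will use repeatedly in the form $\loc_\tau L \cong L$.

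For (1), given $T \in \T$ and any $\tau$-local $Y$, the adjunction gives
\[
\hom_{\C/\T}(\Q_\tau T, Y) \cong \hom_\C(T, \S_\tau Y) = \hom_\C(T, Y) = 0,
\]
since $Y \in \F = \T^\perp$; by Yoneda this forces $\Q_\tau T = 0$, so $\loc_\tau T = 0$ as well. Part (2) follows by applying the exact functor $\Q_\tau$ to the sequence $0 \to \tor_\tau X \to X \to X/\tor_\tau X \to 0$ and invoking (1) on the leftmost term to conclude $\Q_\tau X \cong \Q_\tau(X/\tor_\tau X)$. For (3), applying $\Q_\tau$ to $0 \to X \to L \to T \to 0$ yields $\Q_\tau X \cong \Q_\tau L$, and then $\loc_\tau X \cong \loc_\tau L \cong L$ by the reflectivity remark.

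The main work is (4). Setting $X' = X/\tor_\tau X$ and $L = \pi^{-1}(\tor_\tau(E(X')/X'))$ produces a short exact sequence
\[
0 \to X' \to L \to \tor_\tau(E(X')/X') \to 0
\]
whose rightmost term is in $\T$, so by (3) it suffices to check that $L$ is $\tau$-local. Since $\F$ is closed under injective envelopes and $X' \in \F$, we have $E(X') \in \F$, hence $L \leq E(X') \in \F$. For the second condition, note that $X' \leq L$ is essential (being contained in the essential extension $X' \leq E(X')$), so any injective envelope of $L$ is also an injective envelope of $X'$; since $E(L) \subseteq E(X')$ is a direct summand containing the essential subobject $X'$, one concludes $E(L) = E(X')$. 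Therefore
\[
E(L)/L = E(X')/L \cong (E(X')/X') / \tor_\tau(E(X')/X')
\]
is $\tau$-torsion free by the very definition of $\tor_\tau$. Thus $L$ is $\tau$-local, and combining (2) with (3) yields $\loc_\tau X \cong \loc_\tau X' \cong L$.

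Finally, (5) follows immediately: for $Y$ $\tau$-local the adjunction and the reflectivity remark give
\[
\hom_{\C/\T}(\Q_\tau X, \Q_\tau Y) \cong \hom_\C(X, \loc_\tau Y) \cong \hom_\C(X, Y).
\]
The only genuinely non-formal step in the whole argument is the identification $E(L) = E(X')$ in part (4); once that is in hand, the explicit description of $\loc_\tau X$ as a pullback of the torsion part of $E(X')/X'$ is a direct consequence of (1)--(3).
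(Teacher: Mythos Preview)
Your proof is correct and follows essentially the same approach as the paper: the adjunction/reflectivity argument for (1), (3), and (5), exactness of $\Q_\tau$ for (2), and the verification that $\pi^{-1}(\tor_\tau(E(X')/X'))$ is $\tau$-local for (4). The only difference is that you spell out the $\tau$-locality check in (4) explicitly (identifying $E(L)=E(X')$ via essentiality), whereas the paper simply asserts this follows from the definition.
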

\begin{proof}
(1) Suppose, looking for a contradiction, that $\Q_\tau(T)\neq0$. We get $\hom_{\C/\T}(\Q_\tau(T),\Q_\tau(T))\neq 0$ but then, by the adjointeness of $\Q_\tau$ and $\S_\tau$, $\hom_{\C}(T,\loc_\tau(T))\neq 0$ and, by definition, $\loc_\tau(T)$ is $\tau$-torsion free and not trivial, which is a contradiction.\\
\noindent (2) By part (1) and the exactness of $\Q_\tau$, if we apply $\Q_\tau$ to the exact sequence $0\to \tor_\tau(X)\to X\to X/\tor_\tau(X)\to 0$, we get $\Q_\tau(X)\cong \Q_\tau(X/\tor_\tau(X))$. Now apply $\S_\tau$ to this isomorphism.\\
\noindent (3) Apply $\loc_\tau$ to the exact sequence $0\to X\to L\to T\to 0$. By part (1) and left exactness, $\loc_\tau(L)\cong \loc_\tau(X)$. Now apply Lemma \ref{Krause_loc}.\\
\noindent (4) By part (2), $\loc_\tau(X)\cong \loc_\tau(X')$. Furthermore, it follows  from the definition that $\pi^{-1}(\tor_\tau(E(X')/X'))$ is $\tau$-local, so, by part (3), $\loc_\tau(X)\cong\pi^{-1}(\tor_\tau(E(X')/X'))$.\\
\noindent (5) By adjointness,  $\hom_{\C/\C_{\tau}}(\Q_\tau (X),\Q_\tau (Y))\cong\hom_{\C}( X,\loc_\tau (Y))\cong \hom_{\C}(X,Y)$.
\end{proof}

%
%\begin{corollary}\label{loc_hom}
%Let $\C$ be a Grothendieck category, let $\tau=(\T,\F)\in \tors(\C)$ and let $X$ and $Y$ be two $\tau$-local objects in $\C$. Then,
%\begin{enumerate}[\rm (1)]
%\item $Y\cong\loc_\tau(Y)$;
%\end{enumerate}
%\end{corollary}
%\begin{proof}
%Part (1) follows directly from Proposition \ref{loc_esp} and, 
%\end{proof}

\begin{corollary}\label{prod}
Let $\C$ be a Grothendieck category and let $\tau=(\T,\F)\in\tors(\C)$ be a torsion theory. Consider a family $\{X_i:i\in I\}\subseteq \C/\T$, then
\begin{equation*}\prod_{i\in I}X_i\cong \Q_{\tau}\left(\prod_{i\in I}\S_\tau (X_i)\right)\, .\end{equation*}
\end{corollary}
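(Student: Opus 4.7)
The plan is to exploit the fact that $(\Q_\tau, \S_\tau)$ is an adjunction in which the right adjoint $\S_\tau$ is fully faithful. Two standard consequences do the whole job: right adjoints preserve limits, and full faithfulness of $\S_\tau$ forces the counit $\Q_\tau \S_\tau \Rightarrow \mathrm{id}_{\C/\T}$ to be a natural isomorphism.

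First I would note that since $\C/\T$ is Grothendieck (as recalled in Section~\ref{tt_sec}), the product $P := \prod_{i\in I} X_i$ exists in $\C/\T$. Since $\S_\tau$ is a right adjoint, it preserves arbitrary products, so in $\C$ there is a natural isomorphism
\begin{equation*}
\S_\tau(P) \;\cong\; \prod_{i\in I} \S_\tau(X_i).
\end{equation*}

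Next, apply the exact functor $\Q_\tau$ to both sides. Because $\S_\tau$ is fully faithful, the counit $\Q_\tau\S_\tau \to \mathrm{id}_{\C/\T}$ is an isomorphism, hence $\Q_\tau \S_\tau(P) \cong P$. Combining this with the previous display yields
\begin{equation*}
\prod_{i\in I} X_i \;=\; P \;\cong\; \Q_\tau \S_\tau(P) \;\cong\; \Q_\tau\!\left( \prod_{i\in I} \S_\tau(X_i) \right),
\end{equation*}
which is the desired isomorphism.

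There is really no obstacle: the only things being used are the adjunction $\Q_\tau \dashv \S_\tau$ (already set up in the previous definitions), preservation of limits by right adjoints, and the triangle-identity consequence that $\Q_\tau\S_\tau \cong \mathrm{id}$ when $\S_\tau$ is fully faithful. Alternatively, and equally cleanly, one could verify the universal property directly: for any $Y \in \C/\T$, Proposition~\ref{loc_esp}(5) and the adjunction give
$\hom_{\C/\T}(Y, \Q_\tau(\prod_i \S_\tau X_i)) \cong \hom_\C(\S_\tau Y, \prod_i \S_\tau X_i) \cong \prod_i \hom_\C(\S_\tau Y, \S_\tau X_i) \cong \prod_i \hom_{\C/\T}(Y, X_i),$
so $\Q_\tau(\prod_i \S_\tau X_i)$ represents the product of the $X_i$ in $\C/\T$.
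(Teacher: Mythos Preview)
Your main argument is correct and is exactly the paper's proof, only spelled out in more detail: the paper simply says that $\S_\tau$, being a right adjoint, commutes with limits, and then applies $\Q_\tau$ (implicitly using $\Q_\tau\S_\tau\cong\mathrm{id}_{\C/\T}$).

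A small caution about your alternative verification via the universal property: the first isomorphism
\[
\hom_{\C/\T}\bigl(Y,\Q_\tau(\textstyle\prod_i\S_\tau X_i)\bigr)\cong\hom_\C\bigl(\S_\tau Y,\textstyle\prod_i\S_\tau X_i\bigr)
\]
is not an instance of the adjunction, nor of Proposition~\ref{loc_esp}(5), because $\prod_i\S_\tau X_i$ is $\tau$-torsion free but need not be $\tau$-local. What the adjunction (or full faithfulness of $\S_\tau$) actually gives is $\hom_{\C/\T}(Y,\Q_\tau A)\cong\hom_\C(\S_\tau Y,\loc_\tau A)$, and passing from $\loc_\tau A$ back to $A$ requires $\hom_\C(\S_\tau Y,T)=0$ for the torsion cokernel $T$ of $A\hookrightarrow\loc_\tau A$, which is not automatic for a $\tau$-local $\S_\tau Y$. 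So the alternative route, as written, has a gap; your primary argument is the clean one.
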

\begin{proof} 
Being right adjoint, $\S_\tau$ commutes with limits, thus $\S_\tau\left(\prod_{i\in I}X_i\right)\cong\prod_{i\in I}\S_\tau (X_i)$. Apply $\Q_{\tau}$ to conclude.
\end{proof}

\subsection{Gabriel categories and Gabriel spectrum}\label{gab_cat_subs}
Let $\C$ be a Grothendieck category. The {\em Gabriel filtration} of $\C$ is a transfinite chain 
$$\{0\}=\C_{-1}\subseteq \C_0\subseteq \dots \subseteq \C_\alpha\subseteq \dots$$
of torsion classes defined as follows:
\begin{enumerate}[--]
\item $\C_{-1}=\{0\}$;
\item suppose that $\alpha$ is an ordinal for which $\C_{\alpha}$ has already been defined, an object $C\in \C$ is said to be {\em $\alpha$-cocritical} if  $C'\notin \C_\alpha$ and $C/C'\in\C_\alpha$, for any non-trivial sub-object $C'\leq C$. We let $\C_{\alpha+1}$ be the smallest torsion class containing $\C_{{\alpha}}$ and all the ${\alpha}$-cocritical objects;
\item if $\lambda$ is a limit ordinal, we let $\C_\lambda$ be the smallest hereditary torsion class containing $\bigcup_{\alpha<\lambda}\C_\alpha$.
\end{enumerate}
\begin{definition}
A Grothendieck category $\C$ is said to be a {\em Gabriel category} if $\C=\bigcup_{\alpha}\C_\alpha$.
\end{definition}
Let $\C$ be a Gabriel category and let $\tau=(\T,\F)$ be a torsion theory. One can show that $\C/\T$ is a Gabriel category as well (showing by induction that $\Q_\tau(\C_\alpha) \subseteq (\C/\T)_{\alpha}$ for all $\alpha$).

\smallskip
For any ordinal $\alpha$, we let $\tau_\alpha=(\C_\alpha, \C_\alpha^{\perp})$; in what follows, we write $\alpha$-torsion (resp., torsion free, local, ...) instead of $\tau_\alpha$-torsion (resp., torsion free, local, ...). Furthermore, we let $\tor_\alpha:\C\to \C_{\alpha}$, $\S_\alpha:\C/\C_{\alpha}\to \C$ and $\Q_\alpha:\C\to\C/\C_{\alpha}$ be respectively the $\alpha$-torsion, the $\alpha$-section and the $\alpha$-quotient functors. Abusing notation, we use the same symbols for the functors $\tor_\alpha:\C_{\alpha+1}\to \C_{\alpha}$ and $\Q_\alpha:\C_{\alpha+1}\to\C_{\alpha+1}/\C_{\alpha}$, induced by restriction.
\begin{definition}
An object $C$ in $\C$ is {\em cocritical} if it is $\alpha$-cocritical for some $\alpha$. The torsion theories that can be cogenerated by the injective envelope of a cocritical object are said to be {\em prime}.\\ 
In this paper, the symbol $\pi$ is always used to denote a prime torsion theory.
\end{definition}

The following lemma is folklore.

\begin{lemma}
Let $\C$ be a Gabriel category, let $C\in \C$ be an object and let $\alpha$ be an ordinal. The following are equivalent:
\begin{enumerate}[\rm (1)]
\item $C$ is $\alpha$-cocritical;
\item $C$ is $\alpha$-torsion free and $\Q_\alpha(C)$ is simple;
\item there exists a simple object $S\in \C/\C_{\alpha}$ such that $C$ embeds in $\S_\alpha(S)$.
\end{enumerate}
\end{lemma}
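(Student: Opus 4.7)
The plan is to prove the cyclic chain $(1)\Rightarrow(2)\Rightarrow(3)\Rightarrow(1)$, exploiting three standard features of the localization at $\tau_\alpha$: the exactness of $\Q_\alpha$, the isomorphism $\Q_\alpha\S_\alpha\cong\mathrm{id}$ coming from the full faithfulness of $\S_\alpha$, and the observation (implicit in Proposition \ref{loc_esp}) that the unit $C\to\loc_\alpha(C)=\S_\alpha\Q_\alpha(C)$ has kernel exactly $\tor_\alpha(C)$, hence is a monomorphism precisely when $C$ is $\alpha$-torsion-free. I tacitly assume $C\neq 0$ throughout, since otherwise (2) and (3) fail for the trivial reason that the zero object is not simple.

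For $(1)\Rightarrow(2)$, I would first note that $C$ is $\alpha$-torsion-free: applying the cocriticality condition to $C'=\tor_\alpha(C)$, the only way to avoid $C'\in\C_\alpha$ is to have $C'=0$. In particular $\Q_\alpha(C)\neq 0$. To see that $\Q_\alpha(C)$ is simple, I would pick any sub-object $Y\leq\Q_\alpha(C)$ and form $C'=C\cap\S_\alpha(Y)$ inside $\loc_\alpha(C)$, using the monomorphisms $C\hookrightarrow\loc_\alpha(C)$ and $\S_\alpha(Y)\hookrightarrow\S_\alpha\Q_\alpha(C)=\loc_\alpha(C)$. Applying the exact functor $\Q_\alpha$ and using $\Q_\alpha\S_\alpha\cong\mathrm{id}$ yields $\Q_\alpha(C')=Y$. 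If $C'=0$ then $Y=0$; otherwise by cocriticality $C/C'\in\C_\alpha$, so $\Q_\alpha(C')=\Q_\alpha(C)$ and $Y=\Q_\alpha(C)$. Hence $\Q_\alpha(C)$ has no proper non-zero sub-objects.

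For $(2)\Rightarrow(3)$, I would simply take $S=\Q_\alpha(C)$, which is simple by hypothesis; the unit $C\to\S_\alpha(S)$ is a monomorphism because $C$ is $\alpha$-torsion-free. For $(3)\Rightarrow(1)$, note that $\S_\alpha(S)$ is $\alpha$-local, hence $\alpha$-torsion-free, so $C$ and every non-trivial sub-object $C'\leq C$ are $\alpha$-torsion-free; in particular $C'\notin\C_\alpha$. Applying the exact functor $\Q_\alpha$ to the composition $C'\hookrightarrow C\hookrightarrow\S_\alpha(S)$ yields monomorphisms $\Q_\alpha(C')\hookrightarrow\Q_\alpha(C)\hookrightarrow\Q_\alpha\S_\alpha(S)=S$, and both $\Q_\alpha(C')$ and $\Q_\alpha(C)$ are non-zero. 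Simplicity of $S$ forces both to equal $S$, whence $\Q_\alpha(C/C')=0$ and $C/C'\in\C_\alpha$.

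The main technical obstacle is the sub-object correspondence used in $(1)\Rightarrow(2)$: one must verify that every sub-object $Y$ of $\Q_\alpha(C)$ is of the form $\Q_\alpha(C')$ for some $C'\leq C$. The identification $C'=C\cap\S_\alpha(Y)$ is natural, but checking that $\Q_\alpha(C\cap\S_\alpha(Y))=Y$ relies on the commutation of the exact functor $\Q_\alpha$ with intersections of sub-objects, together with the triangle identity $\Q_\alpha\S_\alpha\cong\mathrm{id}$. Once this lemma of Gabriel localization is in place, the remaining implications are essentially formal consequences of the adjunction $(\Q_\alpha,\S_\alpha)$ and the definitions packaged in the Gabriel filtration.
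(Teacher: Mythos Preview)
The paper does not prove this lemma; it merely states that ``the following lemma is folklore'' and moves on. So there is no original argument to compare against. Your cyclic proof $(1)\Rightarrow(2)\Rightarrow(3)\Rightarrow(1)$ is correct and is the standard argument one would expect for such a folklore statement. In particular, your handling of the sub-object lifting in $(1)\Rightarrow(2)$ via $C'=C\cap\S_\alpha(Y)$ inside $\loc_\alpha(C)$ is the right move: exactness of $\Q_\alpha$ guarantees it preserves the pullback defining the intersection, and the triangle identity shows that $\Q_\alpha$ applied to the unit $C\to\loc_\alpha(C)$ is an isomorphism, so the computation $\Q_\alpha(C')=Y$ goes through. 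Your remark about the degenerate case $C=0$ is also appropriate, since the paper's definition of $\alpha$-cocritical is vacuously satisfied by the zero object while (2) is not.
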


The next lemma is well-know but we were not able to find this exact statement in the literature. Recall that a Grothendieck category $\mathfrak D$ is said to be semi-Artinian if $\Gdim(\mathfrak D)=0$, equivalently,  every object in $\mathfrak D$ has a simple sub-object. 

\begin{lemma}\label{indec_prime}
Let $\C$ be a Gabriel category and let $E$ and $E'\in \C$ be injective objects; the following statements hold true:
\begin{enumerate}[\rm (1)]
\item $E$ is indecomposable if and only if there exists a cocritical object $C$ such that $E\cong E(C)$;
\item if $E$ and $E'$ are indecomposables and cogenerate the same torsion theory, then $E\cong E'$.
\end{enumerate}
\end{lemma}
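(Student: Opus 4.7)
My plan is to prove the two parts separately, with part (1) providing the key structural fact that indecomposable injectives are exactly injective envelopes of cocritical objects, and then part (2) following by a direct argument using the characterization from the previous lemma (cocritical $\Leftrightarrow$ $\alpha$-torsion-free with simple $\alpha$-quotient).

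For part (1), the direction $(\Leftarrow)$ reduces to showing that any cocritical object $C$ is uniform, after which indecomposability of $E(C)$ is automatic. If $C$ is $\alpha$-cocritical and $A,B\leq C$ are nonzero with $A\cap B=0$, then the diagonal map $C\to C/A\oplus C/B$ is a monomorphism; but $C/A$ and $C/B$ lie in $\C_\alpha$ (by cocriticality) and $\C_\alpha$ is closed under finite products, so $C\in\C_\alpha$, contradicting $\alpha$-torsion-freeness. For the direction $(\Rightarrow)$, I will use the standard fact that in a Gabriel category every nonzero object $E$ has a cocritical subobject $C$: since $\C=\bigcup_\alpha\C_\alpha$ and $\C_{-1}=0$, one can pick the smallest $\alpha$ for which $E$ has a nonzero subobject in $\C_{\alpha+1}$, and then construct $C$ inside such a subobject as an $\alpha$-torsion-free object whose proper quotients land in $\C_\alpha$. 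Then $E(C)$ is an indecomposable injective subobject of $E$, hence a direct summand, and by indecomposability of $E$ we get $E\cong E(C)$.

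For part (2), by (1) we may write $E\cong E(C)$ and $E'\cong E(C')$ with $C$ being $\alpha$-cocritical and $C'$ being $\alpha'$-cocritical. By symmetry, assume $\alpha\leq\alpha'$, so $\C_\alpha\subseteq\C_{\alpha'}$ and consequently $C'$ is $\alpha$-torsion-free; since torsion-free classes are closed under injective envelopes, $E'$ is also $\alpha$-torsion-free. Now $\hom_\C(C,E)\neq 0$ (via the inclusion), so $C\notin{}^{\perp}\{E\}={}^{\perp}\{E'\}$, hence there is some nonzero $f\colon C\to E'$. If $\ker f\neq 0$ then $C/\ker f$ is $\alpha$-torsion by cocriticality, yet it embeds in the $\alpha$-torsion-free object $E'$; this forces $C/\ker f=0$ and hence $f=0$, a contradiction. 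So $f$ is a monomorphism $C\hookrightarrow E'=E(C')$, and essentiality gives a nonzero subobject $D\leq C$ with $f(D)\leq C'$. Since both $C$ and $C'$ are uniform, $E(D)=E(C)\cong E$ and simultaneously $E(f(D))\leq E(C')\cong E'$; the indecomposable injective $E(D)$ is then a direct summand of the indecomposable injective $E'$, giving $E\cong E'$.

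The main obstacle I foresee is the existence of a cocritical subobject of any nonzero object in a Gabriel category, which is the technical heart of part (1) $(\Rightarrow)$ and must be argued by transfinite selection along the Gabriel filtration, with care at limit ordinals (using that $\C_\lambda$ is the smallest hereditary torsion class containing the union, so any object in $\C_\lambda$ already sits in a smaller $\C_\beta$ after passing to a suitable subobject). Everything else in part (2) is a clean combination of the characterization of cocritical objects in the previous lemma together with the observation that torsion-free classes of hereditary torsion theories are closed under injective envelopes.
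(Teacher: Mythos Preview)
Your proof is correct and follows essentially the same strategy as the paper's (sketched) proof: in part~(1) both arguments identify a cocritical subobject of a given indecomposable injective and, conversely, show that cocritical objects have indecomposable injective envelopes; in part~(2) both produce a nonzero map $C\to E'$ and argue it is a monomorphism, whence $E'\cong E(C)=E$.

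A couple of minor differences are worth pointing out. For $(\Leftarrow)$ in part~(1), the paper passes to the quotient $\C/\C_\alpha$ to reduce to the case of a simple object, whereas you give a direct and more elementary argument that an $\alpha$-cocritical object is uniform via the diagonal embedding $C\hookrightarrow C/A\oplus C/B$; your route avoids invoking the localization machinery. In part~(2), the paper simply asserts that any nonzero $\phi\colon C\to E'$ ``is necessarily a monomorphism'' without further comment; your WLOG assumption $\alpha\leq\alpha'$ (so that $E'=E(C')$ is $\alpha$-torsion free) is exactly what makes this step transparent, and in this sense your write-up is more complete than the paper's sketch. Your final step through an essential subobject $D\leq C$ with $f(D)\leq C'$ is correct but slightly longer than needed: once $f$ is mono, $f(C)\cong C$ is a nonzero subobject of the uniform object $E'$, so $E'\cong E(f(C))\cong E(C)=E$ directly.
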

\begin{proof}[Sketch of the proof]
(1) Suppose $E=E(C)$, where $C$ is $\alpha$-cocritical for some ordinal $\alpha$. If $\alpha=-1$ then $C$ is simple and so $E=E(C)$ is indecomposable.  If $\alpha>0$, proceed in the category $\C/\C_\alpha$ as in the case when $\alpha=-1$ to show that $E$ is indecomposable.

On the other hand, suppose $E$ is indecomposable and let $\alpha$ be the smallest ordinal such that $\tor_{\alpha+1}(E)\neq 0$. If $\alpha=-1$, then $E$ has a simple socle $S$, so $E\cong E(S)$. If $\alpha>-1$, then apply the same argument to $\Q_\alpha(E)$ and deduce that $E=E(\S_\alpha(C))$ for some simple object $C$ in $\C/\C_\alpha$.

\noindent
(2) Let $\tau=(\T,\F)$ and $\tau'=(\T',\F')$ be the torsion theories cogenerated by $E$ and $E'$ respectively and suppose $\tau=\tau'$. By part (1), $E=E(C)$ for some cocritical object $C$, furthermore $C\leq E\in \F=\F'$, thus $\hom_\C(C,E')\neq 0$. So, let $\phi:C\to E'$ be a non-trivial map and notice that it is necessarily a monomorphism. Thus, $E'\cong E(\phi(C))\cong E(C)=E$. 
\end{proof}

\begin{definition}
Given a Gabriel category $\C$, the {\em $\alpha$-Gabriel spectrum} $\Sp^{\alpha}(\C)$ is the family of isomorphism classes of injective envelopes of $\alpha$-cocritical objects. The {\em Gabriel spectrum} $\Sp(\C)$ of $\C$ is the family of isomorphism classes of indecomposable injective objects in $\C$. \\
Given a subset $S\subseteq\Sp(\C)$, we say that $S$ is {\em generalization closed} (resp., {\em specialization closed}) if it contains all the prime torsion theories that are generalizations (specializations) of its members. 
\end{definition}
By Lemma \ref{indec_prime}, in a Gabriel category $\C$,
$$\Sp(\C)=\bigcup_{\alpha}\Sp^\alpha(\C)\,.$$ 
Furthermore, $\Sp(\C)$ corresponds bijectively to the family of prime torsion theories. We identify these two sets and we write $\pi\in\Sp(\C)$ (or $\pi\in \Sp^\alpha(\C)$) to mean that $\pi$ is a prime torsion theory. Furthermore, we let $E(\pi)$ be a representative of the isomorphism class of indecomposable injectives which cogenerate $\pi$.

\begin{lemma}\label{stab_ort}
Let $\C$ be a Gabriel category, let $\pi,\pi'\in\Sp(\C)$ and consider the following conditions:
\begin{enumerate}[\rm (1)]
\item $\pi\preceq \pi'$;
\item $\hom_\C(E(\pi'),E(\pi))\neq 0$.
\end{enumerate}
Then, (1) implies (2). If $\pi$ is stable, also the converse holds.
\end{lemma}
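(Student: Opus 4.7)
The plan is to recast both conditions as statements about membership of $E(\pi')$ in the torsion or torsion-free class of $\pi$, and then extract the two implications from the disjointness $\T_\pi \cap \F_\pi = \{0\}$ together with the direct-sum splitting provided by stability.

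First, I would establish two reformulations. Since $\pi'$ is by definition cogenerated by $E(\pi')$, we have $\T_{\pi'} = {}^{\perp}\{E(\pi')\}$, so that $\pi \preceq \pi'$ (equivalently $\T_\pi \subseteq \T_{\pi'}$) unravels to the statement $\hom_\C(T, E(\pi')) = 0$ for all $T \in \T_\pi$, i.e., $E(\pi') \in \T_\pi^{\perp} = \F_\pi$. Dually, $\T_\pi = {}^{\perp}\{E(\pi)\}$ gives that $\hom_\C(E(\pi'), E(\pi)) \neq 0$ is equivalent to $E(\pi') \notin \T_\pi$. With these reformulations in hand, (1)$\Rightarrow$(2) is immediate: if $\pi \preceq \pi'$ then $E(\pi') \in \F_\pi$, and since $E(\pi')$ is the injective envelope of a cocritical, hence nonzero, object, $E(\pi') \neq 0$; but $\F_\pi \cap \T_\pi = \{0\}$, so $E(\pi') \notin \T_\pi$, that is $\hom_\C(E(\pi'), E(\pi)) \neq 0$.

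For the converse under the stability hypothesis on $\pi$, I would apply Lemma~\ref{split_inj_lemma} to the injective object $E(\pi')$, obtaining the decomposition $E(\pi') \cong \tor_\pi(E(\pi')) \oplus E(\pi')/\tor_\pi(E(\pi'))$. Because $E(\pi')$ is indecomposable by Lemma~\ref{indec_prime}(1), precisely one summand vanishes. The hypothesis $\hom_\C(E(\pi'), E(\pi)) \neq 0$ rules out the possibility $E(\pi') = \tor_\pi(E(\pi')) \in \T_\pi$, so $\tor_\pi(E(\pi')) = 0$, i.e., $E(\pi') \in \F_\pi$, which by the first reformulation is $\pi \preceq \pi'$. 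The whole argument is short and largely formal; the only substantive input is the direct-sum splitting guaranteed by stability, and the one point requiring care is the translation of $\T_\pi \subseteq \T_{\pi'}$ into $E(\pi') \in \F_\pi$, which depends on $\pi'$ being cogenerated precisely by $E(\pi')$.
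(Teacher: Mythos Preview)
Your proof is correct and follows essentially the same route as the paper's: both arguments pivot on the equivalence $\pi\preceq\pi'\Leftrightarrow E(\pi')\in\F_\pi$, deduce (1)$\Rightarrow$(2) from $\F_\pi\cap\T_\pi=\{0\}$, and use the stability splitting of Lemma~\ref{split_inj_lemma} together with indecomposability of $E(\pi')$ for the converse. You are simply more explicit than the paper, which compresses these steps into two sentences and leaves the appeal to indecomposability implicit.
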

\begin{proof}
By definition, $\pi\preceq \pi'$ if and only if any $\pi'$-torsion free object is $\pi$-torsion free. In this case, $E(\pi')$ is $\pi$-torsion free, thus $\hom_\C(E(\pi'),E(\pi))\neq 0$. On the other hand, if $\hom_\C(E(\pi'),E(\pi))\neq 0$ and $\pi$ is stable, then $E(\pi')$ is not $\pi$-torsion, thus it is $\tau$-torsion free (see Lemma \ref{split_inj_lemma}) and so $\pi\preceq \pi'$.
\end{proof}

\begin{theorem}\label{class_gabriel}
Let $\C$ be a Grothendieck category and let $\tau=(\T,\F)$ be a torsion theory. Define the following subsets of $\Sp(\C)$:
\begin{enumerate}[\rm --]
\item $S(\tau)=\{\pi\in \Sp(\C): \tor_\tau(E(\pi))\neq 0\}$;
\item $G(\tau)=\{\pi\in \Sp(\C): \tor_\tau(E(\pi))=0\}$.
\end{enumerate}
Then, $S(\tau)\cup G(\tau)=\Sp(\C)$ and this is a disjoint union. Furthermore, given $\tau'\in\tors(\C)$, $\tau=\tau'$ if and only if $G(\tau)=G(\tau')$ if and only if $S(\tau)=S(\tau')$.
\\
If $\C$ is stable, then $S(\tau)$ and $G(\tau)$ are respectively specialization and generalization closed. Furthermore, any specialization (resp., generalization) closed subset of $\Sp(\C)$ is of the form $S(\tau)$ (resp., $G(\tau)$) for some $\tau\in \tors(\C)$ and $S(-)$ (resp., $G(-)$) induces a bijection between $\tors(\C)$ and the set of specialization (resp., generalization) closed subsets of $\Sp(\C)$.  
\end{theorem}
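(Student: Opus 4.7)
The plan is to establish the four assertions of the theorem in order. The decomposition $\Sp(\C) = S(\tau) \sqcup G(\tau)$ as a disjoint union is immediate from the mutually exclusive definitions, and since $G(\tau)$ and $S(\tau)$ are complements in $\Sp(\C)$, the equivalence $G(\tau) = G(\tau') \Leftrightarrow S(\tau) = S(\tau')$ is automatic. The implication $\tau = \tau' \Rightarrow G(\tau) = G(\tau')$ is definitional, so the only nontrivial content of the first ``Furthermore'' is the injectivity of $\tau \mapsto G(\tau)$.

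For injectivity, I would proceed by contradiction: assume $G(\tau) = G(\tau')$ and, without loss of generality, that $\F' \not\subseteq \F$; pick $X \in \F' \setminus \F$, so that $\tor_\tau(X)$ is a nonzero object of $\T \cap \F'$ (using that $\F'$ is closed under subobjects). Invoking the folklore fact that in a Gabriel category (tacitly in force throughout this discussion of $\Sp(\C)$) every nonzero object contains a cocritical subobject, I choose a cocritical $C \leq \tor_\tau(X)$; then the prime $\pi$ cogenerated by $E(C)$ lies in $\Sp(\C)$ by Lemma \ref{indec_prime}(1). Since $C$ is a nonzero $\tau$-torsion subobject of $E(C)$, we have $\pi \in S(\tau) = S(\tau')$, hence $\tor_{\tau'}(E(C)) \neq 0$. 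However, $C \leq X \in \F'$ forces $\tor_{\tau'}(E(C)) \cap C \in \T' \cap \F' = 0$, and essentiality of $C$ in $E(C)$ then yields $\tor_{\tau'}(E(C)) = 0$, a contradiction.

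Assuming now that $\C$ is stable, the closure properties follow quickly. For $\pi \in S(\tau)$, Lemma \ref{split_inj_lemma} together with the indecomposability of $E(\pi)$ upgrades $\tor_\tau(E(\pi)) \neq 0$ to $E(\pi) \in \T$. For any $\pi' \preceq \pi$, Lemma \ref{stab_ort} produces a nonzero morphism $E(\pi) \to E(\pi')$ whose image is a nonzero quotient of $E(\pi) \in \T$ and hence torsion, giving $\pi' \in S(\tau)$. Therefore $S(\tau)$ is specialization closed, and $G(\tau)$ is generalization closed by complementation.

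For surjectivity, given a generalization closed $G \subseteq \Sp(\C)$, I let $\tau$ be the torsion theory cogenerated by $E = \prod_{\pi \in G} E(\pi)$, so that (cf.\ the sketched proof of Lemma \ref{cog}) $\F$ consists of the objects embedding into a product of copies of $E$. Each $E(\pi)$ with $\pi \in G$ is a direct summand of $E$ and hence lies in $\F$, so $G \subseteq G(\tau)$. Conversely, if $\pi \in G(\tau)$, stability and indecomposability again force $E(\pi) \in \F$, producing a nonzero map $E(\pi) \to E(\pi'')$ for some $\pi'' \in G$; Lemma \ref{stab_ort} then gives $\pi'' \preceq \pi$, and generalization closedness of $G$ puts $\pi \in G$. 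Combined with the injectivity already established, this yields the bijection, the specialization closed version being dual. The main obstacle I anticipate is the injectivity step: it depends on the (unstated in the excerpt) existence of cocritical subobjects in a Gabriel category and on a careful essentiality argument to turn the $\tau$-torsion witness inside $E(C)$ into a contradiction with $\F'$-membership, and it must be phrased without tacitly invoking stability, which is only assumed in the second half of the theorem.
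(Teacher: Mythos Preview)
Your argument is correct and, for the stable part of the theorem (closure properties of $S(\tau)$, $G(\tau)$ and surjectivity of $G(-)$), it follows essentially the same route as the paper: both use Lemma~\ref{split_inj_lemma} to upgrade $\tor_\tau(E(\pi))\neq 0$ to $E(\pi)\in\T$ under stability, and both use Lemma~\ref{stab_ort} to translate between the order on $\Sp(\C)$ and nonvanishing of $\hom$ between indecomposable injectives. Your construction of the torsion theory cogenerated by $\prod_{\pi\in G}E(\pi)$ is exactly the paper's $\underline\T={}^{\perp}\{E(\pi):\pi\in G\}$, and the verification $G(\underline\tau)=G$ proceeds the same way (you argue the inclusion $G(\tau)\subseteq G$ directly, the paper argues the complementary inclusion $\Sp(\C)\setminus G\subseteq S(\underline\tau)$; these are equivalent).

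The one substantive difference is the injectivity of $\tau\mapsto G(\tau)$. The paper does not prove this at all: it simply cites Gabriel's original paper \cite{Gabriel}. Your argument---produce a nonzero object in $\T\cap\F'$, extract a cocritical subobject $C$, and use essentiality of $C$ in $E(C)$ to contradict $\tor_{\tau'}(E(C))\neq 0$---is a clean and self-contained proof, and you are right that it needs $\C$ to be a Gabriel category (so that cocritical subobjects exist). This is consistent with how the paper uses the theorem, since $\Sp(\C)$ and the notation $E(\pi)$ are only set up in the Gabriel context. One cosmetic point: in your surjectivity step you invoke ``stability and indecomposability'' to get $E(\pi)\in\F$ from $\pi\in G(\tau)$, but no hypothesis is needed there---$\tor_\tau(E(\pi))=0$ is already the statement $E(\pi)\in\F$.
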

\begin{proof}
The first part of the statement comes from {\rm \cite[Corollaries 1 and 2 on page 384]{Gabriel}}. Assume that $\C$ is stable and let $\pi\preceq \pi'\in\Sp(\C)$, that is, $\hom_\C(E(\pi'),E(\pi))\neq 0$ (see Lemma \ref{stab_ort}). If $\tor_\tau(E(\pi'))\neq 0$, then $E(\pi')\in \T$ (by stability), thus any proper quotient of $E(\pi')$ is $\tau$-torsion. Hence, $\tor_\tau(E(\pi))\neq 0$. We proved that $S(\tau)$ is specialization closed, the fact that $G(\tau)$ is generalization closed follows from the fact that it is the complement of $S(\tau)$.
\\
Finally, let $G$ be a generalization closed subset of $\Sp(\C)$ and let 
$$\underline\T={}^{\perp}\{E(\pi):\pi\in G\}\,,\ \underline\F=\underline\T^{\perp}\ \text{ and }\ \underline\tau=(\underline\T,\underline\F)\,.$$
Then $(\Sp(\C)\setminus G)\cup G=\Sp(\C)=S(\underline\tau)\cup G(\underline\tau)$ and it is easy to see that $G\subseteq G(\underline\tau)$. Let $\pi'\in \Sp(\C)\setminus G$. If, looking for a contradiction, $E(\pi')\notin\underline \T$, then there exists $\pi\in G$ such that $\hom_\C(E(\pi'),E(\pi))\neq 0$. By Lemma \ref{stab_ort} $\pi'$ is a generalization of $\pi$ and so $\pi'\in G$, which is a contradiction. Hence, $\Sp(\C)\setminus G\subseteq S(\underline\tau)$ and so  $S(\underline\tau)=\Sp(\C)\setminus G$ and $G(\underline\tau)=G$.
\end{proof}

\subsection{(Relative) Gabriel dimension}
The concept of Gabriel dimension was introduced in \cite{Gabriel} (under the name of ``Krull dimension") and systematically studied in \cite{GR} and in many other papers and books after that. We introduce here a relative version of this invariant.

\begin{definition}
Let $\C$ be a Gabriel category, let $\tau=(\T,\F)$ be a torsion theory and let $X\in \C$ be an object. We define respectively the {\em $\tau$-Gabriel dimension} of $\C$ and the {\em $\tau$-Gabriel dimension} of $X$ as follows
$$\Gdim_\tau(\C)=\min\{\alpha:\C/\T= (\C/\T)_\alpha\}\ \text{ and }\ \Gdim_\tau(X)=\min\{\alpha:\Q_\tau(X)\in (\C/\T)_\alpha\}\,.$$
When $\tau=(0,\C)$ is the trivial torsion theory, the $\tau$-Gabriel dimension is called {\em Gabriel dimension} and we denote it respectively by $\Gdim(\C)$ and $\Gdim(X)$.
\end{definition}

\begin{remark}\label{dim_inj}
Let $\C$ be a stable Gabriel category and let $E$ be an indecomposable injective object. By Lemma \ref{indec_prime}, there is an ordinal $\alpha$ and an $\alpha$-cocritical object $C$ such that $E\cong E(C)$. By construction $\Gdim(C)=\alpha+1$ and, by stability, $\Gdim(E(C))=\Gdim(C)$. This shows that the Gabriel dimension of an indecomposable injective object is always a successor ordinal.
\end{remark}

In the following lemma we collect some useful properties of Gabriel dimension; their proof is given in \cite{Virili_tesi} for $\tau=(0,\C)$, the general case is completely analogous. 

\begin{lemma}\label{pre1}{\rm\cite[Lemma 2.13 and Corollaries 2.15-2.16]{Virili_tesi}}
Let $\C$ be a Gabriel category and let $\tau=(\T,\F)$ be a torsion theory. Then:
\begin{enumerate}[\rm (1)]
\item $\Gdim_\tau(\C)=\sup\{\Gdim_\tau(X): X\in\C\}$;
\item if $Y\leq X\in \C$, then $\Gdim_\tau(X)=\max\{\Gdim_\tau(Y),\Gdim_\tau(X/Y)\}$;
\item if $\{X_i:i\in I\}$ is a family of objects in $\C$, then $\Gdim_\tau(\bigoplus_I X_i)=\sup_I\Gdim_\tau(X_i)$;
\item let $\alpha$ be an ordinal and $X \in \C$, then $X\in \C_{\alpha+1}$ if and only if there exists an ordinal $\sigma$ and a continuous chain 
$0=Y_0< Y_1< \dots< Y_\sigma=X$, such that $Y_{i+1}/Y_i$ is either $\alpha$-cocritical or $\alpha$-torsion for every $i< \sigma$;
\item if $N\in\C$ is a Noetherian object, then $\Q_\tau(N)$ is Noetherian and $\Gdim_\tau(N)$ is a successor ordinal. Furthermore, there exists a finite series $0=Y_0< Y_1<\dots< Y_k=N$ such that $Y_{i}/Y_{i-1}$ is cocritical for all $i=1,\dots,k$;
\item let $\lambda$ be a limit ordinal and $X \in \C$, then $X\in \C_{\lambda}$ if and only if $X=\bigcup_{\alpha<\lambda}\tor_\alpha(X)$;
\item $\C_{\alpha+1}/\C_\alpha$ is semi-Artinian for all $\alpha<\Gdim(\C)$.
\end{enumerate}
\end{lemma}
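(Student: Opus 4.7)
The plan is to reduce each relative statement to its absolute counterpart, which is proved in \cite{Virili_tesi}, by passing through the quotient category $\C/\T$. By definition $\Gdim_\tau(X)$ equals the absolute Gabriel dimension $\Gdim(\Q_\tau X)$ computed inside $\C/\T$, and $\Gdim_\tau(\C) = \Gdim(\C/\T)$; since $\C/\T$ is itself a Gabriel category, every relative statement becomes the absolute statement applied in $\C/\T$ provided $\Q_\tau$ transports the relevant structure faithfully.

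Statements (1)--(3) then follow almost formally. For (1), every object of $\C/\T$ is of the form $\Q_\tau(\S_\tau Y)$, so the supremum of $\Gdim$ over $\C/\T$ coincides with the supremum of $\Gdim_\tau$ over $\C$. For (2), exactness of $\Q_\tau$ turns $0 \to Y \to X \to X/Y \to 0$ into a short exact sequence in $\C/\T$ and one invokes the absolute version. For (3), $\Q_\tau$ is a left adjoint, so $\Q_\tau(\bigoplus_I X_i) \cong \bigoplus_I \Q_\tau(X_i)$ and the absolute statement applies again. For (5), one notes that the sub-object lattice of $\Q_\tau(N)$ in $\C/\T$ is a quotient of the sub-object lattice of $N$ (via $\tau$-closure), so $\Q_\tau(N)$ is Noetherian, and the absolute version provides the asserted finite cocritical series.

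The remaining items (4), (6) and (7) are absolute assertions about the Gabriel filtration $\{\C_\alpha\}$ of $\C$. For (4), one verifies that the class of objects admitting a continuous chain with $\alpha$-cocritical or $\alpha$-torsion factors is closed under sub-objects, quotients, extensions and arbitrary direct sums; hence it is a torsion class containing $\C_\alpha$ and every $\alpha$-cocritical object, and therefore contains $\C_{\alpha+1}$, the converse inclusion being immediate. Statement (6) follows because $\C_\lambda$ is, by construction, the smallest hereditary torsion class containing $\bigcup_{\alpha < \lambda}\C_\alpha$, and the class of objects of the form $X = \bigcup_{\alpha<\lambda}\tor_\alpha(X)$ already has this closure property. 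For (7), given a nonzero object in $\C_{\alpha+1}/\C_\alpha$, lift it to $X \in \C_{\alpha+1}$ and apply (4): after killing the $\alpha$-torsion factors by $\Q_\alpha$, the first surviving factor is $\alpha$-cocritical, and its image in $\C/\C_\alpha$ is simple by the characterization of cocritical objects used in the proof of Lemma \ref{indec_prime}.

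The main obstacle is the bookkeeping for (5): one must pull a cocritical series of $\Q_\tau(N)$ back to a chain in $N$ whose factors are either cocritical or $\tau$-torsion, and then collapse the $\tau$-torsion factors to obtain a \emph{finite} cocritical series. Noetherianity of $N$ forces termination, but one must take care that each lift remains Noetherian so that the induction closes in finitely many steps.
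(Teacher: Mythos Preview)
The paper gives no proof of this lemma: it simply cites \cite{Virili_tesi} for the case $\tau=(0,\C)$ and asserts that ``the general case is completely analogous''. Your reduction strategy---prove the absolute statements in $\C$, and deduce the $\tau$-relative ones by applying $\Q_\tau$ and using the absolute statements inside $\C/\T$---is exactly what the paper has in mind, and your treatment of (1)--(4), (6), (7) is correct.

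There is, however, a genuine confusion in your handling of (5). The clause ``there exists a finite series $0=Y_0<\dots<Y_k=N$ with $Y_i/Y_{i-1}$ cocritical'' is an \emph{absolute} statement about $N$ in $\C$; it does not mention $\tau$ at all. It therefore follows immediately from the $\tau=(0,\C)$ case in \cite{Virili_tesi}, and no pullback from $\C/\T$ is required. Your proposed pullback would in fact fail: if $N$ has nonzero $\tau$-torsion (in the extreme case $N\in\T$, $N\neq 0$), the series of $\Q_\tau(N)$ in $\C/\T$ sees none of it, and more generally a subquotient whose image in $\C/\T$ is cocritical for the Gabriel filtration of $\C/\T$ need not be cocritical for the Gabriel filtration of $\C$. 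So the ``main obstacle'' you identify is a phantom; the only $\tau$-dependent content of (5) is that $\Q_\tau(N)$ is Noetherian (your lattice argument is fine) and that $\Gdim_\tau(N)$ is a successor (which then follows from the absolute (5) applied to $\Q_\tau(N)$ in $\C/\T$).
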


Given a torsion theory  $\tau=(\T,\F)\in\tors(\C)$, let $\C'=\C/\T$ and consider a torsion theory $\tau'=(\T',\F')\in\tors(\C')$. The following class of objects of $\C$ is a torsion class:
$$\T_{\tau\circ\tau'}=\{X\in \C:\Q_\tau(X)\in \T'\}\,.$$
\begin{definition}
We denote by $\tau\circ\tau'$ the torsion theory whose torsion class is $\T_{\tau\circ\tau'}$.
\end{definition}
Notice that, just by definition, the quotient functors relative to $\tau$, $\tau'$ and $\tau\circ\tau'$ fit in the following commutative diagram:
$$\xymatrix{\C\ar@/_20pt/[rrrr]_{\Q_{\tau\circ\tau'}}\ar[rr]^{\Q_\tau}&&\C/\T\ar[rr]^(0.35){\Q_{\tau'}}&&\C/\T_{\tau\circ\tau'}\cong \C'/\T'}\,.$$

\begin{lemma}\label{rel_circ_gab}
Let $\C$ be a Gabriel category, let $\tau=(\T,\F)\in\tors(\C)$, let $\C'=\C/\T$, denote by $\tau_\alpha$ the torsion theory in $\C'$ whose torsion class is $(\C')_\alpha$ (the $\alpha$-th member of the Gabriel filtration of $\C'$), and let $X\in\C$. Then:
\begin{enumerate}[\rm (1)]
\item $\Gdim_\tau(X)=\alpha+1$ if and only if $\Gdim_{\tau\circ\tau_\alpha}(X)=0$;
\item $\Gdim_{\tau\circ\tau_\alpha}(X)=-1$ implies that $\Gdim_{\tau}(X)\leq \alpha$.
\end{enumerate}
\end{lemma}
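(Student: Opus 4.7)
The strategy is to unwind both assertions via the commutative factorization $\Q_{\tau\circ\tau_\alpha}=\Q_{\tau_\alpha}\circ\Q_\tau$ and the identification $\C/\T_{\tau\circ\tau_\alpha}\cong\C'/\T_\alpha$ displayed just above the statement, where $\C':=\C/\T$ is itself a Gabriel category. Setting $Y:=\Q_\tau(X)\in\C'$, both assertions become statements about the Gabriel dimension of $Y$ inside $\C'$. Part (2) is then essentially tautological: $\Gdim_{\tau\circ\tau_\alpha}(X)=-1$ says $\Q_{\tau_\alpha}(Y)=0$, i.e.\ $Y\in(\C')_\alpha$, which by definition of $\Gdim_\tau$ is the condition $\Gdim_\tau(X)\le\alpha$.

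For part (1), the key ingredient is the general identity $(\C')_{\alpha+1}=\Q_{\tau_\alpha}^{-1}\bigl((\C'/\T_\alpha)_0\bigr)$. I would prove it via the bijective correspondence between hereditary torsion classes of $\C'$ containing $(\C')_\alpha$ and hereditary torsion classes of $\C'/\T_\alpha$: under this bijection $(\C')_{\alpha+1}$ corresponds to $\Q_{\tau_\alpha}((\C')_{\alpha+1})$. On the one hand, Lemma~\ref{pre1}(7) gives $\Q_{\tau_\alpha}((\C')_{\alpha+1})\subseteq(\C'/\T_\alpha)_0$; on the other hand, every $\alpha$-cocritical object of $\C'$ maps to a simple object of $\C'/\T_\alpha$, so $\Q_{\tau_\alpha}((\C')_{\alpha+1})$ is a hereditary torsion class of $\C'/\T_\alpha$ containing every simple object, and hence contains the smallest such class, which is $(\C'/\T_\alpha)_0$. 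Combining the two inclusions gives an equality in $\C'/\T_\alpha$, and transporting back along the bijection yields the desired identity in $\C'$.

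Granted this identity, part (1) is pure translation: $\Gdim_\tau(X)=\alpha+1$ is the conjunction of $Y\in(\C')_{\alpha+1}$ and $Y\notin(\C')_\alpha$; by the identity the first clause reads $\Q_{\tau_\alpha}(Y)\in(\C'/\T_\alpha)_0$, while the second reads $\Q_{\tau_\alpha}(Y)\neq 0$; together these say exactly that $\Q_{\tau\circ\tau_\alpha}(X)$ is a nonzero semi-Artinian object of $\C'/\T_\alpha$, i.e.\ $\Gdim_{\tau\circ\tau_\alpha}(X)=0$. The only nontrivial step is the identity above; the remaining work is careful bookkeeping, in particular distinguishing the case $Y\in(\C')_\alpha$ (which gives $\Gdim_{\tau\circ\tau_\alpha}(X)=-1$, not $0$) from $Y\in(\C')_{\alpha+1}\setminus(\C')_\alpha$.
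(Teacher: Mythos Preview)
Your proof is correct and follows the same route as the paper: both arguments reduce everything to the object $Y=\Q_\tau(X)$ in $\C'$ via the factorization $\Q_{\tau\circ\tau_\alpha}=\Q_{\tau_\alpha}\circ\Q_\tau$, and then read off the statements from the definitions. The paper's proof is extremely terse and simply asserts the equivalence $Y\in(\C')_{\alpha+1}\setminus(\C')_\alpha \Leftrightarrow \Gdim(\Q_{\tau_\alpha}(Y))=0$ as if by definition, whereas you explicitly justify the underlying identity $(\C')_{\alpha+1}=\Q_{\tau_\alpha}^{-1}\bigl((\C'/\T_\alpha)_0\bigr)$ using Lemma~\ref{pre1}(7) and the bijection between torsion classes; your version is more complete but not a different argument.
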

\begin{proof}
(1) $\Gdim_\tau(X)=\Gdim(\Q_\tau(X))=\alpha+1$ if and only if $\Q_\tau(X)\in (\C')_{\alpha+1}\setminus (\C')_\alpha$, that is, $\Gdim_{\tau\circ\tau_\alpha}(X)=\Gdim( \Q_{\tau_\alpha}(\Q_\tau(X)))=0$.

\smallskip\noindent
(2)  $\Gdim_{\tau\circ\tau_\alpha}(X)=-1$ if and only if $\Q_{\tau\circ\tau_\alpha}(X)=0$, that is, $\Q_\tau(X)\in\ker(\Q_{\tau_\alpha})=(\C')_\alpha$. Equivalently, $\Gdim_\tau(X)=\Gdim(\Q_\tau(X))\leq\alpha$.
\end{proof}

\begin{lemma}\label{TTK<Gdim}
Let $\C$ be a stable Gabriel category, let $\tau\in\tors(\C)$ and let $\pi =(\T,\F)$ and $\pi'=(\T',\F')$ be two distinct prime torsion theories.\\
If $\Gdim_\tau(E(\pi))=\Gdim_\tau(E(\pi'))>-1$, then $\hom_\C(E(\pi),E(\pi'))=0$. 
\end{lemma}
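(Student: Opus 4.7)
The plan is to argue by contradiction: assume $\phi\colon E(\pi)\to E(\pi')$ is a nonzero morphism and derive $\pi = \pi'$, contradicting distinctness. The first move is to transfer the situation into $\D := \C/\T$. Since $\Gdim_\tau(E(\pi_i)) > -1$, the image $\Q_\tau(E(\pi_i))$ is nonzero, and stability of $\C$ together with Lemma \ref{split_inj_lemma} applied to the indecomposable injective $E(\pi_i)$ forces $E(\pi_i)$ to be $\tau$-torsion-free, hence $\tau$-local. Setting $E_i := \Q_\tau(E(\pi_i))$, Proposition \ref{loc_esp}(5) identifies $\hom_\D(E_1,E_2)$ with $\hom_\C(E(\pi), E(\pi')) \neq 0$; a standard adjointness argument shows each $E_i$ is injective in $\D$, and the isomorphism $\End_\D(E_i)\cong \End_\C(E(\pi_i))$ with a local endomorphism ring gives indecomposability. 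By the analog of Lemma \ref{indec_prime}(1) in the Gabriel category $\D$, we write $E_i = E_\D(C_i)$ for some $\beta_i$-cocritical object $C_i\in \D$.

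The second move matches the parameters $\beta_i$. For this I would first prove that the Gabriel quotient of a stable Grothendieck category is itself stable: any torsion theory $\tau'\in \tors(\D)$ lifts to $\sigma := \tau\circ\tau'\in \tors(\C)$, which is stable by hypothesis; given $Y\in \T_{\tau'}$ one sets $X := \S_\tau(Y)$, which is $\tau$-torsion-free and belongs to $\T_\sigma$, uses stability of $\sigma$ to deduce $E_\C(X)\in \T_\sigma$, and then verifies that $\Q_\tau(E_\C(X))$ is injective in $\D$ and contains $Y = \Q_\tau(X)$ as an essential subobject, so that $E_\D(Y)\cong \Q_\tau(E_\C(X))\in \T_{\tau'}$. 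Granted stability of $\D$, Remark \ref{dim_inj} applied in $\D$ yields $\Gdim_\D(E_i) = \beta_i + 1$, and the hypothesis forces $\beta_1 = \beta_2 =: \beta$.

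Finally, take a nonzero $\phi_\D\colon E_1\to E_2$ in $\D$ and analyse $K := C_1\cap \ker\phi_\D\leq C_1$. If $K = 0$, then $\phi_\D|_{C_1}$ is monic, so $C_1\hookrightarrow E_2$ and the indecomposable injective $E_1 = E_\D(C_1)$ embeds as a direct summand of $E_2$, yielding $E_1\cong E_2$. If $0\neq K\subsetneq C_1$, the $\beta$-cocriticality of $C_1$ makes $C_1/K$ a nonzero $\beta$-torsion subobject of $E_2$, and by essentiality of $C_2$ in $E_2$ the intersection $(C_1/K)\cap C_2$ is a nonzero subobject of the $\beta$-torsion-free $C_2$ that is simultaneously $\beta$-torsion, an absurdity. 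The remaining degenerate case $C_1\subseteq \ker\phi_\D$ would be handled by descending to the iteratively stable quotient $\D/\D_\beta$, in which each $\Q_\beta^\D(E_i)$ becomes the indecomposable injective envelope of the simple $\Q_\beta^\D(C_i)$, and a socle-series induction forces those simples to be isomorphic, lifting back to $E_1\cong E_2$ in $\D$. In every branch we obtain $E_1\cong E_2$; applying $\S_\tau$ and using $\tau$-locality yields $E(\pi)\cong E(\pi')$, whence Lemma \ref{indec_prime}(2) concludes $\pi = \pi'$, the desired contradiction. The principal technical obstacle is the stability descent for Gabriel quotients, which underlies both the dimension-matching step and the treatment of the degenerate subcase via the quotient $\D/\D_\beta$.
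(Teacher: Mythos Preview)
Your overall strategy is sound and reaches the same conclusion, but it is considerably more elaborate than the paper's argument, and the extra machinery is not actually needed. The paper localizes in a single step at the composite torsion theory $\tau\circ\tau_\beta$, where $\beta+1$ is the common value $\Gdim_\tau(E(\pi))=\Gdim_\tau(E(\pi'))$. Since $\C$ is stable, so is $\tau\circ\tau_\beta$; both $E(\pi)$ and $E(\pi')$ are then $(\tau\circ\tau_\beta)$-local, and Proposition~\ref{loc_esp}(5) identifies the relevant Hom with one computed after localization, where both objects have Gabriel dimension~$0$. At that point the paper never leaves $\C$: it applies stability of $\C$ to the prime $\pi'$ itself, deducing that $E(\pi)$ is $\pi'$-torsion free, hence so is its simple essential sub-object $S$, so $S\hookrightarrow E(\pi')$ and $E(\pi)\cong E(\pi')$. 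There is no case analysis and no need for a ``stability descends to quotients'' lemma.

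Your three-case split in the quotient works cleanly in the first two cases, but all the content sits in the third, and the phrase ``a socle-series induction forces those simples to be isomorphic'' is not yet an argument. A nonzero map $\bar E_1\to\bar E_2$ may well annihilate the simple socle $\bar C_1$; filtering by the socle series of $\bar E_1$ only produces some simple subquotient $T$ of $\bar E_1$ embedding in $\bar E_2$, so $T\cong\bar C_2$. To get $T\cong\bar C_1$ you still need that every simple subquotient of the injective envelope of $\bar C_1$ is isomorphic to $\bar C_1$, and the proof of that is precisely the stability argument the paper uses (via Lemma~\ref{stab_ort}): the prime cogenerated by $\bar E_2$ is stable, $\bar E_1$ is not torsion for it, hence torsion free, hence $\hom(\bar C_1,\bar E_2)\neq0$. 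So once the degenerate case is filled in correctly, your argument collapses to the paper's endgame; the preliminary case analysis and the auxiliary stability-of-quotients lemma buy nothing. The cleanest repair is to skip the case split and, as the paper does, pass directly to the composite localization $\tau\circ\tau_\beta$ and run the two-line stability argument there.
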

\begin{proof}
By Remark \ref{dim_inj}, $\Gdim_\tau(E(\pi))=\Gdim_\tau(E(\pi'))=\beta+1$ for some ordinal $\beta\geq -1$. Denote by $\tau_\beta\in\tors(\C/\T)$ the torsion theory whose torsion class is $(\C/\T)_\beta$. Then, $\Gdim_{\tau\circ\tau_\beta}(E(\pi))=\Gdim_{\tau\circ\tau_\beta}(E(\pi'))=0$ and, by stability, both $E(\pi)$ and $E(\pi')$ are $\tau\circ\tau_\beta$-torsion free, so $\tau\circ\tau_\beta$-local. By Proposition \ref{loc_esp},
$$\hom_{\C}(E(\pi),E(\pi'))\cong\hom_{\C/\T_{\tau\circ\tau_\beta}}(E(\pi),E(\pi'))\,,$$
so there is no loss in generality if we assume that $\tau$ is the trivial torsion theory and $\beta=-1$. Suppose now that $\hom_\C(E(\pi),E(\pi'))\neq0$, that is, $E(\pi)$ is not $\pi'$-torsion, thus, by stability, $E(\pi)$ is $\pi'$-torsion free. Now, since we are assuming $\Gdim(E(\pi))=0$, there exists a simple object $S\in\C$ such that $E(\pi)\cong E(S)$ and $\hom_\C(S,E(\pi'))\neq 0$. By the simplicity of $S$ and the uniformity of $E(\pi')$ we obtain that $E(\pi')\cong E(S)\cong E(\pi)$, which is a contradiction.
\end{proof}

\begin{corollary}\label{coroTTK}
Let $\C$ be a stable Gabriel category, let $\tau=(\T,\F)\in\tors(\C)$ and let $\pi \neq \pi'\in\Sp(\C)$.\\
If $E(\pi),E(\pi')\notin \T$ and $\hom_\C(E(\pi),E(\pi'))\neq 0$, then $\Gdim_\tau(E(\pi))>\Gdim_\tau(E(\pi'))$. 
\end{corollary}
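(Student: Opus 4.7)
The plan is to reduce the claim to Lemma~\ref{TTK<Gdim} and then to invoke the composite torsion theory construction introduced before Lemma~\ref{rel_circ_gab}. First, since $\C$ is stable, Lemma~\ref{split_inj_lemma} applied to each of the indecomposable injectives $E(\pi)$, $E(\pi')$ forces the direct summand $\tor_\tau(E(\pi))$ (resp.\ $\tor_\tau(E(\pi'))$) to be either $0$ or the whole object; the hypothesis $E(\pi),E(\pi')\notin\T$ rules out the second option, so both lie in $\F$. Consequently $\Q_\tau(E(\pi)),\Q_\tau(E(\pi'))\neq 0$ and the two $\tau$-Gabriel dimensions are $\geq 0$. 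The contrapositive of Lemma~\ref{TTK<Gdim}, combined with $\pi\neq\pi'$ and $\hom_\C(E(\pi),E(\pi'))\neq 0$, then excludes $\Gdim_\tau(E(\pi))=\Gdim_\tau(E(\pi'))$, so the two dimensions are distinct.

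It remains to rule out the possibility $\Gdim_\tau(E(\pi))<\Gdim_\tau(E(\pi'))$. Suppose, for a contradiction, that $\gamma:=\Gdim_\tau(E(\pi))<\Gdim_\tau(E(\pi'))$. Let $\tau_\gamma$ denote the torsion theory in $\C/\T$ whose torsion class is $(\C/\T)_\gamma$, and form the composite torsion theory $\tau\circ\tau_\gamma$ in $\C$. By the very definition of $\T_{\tau\circ\tau_\gamma}$, an object $X\in\C$ is $\tau\circ\tau_\gamma$-torsion if and only if $\Q_\tau(X)\in(\C/\T)_\gamma$. Hence $E(\pi)\in\T_{\tau\circ\tau_\gamma}$, whereas $E(\pi')\notin\T_{\tau\circ\tau_\gamma}$ because $\Q_\tau(E(\pi'))\notin(\C/\T)_\gamma$. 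Stability of $\C$ transfers to $\tau\circ\tau_\gamma$, so a further application of Lemma~\ref{split_inj_lemma} to the indecomposable injective $E(\pi')$ shows that $E(\pi')$ must be $\tau\circ\tau_\gamma$-torsion-free.

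This yields the desired contradiction: a non-zero map from the $\tau\circ\tau_\gamma$-torsion object $E(\pi)$ to the $\tau\circ\tau_\gamma$-torsion-free object $E(\pi')$ cannot exist, violating $\hom_\C(E(\pi),E(\pi'))\neq 0$. The only step that requires any insight is choosing the auxiliary torsion theory; once one realizes that cutting $\tau$ off at the Gabriel dimension of the smaller object separates $E(\pi)$ from $E(\pi')$ into orthogonal classes, the stability hypothesis and the elementary fact that $\Hom$ vanishes across a torsion pair finish the argument.
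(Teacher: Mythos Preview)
Your proof is correct. Both arguments use the composite torsion theory construction, but the organization differs. The paper argues by contradiction from $\Gdim_\tau(E(\pi))\leq\Gdim_\tau(E(\pi'))$ in one shot: writing $\Gdim_\tau(E(\pi))=\alpha+1$, it first passes to $\tau_1=\tau\circ\bar\tau_1$ (with $\bar\tau_1$ the $(\C/\T)_\alpha$-torsion), then introduces a \emph{second} auxiliary torsion theory $\bar\tau_2$ cogenerated by $\Q_{\tau_1}(E(\pi'))$, and finally checks that both objects acquire $\tau_2$-Gabriel dimension $0$, reaching a contradiction with Lemma~\ref{TTK<Gdim}. You instead split into the two cases $=$ and $<$: equality is immediately excluded by the contrapositive of Lemma~\ref{TTK<Gdim}, while the strict inequality is dispatched by a single composite $\tau\circ\tau_\gamma$ and the elementary vanishing of $\Hom$ from torsion to torsion-free objects. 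Your route is shorter, avoids the second auxiliary torsion theory entirely, and makes transparent why stability is the only nontrivial ingredient; the paper's route has the minor advantage of a uniform treatment of both cases, at the cost of a more intricate reduction.
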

\begin{proof}
Suppose, looking for a contradiction that $\Gdim_\tau(E(\pi))\leq \Gdim_\tau(E(\pi'))$, let $\Gdim_\tau(E(\pi))=\alpha+1$ and let
\begin{enumerate}[\rm --]
\item $\C_1=\C/\T$, $\bar\tau_1=(\T_1,\F_1)\in\tors(\C_1)$, where $\T_1=(\C_1)_\alpha$, $\tau_1=\tau\circ\bar\tau_1\in\tors(\C)$, $E_1=\Q_{\tau_1}(E(\pi))$  and $E_1'=\Q_{\tau_1}(E(\pi'))$;
\item $\C_2=\C_1/\T_1$, $\bar\tau_2=(\T_2,\F_2)\in\tors(\C_2)$, where $\T_2={}^{\perp}\{E_1'\}$ and $\tau_2=\tau\circ\bar\tau_2\in\tors(\C)$;
\item $\C_3=\C_2/\T_2$.
\end{enumerate}
Both $E(\pi)$ and $E(\pi')$ are $\tau_1$-local and so, by Proposition \ref{loc_esp},
$$\hom_{\C_2}(E_1,E_1')\cong\hom_\C(E(\pi),E(\pi'))\neq 0\,.$$
This means that $E_1$ is not $\bar \tau_2$-torsion, thus both $\Gdim_{\tau_2}(E(\pi))$ and $\Gdim_{\tau_2}(E(\pi'))$ are strictly bigger than $-1$. On the other hand, $\Gdim_{\tau_2}(E(\pi))\leq\Gdim_{\tau_1}(E(\pi))=0$ (see Lemma \ref{rel_circ_gab}), while $\Gdim_{\tau_2}(E(\pi'))=0$ (since, given a cocritical sub-object $C$ of $E(\pi')$, $\loc_{\tau_2}(C)$ is simple). This is a contradiction by Lemma \ref{TTK<Gdim}.
\end{proof}

\subsection{Local cohomology}\label{loc_coh_subs}
In this subsection we  introduce a very general notion of local cohomology. The definitions and many arguments in the proofs are adapted directly from existing papers like \cite{Albu-Nastasescu1}, \cite{Albu-Nastasescu2}, \cite{G}, \cite{Golan-libro}, \cite{Golan-Raynaud} and many others. We give here complete proofs for making this paper as self-contained as possible and because, to the best of the author's knowledge, there is no book or paper with a comprehensive exposition of these matters in a setting as general as we need in the sequel.

\begin{definition}
Let $\C$ be a Grothendieck category and let $\tau=(\T,\F)\in\tors(\C)$. The {\em $n$-th $\tau$-local cohomology} $\Gamma_\tau^n:\C\to \T$ is the $n$-th right derived functor of $\tor_\tau$. 
\end{definition}
It is difficult to study the properties of local cohomology in full generality, so we need to impose one or more of the following hypotheses on the ambient category $\C$ in almost all of our results: 
\begin{enumerate}[\rm ({Hyp.}1)]
\item $\C$ is stable.
\item $\C$ is locally Noetherian.
\item all the prime torsion theories on $\C$ are exact.
\end{enumerate}

\begin{example}
Let $\C=\lmod R$ be the category of left $R$-modules over a ring $R$. When $\C$ is stable $R$ is said to be a stable ring, while $\C$ is locally Noetherian precisely when $R$ is left Noetherian. When $\C$ satisfies (Hyp.1), (Hyp.2) and (Hyp.3), $R$ is said to be {\em left effective}. Examples of left effective rings include:
\begin{enumerate}[\rm (1)]
\item commutative Noetherian rings;
\item left Noetherian Azumaya algebras, see \cite[page 173]{Golan-Raynaud};
\item prime hereditary Noetherian quasi-local rings which are bounded orders in their classical rings of fractions, see \cite[Example 2.3]{Golan-Raynaud}.
\end{enumerate}
\end{example}

\begin{lemma}\label{coho1}
Let $\C$ be a Grothendieck category, let $\tau=(\T,\F)\in\tors(\C)$ be stable and let $X\in\C$. Then,
\begin{enumerate}[\rm (1)]
\item $\Gamma_\tau^n(X)=0$ for all $n>0$, provided $X$ is $\tau$-torsion;
\item there is a natural isomorphism ${ \Gamma}_\tau^{n}(X)\cong { \Gamma}_\tau^{n}(X/\t_\tau(X))$, for all $n>0$;
\item there is a natural isomorphism $\Gamma_{\tau}^{n+1}(X)\cong\Gamma_\tau^{n}(E(X)/X)$, for all $n>0$;
\item if $X$ is $\tau$-torsion free, then $\Gamma_\tau^1(X)\cong \tor_\tau(E(X)/X)$.
\item there is a natural isomorphism ${ \Gamma}_\tau^{n+1}(X)\cong{\rm R} \l^{n}_\tau(X)$, for all $n>0$;
\item if $M$ is $\tau$-torsion free, then $\Gamma_\tau^1(X)\cong \l_\tau(X)/X$.
\end{enumerate}
\end{lemma}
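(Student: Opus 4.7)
The plan is to address all six parts in order, relying on the injective model of $\Gamma_\tau^\bullet$ (the right derived functors of $\tor_\tau$), on the standard long exact sequences, and on stability through Lemma \ref{split_inj_lemma}.

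For (1), I would exploit stability to construct an injective resolution of a $\tau$-torsion object that stays inside $\T$: since $\T$ is closed under injective envelopes by stability, $E(X)\in\T$; as $\T$ is closed under quotients, $E(X)/X\in\T$; iterating produces $0\to X\to E^\bullet$ with every $E^i$ in $\T$, so $\tor_\tau(E^\bullet)=E^\bullet$ is acyclic in positive degrees, whence $\Gamma_\tau^n(X)=0$ for $n>0$. Parts (2), (3) and (4) then reduce to the long exact sequence of $\Gamma_\tau^\bullet$ applied, respectively, to the sequences $0\to\tor_\tau(X)\to X\to X/\tor_\tau(X)\to0$ (using (1) to annihilate the torsion end terms), $0\to X\to E(X)\to E(X)/X\to 0$ for $n\geq 1$ (using injectivity of $E(X)$ to kill $\Gamma_\tau^n(E(X))$), and the same sequence at the $n=0$ step (using stability to see $E(X)\in\F$ when $X\in\F$, so $\tor_\tau(E(X))=0$).

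Part (6) follows at once by combining (4) with the explicit description in Proposition \ref{loc_esp}(4): for torsion-free $X$, one has $\l_\tau(X)=\pi^{-1}(\tor_\tau(E(X)/X))$, and hence the first isomorphism theorem applied to the restriction of $\pi\colon E(X)\to E(X)/X$ to $\l_\tau(X)$ yields $\l_\tau(X)/X\cong\tor_\tau(E(X)/X)\cong\Gamma_\tau^1(X)$.

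The main obstacle is part (5). I would pick an injective resolution $0\to X\to I^\bullet$ in $\C$ and use Lemma \ref{split_inj_lemma} to split each term as $I^n\cong\tor_\tau(I^n)\oplus(I^n/\tor_\tau(I^n))$, both summands being injective. The second summand is $\tau$-torsion free and injective, hence $\tau$-local (the condition $E(Y)/Y\in\F$ being trivial when $Y$ is already injective), so $\l_\tau(I^n)\cong I^n/\tor_\tau(I^n)$. This upgrades to a short exact sequence of complexes
\[
0\longrightarrow\tor_\tau(I^\bullet)\longrightarrow I^\bullet\longrightarrow\l_\tau(I^\bullet)\longrightarrow 0,
\]
whose cohomology long exact sequence, together with $H^n(I^\bullet)=0$ for $n\geq 1$, yields the connecting isomorphisms $\mathrm{R}\l_\tau^n(X)=H^n(\l_\tau(I^\bullet))\cong H^{n+1}(\tor_\tau(I^\bullet))=\Gamma_\tau^{n+1}(X)$ for every $n\geq 1$. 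The delicate point is the degree-wise splitting, which is precisely where stability is indispensable; without it the short exact sequence of complexes above would not be available and the comparison of the two derived theories would break down.
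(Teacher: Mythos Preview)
Your proof is correct and follows essentially the same strategy as the paper's: parts (1)--(4) via the standard long exact sequences combined with stability, and part (5) via the degree-wise splitting $I^n\cong\tor_\tau(I^n)\oplus I^n/\tor_\tau(I^n)$ from Lemma \ref{split_inj_lemma} to produce the short exact sequence of complexes $0\to\tor_\tau(I^\bullet)\to I^\bullet\to\l_\tau(I^\bullet)\to 0$. The only minor deviation is in part (6): you deduce it from (4) together with Proposition \ref{loc_esp}(4), whereas the paper reads it off directly from the low-degree portion $0\to\tor_\tau(X)\to X\to\l_\tau(X)\to\Gamma_\tau^1(X)\to 0$ of the same long exact sequence used for (5); both routes are immediate.
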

\begin{proof}
(1) Let $E^\bullet(X)$ be a minimal injective resolution of $M$, so $E^0(X)=E(X)$ and $E^n(X)=E(d(E^{n-1}(X)))$ for all $n>0$. Using the fact that $\tau$ is stable, we obtain that $E^n(X)$ is $\tau$-torsion for all $n\in\N$ and so $E^\bullet(X)=\t_\tau(E^\bullet (X))$ is an exact complex in all degrees but, eventually, in the $0$-th degree.

\smallskip\noindent
(2) Consider the short exact sequence $0\to \t_\tau(X)\to X\to X/\t_\tau(X)\to 0$. % \ \ \ \ 0\to M/\t_\tau(M) \to \l_\tau(M) \to \t_\tau(M)\,,$$
This gives a long exact sequence in cohomology 
\begin{align*}0\to\Gamma_\tau^0(\t_\tau(X)) \to \Gamma_\tau^0(X)&\to \Gamma_\tau^0(X/\t_\tau(X))\to\\
& \to \Gamma_\tau^1(\t_\tau(X))\to \Gamma_\tau^1(X)\to \Gamma_\tau^1(X/\t_\tau(X))\to \Gamma_\tau^2(\t_\tau(X))\to\cdots\,,\end{align*}
which implies the desired isomorphism as, by part (1), we have that $\Gamma_\tau^n(\t_\tau(X))=0$, for all $n>0$.

\smallskip\noindent
(3)--(4) Consider the short exact sequence $0\to X\to E(X)\to E(X)/X\to 0$. This gives a long exact sequence in cohomology
\begin{align*}0\to \t_\tau(X)&\to \t_{\tau}(E(X))\to \t_\tau(E(X)/X)\to \Gamma_\tau^1(X)\to \Gamma_\tau^1(E(X))\to\\ 
&\Gamma_\tau^1(E(X)/X)\to \Gamma_\tau^2(X)\to \Gamma_\tau^2(E(X))\to \Gamma_\tau^2(E(X)/X)\to \Gamma_\tau^3(X)\to \Gamma_\tau^3(E(X))\to\dots\end{align*}
which implies the isomorphism in (3) as, $\Gamma_\tau^n(E(X))=0$ for all $n>0$, being $E(X)$ is injective. If $X\in \F$, then $E(X)\in \F$ by stability and so  $\t_{\tau}(E(X))=0$, proving (4).

\smallskip\noindent
(5)--(6) Fix  an injective resolution $0\to X\to E^\bullet$. By Lemma \ref{split_inj_lemma}, $E^n\cong \t_\tau(E^n)\oplus E^n/\t_\tau(E^n)$ for all $n\in\N$. Consider the complex $\t_\tau(E^\bullet)$ and the quotient complex  $E^\bullet/ \t_\tau(E^\bullet)$, which are both complexes of injective objects. Notice that there is a short exact sequence in $\Ch(\C)$
\begin{equation*}\label{split_inj_complex}0\to \t_\tau(E^\bullet)\to E^\bullet\to E^\bullet/ \t_\tau(E^\bullet)\to 0\,.\end{equation*}
The cohomologies of the complex $\t_\tau(E^\bullet)$ are exactly the $\tau$-local cohomologies of $M$, while the cohomologies of $E^\bullet$ are all trivial but, eventually, the $0$-th cohomology. Furthermore, $E^n/\t_\tau(E^n)$ is $\tau$-torsion free and injective, so it is $\tau$-local; in particular,  $\l_\tau(E^n)\cong E^n/\t_\tau(E^n)$ for all $n\in\N$. We obtain an isomorphism of complexes $E^\bullet/\t_\tau(E^\bullet)\cong \l_\tau(E^\bullet)$, which shows that the cohomologies of the complex $E^\bullet/\t_\tau(E^\bullet)$ give exactly the right derived functors of the localization functor $\l$. Thus, we have a long exact sequence
\begin{align*}
0\to \t_\tau(X) \to X&\to \l_\tau(X)\to \Gamma_\tau^1(X)\to 0\to {\rm R}^1\l_\tau(X)\to\\
&\to \Gamma_\tau^2(X)\to 0\to {\rm R}^2\l_\tau(X)\to \Gamma_\tau^3(X)\to 0\to {\rm R}^3\l_\tau(X)\to\cdots\,,\end{align*}
which gives the desired isomorphisms.
\end{proof}

An immediate consequence of parts (5) and (6) of the above proposition is the following
\begin{corollary}\label{vanish_ex}
Let $\C$ be a Grothendieck category, let $\tau\in\tors(\C)$ be stable and exact, and let $X\in\C$. Then,
$$\Gamma_\tau^n(X)=0 \ \ \forall n>1\,.$$
If $X$ is $\tau$-local, then also $\Gamma_\tau^0(X)=\Gamma_\tau^1(X)=0$.
\end{corollary}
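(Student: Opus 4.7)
The plan is to deduce both assertions directly from Lemma \ref{coho1}, specifically parts (5) and (6), together with the standard fact that an exact functor has vanishing higher derived functors.

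For the first assertion, I would argue as follows. Since $\tau$ is exact by hypothesis, the localization functor $\l_\tau = \S_\tau \circ \Q_\tau$ is an exact functor (this is essentially the definition of exactness of $\tau$). An exact functor between abelian categories has trivial higher right derived functors, so $\mathrm{R}^n\l_\tau = 0$ for every $n > 0$. Now Lemma \ref{coho1}(5) supplies a natural isomorphism $\Gamma_\tau^{n+1}(X) \cong \mathrm{R}^n\l_\tau(X)$ valid for all $n > 0$, which immediately gives $\Gamma_\tau^{n+1}(X) = 0$ for every $n > 0$, i.e.\ $\Gamma_\tau^n(X) = 0$ for every $n > 1$, as claimed.

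For the second assertion, suppose $X$ is $\tau$-local. By definition of $\tau$-locality, $X$ is $\tau$-torsion free, hence $\Gamma_\tau^0(X) = \tor_\tau(X) = 0$. To compute $\Gamma_\tau^1(X)$, I would invoke Lemma \ref{coho1}(6), which (since $X$ is $\tau$-torsion free) gives the natural isomorphism $\Gamma_\tau^1(X) \cong \l_\tau(X)/X$. It remains to observe that for a $\tau$-local object $X$ the unit of the adjunction $X \to \l_\tau(X) = \S_\tau\Q_\tau(X)$ is an isomorphism: indeed, $\tau$-local objects are precisely those lying in the essential image of $\S_\tau$, so Lemma \ref{Krause_loc} (or equivalently, the fact that $\S_\tau$ is fully faithful) forces $\l_\tau(X) \cong X$. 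Consequently $\l_\tau(X)/X = 0$, and $\Gamma_\tau^1(X) = 0$.

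There is no real obstacle here; the corollary is essentially a bookkeeping consequence of parts (5) and (6) of the preceding lemma once one combines them with the exactness hypothesis on $\tau$. The only subtle point worth flagging explicitly in the write-up is the identification $\l_\tau(X) \cong X$ for $\tau$-local $X$, which should be referenced back to the adjunction $(\Q_\tau, \S_\tau)$ and the full faithfulness of $\S_\tau$ recorded just before Lemma \ref{Krause_loc}.
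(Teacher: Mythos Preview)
Your proposal is correct and matches the paper's intended argument: the corollary is stated in the paper as an immediate consequence of parts (5) and (6) of Lemma~\ref{coho1}, and you have spelled out precisely that deduction. The only thing to add is that the paper gives no separate proof at all, so your write-up is in fact more detailed than what appears there.
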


\begin{corollary}\label{1leq2vanish}
Let $\C$ be a Grothendieck category and let $\tau_2\preceq \tau_1\in\tors(\C)$ be stable. Then, 
$$(\Gamma_{\tau_1}^i(X)=0\,, \ \forall 0\leq i\leq n)\ \Rightarrow\ (\Gamma_{\tau_2}^i(X)=0\,, \ \forall 0\leq i\leq n)$$
for all $X\in \C$ and $n\in \N$.
\end{corollary}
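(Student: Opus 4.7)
The plan is to proceed by induction on $n\in\N$, using the dimension-shift isomorphisms from Lemma \ref{coho1} together with the fact that $\tau_2\preceq\tau_1$ means $\F_1\subseteq\F_2$. For $n=0$, the hypothesis $\Gamma_{\tau_1}^0(X)=\tor_{\tau_1}(X)=0$ is just the statement $X\in\F_1$, and the inclusion $\F_1\subseteq\F_2$ delivers $\Gamma_{\tau_2}^0(X)=0$ at once.

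For the inductive step, the key move is to apply the inductive hypothesis not to $X$ itself but to the auxiliary object $E(X)/X$. Assuming $\Gamma_{\tau_1}^i(X)=0$ for $0\le i\le n$, I would first translate this into vanishing of $\Gamma_{\tau_1}^j(E(X)/X)$ for $0\le j\le n-1$: the case $j=0$ follows from Lemma \ref{coho1}(4) applied to the $\tau_1$-torsion-free $X$, namely $\tor_{\tau_1}(E(X)/X)\cong\Gamma_{\tau_1}^1(X)=0$; the cases $1\le j\le n-1$ follow directly from the shift $\Gamma_{\tau_1}^{j+1}(X)\cong\Gamma_{\tau_1}^{j}(E(X)/X)$ of Lemma \ref{coho1}(3). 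The inductive hypothesis, applied to $E(X)/X$, then yields $\Gamma_{\tau_2}^j(E(X)/X)=0$ for $0\le j\le n-1$. Reversing the same dimension-shift formulas for $\tau_2$, and using that $X$ is also $\tau_2$-torsion free (by the base case), I obtain $\Gamma_{\tau_2}^1(X)\cong\tor_{\tau_2}(E(X)/X)=0$ and $\Gamma_{\tau_2}^i(X)\cong\Gamma_{\tau_2}^{i-1}(E(X)/X)=0$ for $2\le i\le n$. Together with the base case this completes the induction.

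The argument is essentially mechanical once one commits to passing to $E(X)/X$ in the induction. The only point requiring attention is the index book-keeping at the boundary between Lemma \ref{coho1}(4) (which converts between $\Gamma^1$ and the torsion functor applied to $E(X)/X$) and Lemma \ref{coho1}(3) (which handles all higher shifts); this asymmetry is what forces a separate treatment of the cases $i=0$, $i=1$ and $i\ge 2$. I do not foresee a genuine obstacle, since stability of both $\tau_1$ and $\tau_2$ is exactly what is needed to invoke Lemma \ref{coho1} for each of them in parallel.
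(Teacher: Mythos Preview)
Your proof is correct and follows essentially the same approach as the paper: induction on $n$, with the base case handled via $\F_1\subseteq\F_2$ (equivalently $\tor_{\tau_2}(X)\subseteq\tor_{\tau_1}(X)$), and the inductive step carried out by passing to $E(X)/X$ via the dimension-shift isomorphisms of Lemma~\ref{coho1}(3)--(4). The index bookkeeping you describe matches the paper's items (a)--(c) exactly.
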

\begin{proof}
We prove our statement by induction on $n\in \N$.
\\
If $n=0$, the result is clear as $0=\Gamma_{\tau_1}^0(X)\cong\t_{\tau_1}(X)\supseteq \t_{\tau_2}(X)\cong\Gamma_{\tau_2}^0(X)$.
\\
If $n\geq1$ and suppose the result holds for any smaller integer. Consider the following isomorphisms:
\begin{enumerate}[\rm (a)]
\item $\Gamma_{\tau_1}^0(X)\cong\Gamma_{\tau_2}^0(X)=0$, as in the case $n=0$, in particular $X$ is both $\tau_1$ and $\tau_2$-torsion free;
\item $\Gamma_{\tau}^{1}(X)\cong\Gamma_\tau^{0}(E(X)/X)$ for $\tau\in\{\tau_1,\tau_2\}$, by (a) and Lemma \ref{coho1}(4).
\item $\Gamma_{\tau}^{k}(X)\cong\Gamma_\tau^{k-1}(E(X)/X)$ for all $1<k\leq n$ and $\tau\in\{\tau_1,\tau_2\}$, by Lemma \ref{coho1}(3).
\end{enumerate}
By (b) and (c), we have that $\Gamma_{\tau_1}^{i}(E(X)/X)=0$ for all $0\leq i\leq n-1$ and so, by inductive hypothesis, $\Gamma_{\tau_{2}}^{i}(E(X)/X)=0$ for all $0\leq i\leq n-1$. Applying again (b) and (c), $\Gamma_{\tau_2}^{i}(X)=0$ for all $1\leq i\leq n$. Hence, adding (a), $\Gamma_{\tau_2}^{i}(X)=0$ for all $0\leq i\leq n$ which is what we wanted to prove.
\end{proof}

%Given a Noetherian object  $N$ in $\C$, recall that $N$ is automatically a {\em compact} object, that is, the functor $\hom_\C(N,-)$ commutes with direct limits. More explicitly, given a directed set $\Lambda$ and a direct system $(X_\alpha,\phi_{\beta,\alpha})_\Lambda$ in $\C$, we have a natural isomorphism
%\begin{equation}\label{Noeth_compact}\varinjlim_{\alpha\in\Lambda}\, \hom_\C(N,X_{\alpha})\cong\hom_\C\left(N,\,\varinjlim_{\alpha\in\Lambda} X_{\alpha}\right)\,.\end{equation}
%Hence, the hypothesis that $\C$ is locally Noetherian guaranties the existence of ``many" compact objects.

\begin{proposition}\label{comm_dir}
Let $\C$ be a Grothendieck category satisfying (Hyp.2) and let $\tau\in\tors(\C)$. Then, 
\begin{enumerate}[\rm (1)]
\item given two objects $X$ and $M\in \C$, we have a natural isomorphism 
$$\hom_\C(X,\t_\tau M)\cong \varinjlim_Y \hom_\C(X/Y,M)\,,$$ with $Y$ ranging in the family of sub-objects of $X$ such that $X/Y\in\T$ (ordered by reverse inclusion);
\item all the $\tau$-local cohomology functors commute with direct limits. 
\end{enumerate}
\end{proposition}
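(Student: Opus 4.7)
For part (1), the plan is to construct the claimed isomorphism directly. Given a sub-object $Y\leq X$ with $X/Y\in\T$, the composite $X\twoheadrightarrow X/Y\xrightarrow{f} M$ of any $f\in\hom_\C(X/Y,M)$ has image a quotient of $X/Y$, hence $\tau$-torsion, hence contained in $\t_\tau M$; these factorizations are compatible with the transition maps in the filtered diagram and so define a natural map $\Phi:\varinjlim_Y\hom_\C(X/Y,M)\to\hom_\C(X,\t_\tau M)$. Conversely, given $g:X\to\t_\tau M$, the sub-object $Y:=\ker(g)$ satisfies $X/Y\cong\Im(g)\leq\t_\tau M$, which is $\tau$-torsion by heredity, so $g$ yields a well-defined class in the colimit. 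Verifying that these two assignments are mutually inverse and natural is routine, and uses no hypothesis beyond the defining properties of a hereditary torsion theory in a Grothendieck category.

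For part (2), the strategy is to first dispatch the case $n=0$ via (1) and then bootstrap to $n>0$ through injective resolutions. By (Hyp.2), $\C$ admits a generating set of Noetherian objects; for any such $N$ the functor $\hom_\C(N,-)$ commutes with filtered direct limits (since $N$ is finitely presented in a locally Noetherian category), and every quotient $N/Y$ is again Noetherian and so enjoys the same property. Hence, given a directed system $(M_i)_I$ with colimit $M$, (1) together with an exchange of two filtered colimits yields
\begin{equation*}
\hom_\C(N,\t_\tau M)\cong\varinjlim_Y\varinjlim_i\hom_\C(N/Y,M_i)\cong\varinjlim_i\hom_\C(N,\t_\tau M_i)\cong\hom_\C(N,\varinjlim_i\t_\tau M_i).
\end{equation*}
Since this holds naturally for $N$ ranging over a generating set (indeed over all Noetherian objects), a standard Yoneda argument gives the natural isomorphism $\t_\tau M\cong\varinjlim_i\t_\tau M_i$.

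For $n>0$, I would compute $\Gamma_\tau^n$ using injective resolutions and exploit Proposition \ref{prop_noeth}(1): in a locally Noetherian Grothendieck category the class of injective objects is closed under filtered direct limits. Thus, if $(M_i\to E_i^\bullet)_I$ is a compatible directed system of injective resolutions, then exactness of $\varinjlim$ together with the remark just made implies that $\varinjlim_i E_i^\bullet$ is an injective resolution of $\varinjlim_i M_i$. Combining this with the $n=0$ case already established and the fact that cohomology of cochain complexes commutes with filtered colimits, one obtains
\begin{equation*}
\Gamma_\tau^n(M)\cong H^n(\t_\tau\varinjlim_i E_i^\bullet)\cong H^n(\varinjlim_i\t_\tau E_i^\bullet)\cong\varinjlim_i H^n(\t_\tau E_i^\bullet)\cong\varinjlim_i\Gamma_\tau^n(M_i).
\end{equation*}
The main obstacle, and the only delicate bookkeeping in the argument, is to produce such a compatible system of injective resolutions indexed by $I$. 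I would address this either by passing to the Grothendieck diagram category $\C^I$ and taking a single injective resolution of the object $(M_i)_I$ there, or by building the $E_i^\bullet$ inductively one degree at a time, repeatedly forming injective envelopes of the directed system of current cokernels and tracking the induced transition maps.
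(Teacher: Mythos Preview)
Your argument for part (1) is essentially identical to the paper's: both construct the map $\Phi$ from the colimit into $\hom_\C(X,\t_\tau M)$ via precomposition with the projections $X\to X/Y$, and invert it by sending $g$ to its class indexed by $Y=\ker(g)$.

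For part (2) your route diverges from the paper's in both steps. For $n=0$ the paper argues directly with Noetherian sub-objects of $\t_\tau(\varinjlim M_i)$, lifting them back through the system; your use of (1) on Noetherian generators together with an exchange of filtered colimits is a genuinely different and arguably slicker reduction, and it is correct (the key point, which you implicitly use, is that the index poset of torsion-quotient sub-objects $Y\leq N$ is independent of the target $M$, so the two colimits really do commute, and the resulting isomorphism is induced by the canonical comparison map). For $n>0$ the paper bypasses the construction of compatible resolutions entirely by invoking Grothendieck's criterion \cite[Proposition~3.6.2]{Grothendieck}: once $\t_\tau$ commutes with filtered colimits and filtered colimits of injectives are $\t_\tau$-acyclic (immediate from Proposition~\ref{prop_noeth}(1)), all higher derived functors automatically commute with filtered colimits. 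This is considerably lighter than your approach.

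One caution about your alternatives for producing compatible resolutions. The diagram-category approach works: $\C^I$ is Grothendieck, evaluation at each $i\in I$ has an exact left adjoint (the left Kan extension along $\{i\}\hookrightarrow I$ sends $X$ to the functor that is $X$ on the up-set of $i$ and $0$ elsewhere), so injectives in $\C^I$ evaluate to injectives in $\C$, and then Proposition~\ref{prop_noeth}(1) finishes. The ``inductive, degree-by-degree'' alternative is more delicate than you suggest: extending transition maps along injective envelopes is non-canonical, so the cocycle condition $\phi_{k,j}\phi_{j,i}=\phi_{k,i}$ need not hold, and for a general directed $I$ one cannot simply choose extensions compatibly without an auxiliary argument. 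If you want to avoid the Tohoku citation, commit to the diagram-category route.
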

\begin{proof}
(1) Let $Y$ be a sub-object of $X$ such that $X/Y\in\T$. For any morphism $\phi:X/Y\to M$,  $\phi(X/Y)\leq \t_\tau(M)\leq M$ and so
$\hom_\C(X/Y,\t_\tau(M))\cong\hom_\C(X/Y,M)$. Furthermore, there is an injective map 
\begin{equation}\label{inj_homo_induced}-\circ p_Y:\hom_\C(X/Y,\t_\tau(M))(\cong \hom_\C(X/Y,M))\longrightarrow \hom_\C(X,\t_\tau(M))\end{equation}
where $p_Y:X\to X/Y$ is the canonical projection. By the universal property of the direct limit, there is a unique map $\Phi$, making the following diagrams commutative, whenever $Y_1\leq Y_2\leq X$ and $X/Y_1\in\T$:
$$\xymatrix{\hom_\C(X/Y_1,M)\ar@/_-15pt/[rrrd]\ar@/_-15pt/[rd]|{-\circ p_{Y_1}}\\
&\hom_\C(X,\t_\tau(M))&&\varinjlim_Y\hom_\C(X/Y,M)\, .\ar[ll]|{\exists!\, \Phi}\\
\hom_\C(X/Y_2,M)\ar[uu]\ar@/_15pt/[ru]|{-\circ p_{Y_2}}\ar@/_15pt/[rrru]}$$
Now, $\Phi$ is injective by the injectivity of the maps described in \eqref{inj_homo_induced} and the commutativity of the above diagram; furthermore, one can show that $\Phi$ is surjective as follows: an element $\phi\in \hom_\C(X,\t_\tau(M))$ belongs to the image of $\Phi$ if and only if there exists $Y\leq X$ such that $X/Y\in \T$ and there is a morphism $\psi:X/Y\to M$ such that $\phi=\psi p_Y$. Given $\phi\in \hom_\C(X,\t_\tau(M))$, we easily get that $\phi(X)\in \T$ and so, letting $Y=\ker(\phi)$, we have that $X/Y\in \T$. Furthermore,  there is an induced (mono)morphism $\psi:X/Y\to \t_\tau(X)$ such that $\phi=\psi p_Y$, as desired.

\smallskip\noindent
(2) According to \cite[Proposition 3.6.2]{Grothendieck}, it suffices to verify that
\begin{enumerate}[\rm (a)]
\item ${\rm R}^0\t_\tau\cong\t_\tau$ commutes with direct limits;
\item $\varinjlim_\Lambda E_\alpha$ is $\t_\tau$-acyclic (i.e., $\Gamma_\tau^n(\varinjlim_\Lambda E_\alpha)=0$ for all $n>0$) for any directed system $(E_\alpha,\phi_{\beta,\alpha})_{ \Lambda}$ of injective objects. 
\end{enumerate}
Let $(M_\alpha,\phi_{\beta,\alpha}:M_\alpha\to M_\beta)_{\alpha\leq \beta\in \Lambda}$ be a directed system in $\C$, over a directed set $\Lambda$, denote by $M$ the direct limit $\varinjlim_{\Lambda}M_\alpha$ and denote by $\phi_\alpha:M_{\alpha}\to M$ the canonical maps of the direct limit. We have to verify that 
$\t_\tau( M)=\varinjlim_\Lambda\t_\tau(M_{\alpha})$. It is not difficult to see that $\t_\tau(M)\geq \varinjlim_\Lambda\t_\tau(M_{\alpha})$ (since direct limit of torsion objects is torsion). For the other inclusion we have to show that, for any Noetherian suboject $N\leq \t_\tau(M)$, there is $\beta\in\Lambda$ such that $N\leq \phi_{\beta}(\t_\tau(M_\beta))$. Indeed, since $M=\bigcup_{\Lambda}\phi_\alpha(M)$, there exists $\alpha$ such that $N\leq \phi_\alpha(M)$, thus there exists $N'\leq M_\alpha$ such that $\phi_\alpha(N')=N$. Furthermore, since $N$ is torsion, $N'/(N'\cap \ker(\phi_\alpha))$ is torsion as well and, since $\ker(\phi_\alpha)=\bigcup_{\beta \geq \alpha}\ker(\phi_{\beta,\alpha})$ and $N'\cap \ker(\phi_\alpha$ is Noetherian, there exists $\beta$ such that $N'\cap \ker(\phi_\alpha)=N'\cap\ker(\phi_{\beta,\alpha})$. By these computation,  $N''=\phi_{\beta,\alpha}(N')\leq \t_\tau(M_\beta)$ such that $N=\phi_\beta(N'')\leq \phi_\beta(\t_\tau(M_\beta))$.\\
Part (b) follows by Proposition \ref{prop_noeth} (1), and the fact that injective objects are $F$-acyclic for any left exact functor $F$.
\end{proof}

%To the best of the author's knowledge, it remains unknown whether the above lemma still holds if instead of the local Noetherianity of $\C$ one just assumes that $\C/\T$ is locally Noetherian\NB. Nevertheless, this is known to be true in case $\C$ is the category of modules over a commutative ring.
%
%\smallskip
%In the following\NB two lemmas we analyze some consequence of the hypotheses (Hyp.1) and (Hyp.2) we introduced above on the computation of Gabriel dimension.

\begin{lemma}\label{Gdimloc<}
Let $\C$ be a Grothendieck category satisfying (Hyp.1) and (Hyp.2), let $\bar\pi\in\Sp(\C)$, let $\tau=(\T,\F)\in\tors(\C)$ and let $C$ be a cocritical object such that $E(C)\cong E(\bar\pi)$.\\ If $-1<\Gdim_\tau(C)<\infty$, then $\Gdim_\tau(\l_{\bar\pi}(C)/C)<\Gdim_\tau(C)$.
\end{lemma}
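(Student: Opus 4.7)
The plan is to produce a contradiction by a direct analog of the arguments behind Lemma \ref{TTK<Gdim} and Corollary \ref{coroTTK}. First, since $\Gdim_\tau(C)>-1$, the object $E(C)\cong E(\bar\pi)$ is not $\tau$-torsion; by Hyp.1 and Lemma \ref{split_inj_lemma}, $E(C)$ splits off its $\tau$-torsion part, and indecomposability of $E(C)$ forces $\t_\tau(E(C))=0$. Hence $E(C)$, and so also $C$, is $\tau$-torsion free. Proposition \ref{loc_esp}(4) applied to $\bar\pi$ then identifies
\[\l_{\bar\pi}(C)/C\;\cong\;\tor_{\bar\pi}(E(C)/C),\]
so the task reduces to bounding the $\tau$-Gabriel dimension of this $\bar\pi$-torsion object. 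Using Hyp.2 together with Lemma \ref{pre1}(5), I may assume $C$ Noetherian, so $\Gdim_\tau(C)=\beta+1$ is a successor ordinal.

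Suppose for contradiction that $\Gdim_\tau(\tor_{\bar\pi}(E(C)/C))\geq\beta+1$. Then $\Q_\tau(\tor_{\bar\pi}(E(C)/C))$ lies outside $(\C/\T)_\beta$, and Lemma \ref{pre1}(4)--(5) applied in $\C/\T$ yields a $\beta$-cocritical subquotient. Pulling this back across $\Q_\tau\dashv\S_\tau$ (using the correspondence between subobjects of $\Q_\tau(X)$ and $\tau$-closed subobjects of $\l_\tau(X)$) produces a subquotient $P=\tilde P_1/\tilde P_0$ of $\tor_{\bar\pi}(E(C)/C)$ with $C\leq\tilde P_0<\tilde P_1\leq E(C)$ such that $\Q_\tau(P)$ is $\beta$-cocritical; in particular $P$ is $\bar\pi$-torsion, and $\Gdim_\tau(E(P))=\Gdim_\tau(P)=\beta+1$, the last equality following from essentiality of $\Q_\tau(P)\hookrightarrow\Q_\tau(E(P))$ combined with the preservation of Gabriel dimension under essential extensions in the stable category $\C/\T$. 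Composing the projection $\tilde P_1\twoheadrightarrow P$ with $P\hookrightarrow E(P)$ and extending by injectivity of $E(P)$ along $\tilde P_1\hookrightarrow E(C)$ produces a morphism $\tilde\phi\colon E(C)\to E(P)$ whose restriction to $\tilde P_1$ is the original composition.

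The proof then concludes by a dichotomy on whether $E(P)\cong E(C)$. If $E(P)\not\cong E(C)$, then $\bar\pi$ and the prime cogenerated by $E(P)$ are distinct prime torsion theories with injective cogenerators lying in $\F$ and with equal positive $\tau$-Gabriel dimension $\beta+1$; Lemma \ref{TTK<Gdim} then forces $\hom_\C(E(C),E(P))=0$, so $\tilde\phi=0$, the composition $\tilde P_1\twoheadrightarrow P\hookrightarrow E(P)$ vanishes, and $P=0$, contradicting $\Q_\tau(P)\neq 0$. If instead $E(P)\cong E(C)\cong E(\bar\pi)$, then the inclusion $P\hookrightarrow E(P)\cong E(\bar\pi)$ is a nonzero morphism from $P$ to $E(\bar\pi)$, contradicting $P\in\T_{\bar\pi}$. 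Either way we obtain a contradiction, so $\Gdim_\tau(\l_{\bar\pi}(C)/C)\leq\beta<\Gdim_\tau(C)$, as required. The main obstacle is the bookkeeping in the second paragraph: lifting a $\beta$-cocritical subquotient of $\Q_\tau(\tor_{\bar\pi}(E(C)/C))$ back to a subquotient $P$ of $\tor_{\bar\pi}(E(C)/C)$ in $\C$ with $\Gdim_\tau(E(P))=\beta+1$, which requires careful use of the subobject correspondence for $\Q_\tau\dashv\S_\tau$, of the fact that $\Q_\tau$ preserves essentiality on $\tau$-torsion-free objects, and of inheritance of stability by $\C/\T$ in order to apply Lemma \ref{TTK<Gdim} with the equal-dimension hypothesis.
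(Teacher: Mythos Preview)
Your argument is correct and takes a genuinely different route from the paper's. The paper first reduces to the case $\Gdim_\tau(C)=0$ by replacing $\tau$ with $\tau\circ\tau_n$ (Lemma~\ref{rel_circ_gab}), so that the goal becomes $\l_{\bar\pi}(C)/C\in\T$; it then uses (Hyp.2) to decompose an injective cogenerator of $\tau$ as $E\cong\bigoplus_i E(\pi_i)^{(\alpha_i)}$ (Proposition~\ref{prop_noeth}) and, via Corollary~\ref{coro_noeth}, reduces to checking $\hom_\C(\l_{\bar\pi}(C)/C,E(\pi_i))=0$ for each $i$, handling the cases $\pi_i=\bar\pi$ and $\pi_i\neq\bar\pi$ separately, the latter by Corollary~\ref{coroTTK}. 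You instead stay at level $\beta+1$ and manufacture a single indecomposable target $E(P)$ from a hypothetical bad subquotient, then run the same dichotomy. Both proofs ultimately rest on Lemma~\ref{TTK<Gdim}/Corollary~\ref{coroTTK}; the paper's reduction to dimension~$0$ together with the Matlis-type decomposition makes the endgame shorter and sidesteps the subobject-lifting bookkeeping you flag as the main obstacle. On the other hand, your argument seems not to need (Hyp.2) in any essential way: the step ``assume $C$ Noetherian'' is unnecessary since $-1<\Gdim_\tau(C)<\infty$ already forces a successor, and once you note $\Gdim_\tau(\l_{\bar\pi}(C)/C)\leq\Gdim_\tau(E(C))=\beta+1$, Lemma~\ref{pre1}(4) alone produces the $\beta$-cocritical subquotient. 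Two small remarks on your write-up: (i) to ensure $E(P)$ is indecomposable you should make explicit that $P$ is taken $\tau$-torsion free (by $\tau$-saturating $\tilde P_0$), whence uniformity of $\Q_\tau(P)$ lifts to $P$; (ii) the equality $\Gdim_\tau(E(P))=\beta+1$ follows already from stability of $\C$ (since $P$ lies in the torsion class of $\tau\circ\tau_{\beta+1}$, which is closed under injective envelopes by (Hyp.1)), so you need not invoke inheritance of stability by $\C/\T$---though that inheritance does hold.
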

\begin{proof}
By hypothesis $\Gdim_\tau(C)>-1$. Furthermore, if $\Gdim_\tau(C)=n+1$ for some $n\in \N$, we can denote by $\tau_n\in\tors(\C/\T)$ the torsion theory whose torsion class is $(\C/\T)_n$. Then, $\Gdim_{\tau\circ\tau_n}(C)=0$. Thus, there is no loss of generality in assuming that $\Gdim_{\tau}(C)=0$ (otherwise substitute $\tau$ by $\tau\circ\tau_n$ and then use part (2) of Lemma \ref{rel_circ_gab}). Now, if $\Gdim_\tau(C)=0$, we have to show that $\l_{\bar\pi}(C)/C\in \T$. By Proposition \ref{prop_noeth}, there exist a set $I$, a family of prime torsion theories $\{\pi_i=(\T_i,\F_i):i\in I\}\subseteq \Sp(\C)$ and a family of non-trivial cardinals $\{\alpha_i:i\in I\}$ such that $E\cong \bigoplus_{i\in I}E(\pi_i)^{(\alpha_i)}$. In view of Corollary \ref{coro_noeth}, $\T=\bigcap_{i\in I}\T_i$ and so we reduced to prove that 
\begin{equation}\label{claim)()(}\hom_\C(\l_{\bar\pi}(C)/C,E(\pi_i))=0\,,\end{equation}
for all $i\in I$. Let $i\in I$, if $\pi_i=\bar \pi$, then $\l_{\bar \pi}(C)/C\cong \t_{\bar\pi}(E(\bar \pi)/C)$ is $\pi_i$-torsion by construction, so \eqref{claim)()(} follows. On the other hand, if $\pi_i\neq\bar \pi$, suppose looking for a contradiction, that $\hom_\C(\l_{\bar \pi}(C)/C,E(\pi_i))\neq 0$ which implies $\hom_\C(\l_{\bar \pi}(C),E(\pi_i))\neq 0$ which, by the injectivity of $E(\pi_i)$, implies $\hom_\C(E(\bar \pi),E(\pi_i))\neq 0$. By Corollary \ref{coroTTK}, $0=\Gdim_\tau(E(\bar\pi))>\Gdim_\tau(E(\pi_i))$, equivalently, $E(\pi_i)\in \T$, that is \linebreak $\hom_\C(E(\pi_i),E)=0$, which is clearly a contradiction. 
\end{proof}

The following theorem is an improved version of \cite[Proposition 2.4]{Golan-Raynaud}.

\begin{theorem}\label{vanish_loc_co_hom}
Let $\C$ be a Grothendieck category satisfying (Hyp.1), (Hyp.2) and (Hyp.3), let $\tau\in\tors(\C)$ and let $X\in\C$. Then, $\Gamma_\tau^n(X)\neq 0$ implies $\Gdim_\tau(X)+2\geq n$. 
\end{theorem}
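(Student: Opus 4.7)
The plan is transfinite induction on $\alpha=\Gdim_\tau(X)$; the statement is vacuous when $\alpha=\infty$, so I assume $\alpha$ is an ordinal. Using Hyp.2 and Proposition~\ref{comm_dir}(2), I first reduce to the case where $X$ is Noetherian: writing $X=\varinjlim N_i$ as a directed colimit of Noetherian subobjects, each $N_i$ has $\Gdim_\tau(N_i)\leq\alpha$ by Lemma~\ref{pre1}(2), and local cohomology commutes with the colimit. The base case $\alpha=-1$ is then immediate from Lemma~\ref{coho1}(1).

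If $\alpha$ is a limit ordinal, every Noetherian subobject $N_i$ of $X$ satisfies $\Gdim_\tau(N_i)<\alpha$ (the $\tau$-dimension of a Noetherian object is a successor ordinal by Lemma~\ref{pre1}(5)), so the inductive hypothesis kills each $\Gamma_\tau^n(N_i)$ for $n>\alpha+2$ and the colimit argument finishes this case. For the successor case $\alpha=\beta+1$, Lemma~\ref{pre1}(5) provides, for Noetherian $X$, a finite filtration whose subquotients are cocritical and have $\tau$-dimension at most $\alpha$; the standard long exact sequence argument together with the inductive hypothesis applied to those subquotients of $\tau$-dimension strictly less than $\alpha$ reduces the problem to $X=C$ cocritical with $\Gdim_\tau(C)=\alpha$. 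Write $E(C)\cong E(\pi)$ with $\pi\in\Sp(\C)$; since $\alpha\geq 0$, $C$ lies in $\F$, and by Hyp.1 so does $E(\pi)$.

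Consider the short exact sequence
\begin{equation*}
0\longrightarrow C\longrightarrow \l_\pi(C)\longrightarrow T\longrightarrow 0,\qquad T=\l_\pi(C)/C.
\end{equation*}
Lemma~\ref{Gdimloc<} provides the strict descent $\Gdim_\tau(T)<\alpha$, so the inductive hypothesis forces $\Gamma_\tau^m(T)=0$ for every $m\geq\alpha+2$, and the long exact sequence then yields the isomorphism $\Gamma_\tau^n(C)\cong\Gamma_\tau^n(\l_\pi(C))$ for all $n\geq\alpha+3$.

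The main obstacle is to show $\Gamma_\tau^n(\l_\pi(C))=0$ for $n>\alpha+2$. The naive route through $0\to\l_\pi(C)\to E(\pi)\to F\to 0$ with $F=E(\pi)/\l_\pi(C)$ does not immediately deliver a strict drop in $\tau$-Gabriel dimension for $F$: simple commutative examples (e.g.\ $R=k[x]$ with $C=R/\mathfrak{m}$) show that $\Gdim_\tau(F)$ can equal $\Gdim_\tau(C)$. I would instead exploit Hyp.3 to build a minimal injective resolution of $\l_\pi(C)$: because $\l_\pi$ is exact, iterating the above sequence forces every cokernel to remain $\pi$-local, so the indecomposable summands $E(\pi')$ occurring in the terms of the resolution all satisfy $\pi\preceq\pi'$. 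Corollary~\ref{coroTTK} then forces each such summand with $E(\pi')\in\F$ and $\pi'\succ\pi$ strict to satisfy $\Gdim_\tau(E(\pi'))>\alpha$, so only summands with $\pi'=\pi$ (which are killed by $\t_\tau$ since $E(\pi)\in\F$) or with $E(\pi')\in\T$ contribute to the $\tau$-torsion part of the resolution; a secondary application of Lemma~\ref{Gdimloc<} to the cocritical subsections appearing in this torsion part produces the required vanishing. The $+2$ in the bound corresponds precisely to the two cohomological shifts used here, one from each of the short exact sequences $C\to\l_\pi(C)\to T$ and $\l_\pi(C)\to E(\pi)\to F$.
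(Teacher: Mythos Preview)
Your reductions (to Noetherian objects via Proposition~\ref{comm_dir}, then to cocritical ones via Lemma~\ref{pre1}(5), then induction on $\Gdim_\tau$) are exactly those of the paper. The gap is in the treatment of $\l_\pi(C)$.

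The paper does not struggle with $\Gamma_\tau^n(\l_\pi(C))$; it kills it in one line for \emph{every} $n\geq 0$. Since $\l_\pi(C)$ is $\pi$-local and $\pi$ is exact by (Hyp.3), Corollary~\ref{vanish_ex} gives $\Gamma_\pi^n(\l_\pi(C))=0$ for all $n$. Because $E(\pi)\in\F$ we have $\tau\preceq\pi$, and Corollary~\ref{1leq2vanish} then transfers this vanishing to $\tau$, so $\Gamma_\tau^n(\l_\pi(C))=0$ for all $n$. Feeding this into the long exact sequence of $0\to C\to\l_\pi(C)\to T\to 0$ produces the \emph{shift} $\Gamma_\tau^n(C)\cong\Gamma_\tau^{n-1}(T)$ for all $n\geq 1$, and the inductive hypothesis on $T$ (with $\Gdim_\tau(T)\leq\alpha-1$ from Lemma~\ref{Gdimloc<}) closes the argument immediately.

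Your alternative route via a minimal injective resolution of $\l_\pi(C)$ and Corollary~\ref{coroTTK} is both more complicated and not carried to a correct conclusion. The observation that successive cokernels stay $\pi$-local (hence every indecomposable summand $E(\pi')$ of each $E^n$ satisfies $\pi\preceq\pi'$) is correct, but the relevant consequence is simply $\tau\preceq\pi\preceq\pi'$, so every such $E(\pi')$ lies in $\F$ and therefore $\t_\tau(E^n)=0$ for all $n$. There are no summands in $\T$ at all, so the ``secondary application of Lemma~\ref{Gdimloc<} to the cocritical subsections appearing in this torsion part'' is applied to the zero object; the detour through Corollary~\ref{coroTTK} and the $\tau$-Gabriel dimensions of the summands is a red herring. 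In effect you are re-deriving, by hand and only in this special case, exactly what Corollary~\ref{1leq2vanish} already provides.

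Finally, your accounting of the ``$+2$'' is off: the paper uses only \emph{one} cohomological shift (coming from $C\to\l_\pi(C)\to T$), not two. The extra unit in the bound is an artifact of the base case $d=-1$, where Lemma~\ref{coho1}(1) gives vanishing for $n>0$ while the stated bound only asserts $n\leq 1$.
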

\begin{proof}
By (Hyp.3), $X$ is the direct union of its Noetherian sub-objects. By Proposition \ref{comm_dir}, the vanishing of $\tau$-local cohomologies on Noetherian objects implies their vanishing on $X$. Thus we can suppose $X$ to be Noetherian. By Lemma \ref{pre1}(5), there exist sub-objects $0=Y_0\leq Y_1\leq \dots\leq Y_k=X$ such that $Y_{i}/Y_{i-1}$ is cocritical for all $i=1,\dots,k$. One can verify by induction on $k$ that the vanishing of the $\tau$-local cohomology functors on all the factors of the form $Y_{i}/Y_{i-1}$ implies their vanishing on $X$. Thus we may suppose $X$ to be cocritical, in particular $E(X)\cong E(\pi)$ for some $\pi\in\Sp(\C)$. 

If $\Gdim_\tau(X)$ is not finite, then there is nothing to prove, therefore we suppose $\Gdim_\tau(X)=d<\infty$ and we proceed  by induction on $d$. If $d=-1$, then $\Gamma^n_\tau(X)=0$ for all $n>0$, by Lemma \ref{coho1}(1). Thus, $\Gamma^n_\tau(X)\neq0$ implies $n=0\leq \Gdim_\tau(X)+2=-1+2=1$. If $d>-1$, consider the following long exact sequence:
\begin{align*}
0\to \Gamma_\tau^0(X)&\to \Gamma_\tau^0(\l_\pi(X))\to \Gamma_\tau^0(\l_\pi(X)/X)\to \Gamma_\tau^1(X)\to \Gamma_\tau^1(\l_\pi(X))\to \\ 
&\to \Gamma_\tau^1(\l_\pi(X)/X)\to \dots \to \Gamma_\tau^n(X)\to \Gamma_\tau^n(\l_\pi(X))\to \Gamma_\tau^n(\l_\pi(X)/X)\to \Gamma_\tau^{n+1}(X)\to \dots
\end{align*}
Notice that $\Gamma_\tau^0(X)= \Gamma_\tau^0(\l_\pi(X))=0$, since we supposed that $d>-1$ and so, by stability, $X$ is $\tau$-torsion free. Furthermore, using (Hyp.3) and Corollary \ref{vanish_ex}, one can show that $\Gamma_\pi^n(\l_\pi(X))=0$ for all $n\in\N$. Using again that $X$ (and so $E(\pi)$) is $\tau$-torsion free, we get $\tau\preceq \pi$ and so we can apply Corollary \ref{1leq2vanish} to show that $\Gamma_\tau^n(\l_\pi(X))=0$
for all $n>0$. One obtains the following isomorphisms:
$$\Gamma_\tau^n(X)=\Gamma_\tau^{n-1}(\l_\pi(X)/X)\,,\ \ \ \forall n\geq1\,.$$
By Lemma \ref{Gdimloc<}, $\Gdim_\tau(\l_\pi(X)/X)<d$ and so we can apply our inductive hypothesis to show that $\Gamma_\tau^{n}(\l_\pi(X)/X)=0$ for all $n>\Gdim_\tau (\l_\pi(X)/X)+2$. Thus, if $\Gamma_\tau^n(X)\neq 0$ for some $n>0$, then $\Gamma_\tau^{n-1}(\l_\pi(X)/X)\neq 0$ and so $n-1\leq\Gdim_\tau (\l_\pi(X)/X)+2$, that is, $n\leq \Gdim_\tau(\l_\pi(X)/X)+3\leq \Gdim_\tau(X)+2$.
\end{proof}

\subsection{Application: exactness of products in the localization}

In the definition of Grothendieck category one assumes direct limits to be exact but no assumption is required on the exactness of products. We will see in the last part of this paper that knowing that a Grothendieck category has ``almost exact products" has very nice consequences on its derived category. In the following definition we precise the meaning of ``almost exact products":

\begin{definition}\label{quasi_exact_prod}
Let $\C$ be a Grothendieck category. For a non-negative integer $n$, $\C$ is said to satisfy the axiom (Ab.4$^*$)-$k$ if, for any set $I$ and any collection of objects $\{X_i\}_{i\in I}$,
$$\prod_{i\in I}{}^{(n)}X_i=0 \ \ \ \forall n>k\, ,$$
where $\prod_{i\in I}^{(n)}(-)$ is the $n$-th derived functor of the product $\prod:\C^{I}\to \C$. We say that $\C$ is (Ab.4$^*$) if it is (Ab.4$^*$)-$0$.
\end{definition}
In general, there is no reason for a Grothendieck category to be (Ab.4$^*$)-$k$ for any $k$. Anyway, categories of modules are (Ab.4$^*$) and it is a classical result of Gabriel and Popescu that any Grothendieck category is a quotient category of a category of modules. Thus, it seems natural to ask for sufficient conditions on a torsion theory $\tau=(\T,\F)$ in an (Ab.4$^*$) Grothendieck category $\C$ that ensure that the quotient category $\C/\T$ is (Ab.4$^*$)-$k$ for some $k\in\N$. We give two such conditions in the following lemma, a deeper criterion is given in Theorem \ref{Ab4*_quot}.%, which uses the full force of the cohomological techniques developed in the previous subsection.

\begin{lemma}
Let $\C$ be an (Ab.4$^*$) Grothendieck category, let $\tau=(\T,\F)\in\tors(\C)$ and suppose one of the following conditions holds:
\begin{enumerate}[\rm (1)]
\item $\T$ is closed under taking products (in this case $\tau$ is said to be a TTF);
\item $\tau$ is exact.
\end{enumerate}
Then, the quotient category $\C/\T$ is still (Ab.4$^*$).
\end{lemma}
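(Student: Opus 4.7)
The plan is to verify directly that the product functor on $\C/\T$ is exact (this is what (Ab.4$^*$), i.e.\ (Ab.4$^*$)-$0$, means). The key input is Corollary \ref{prod}, which gives
$$\prod_{i\in I} X_i \;\cong\; \Q_\tau\!\left(\prod_{i\in I}\S_\tau(X_i)\right)$$
for any family $\{X_i\}_{i\in I}\subseteq\C/\T$. So I will trace a short exact sequence of families through $\S_\tau$, then through the ambient product in $\C$, then through $\Q_\tau$, and isolate a single obstruction term that the two hypotheses are designed to kill.

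First I would fix, for each $i\in I$, a short exact sequence $0\to X_i\to Y_i\to Z_i\to 0$ in $\C/\T$ and apply $\S_\tau$. Since $\S_\tau$ is only left exact in general, I get an exact sequence in $\C$ of the form
$$0\longrightarrow \S_\tau X_i\longrightarrow \S_\tau Y_i\longrightarrow \S_\tau Z_i\longrightarrow C_i\longrightarrow 0,$$
where $C_i=\coker(\S_\tau Y_i\to \S_\tau Z_i)$. Applying the exact functor $\Q_\tau$ and using $\Q_\tau\S_\tau\cong \mathrm{id}$ together with the exactness of the original sequence, we get $\Q_\tau(C_i)=0$, i.e., $C_i\in\T$ for every $i\in I$.

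Next I split the $4$-term exact sequence into two short exact sequences via the image $K_i$ of $\S_\tau Y_i\to\S_\tau Z_i$, and apply $\prod_I$ to both. Since $\C$ is (Ab.4$^*$), the product is exact in $\C$, so the two resulting sequences glue back to an exact sequence
$$0\longrightarrow \prod_I \S_\tau X_i\longrightarrow \prod_I\S_\tau Y_i\longrightarrow \prod_I\S_\tau Z_i\longrightarrow \prod_I C_i\longrightarrow 0.$$
Applying the exact functor $\Q_\tau$ and using Corollary \ref{prod} on the first three terms, I obtain the exact sequence
$$0\longrightarrow \prod_I X_i\longrightarrow \prod_I Y_i\longrightarrow \prod_I Z_i\longrightarrow \Q_\tau\!\left(\prod_I C_i\right)\longrightarrow 0$$
in $\C/\T$. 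Hence the right-most map is surjective, and therefore $\prod$ is exact in $\C/\T$, precisely when the cokernel term vanishes, that is, when $\prod_I C_i\in\T$.

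Finally I would dispatch the two cases. Under (1), $\T$ is closed under products and each $C_i\in\T$, so $\prod_I C_i\in\T$ and the obstruction is killed. Under (2), the torsion theory is exact, so by the very definition of ``exact'' in this paper $\S_\tau$ is an exact functor; this forces $C_i=0$ for every $i$, and a fortiori $\prod_I C_i=0$. I do not expect a genuine obstacle here: the only subtle point is to notice that (Ab.4$^*$) in $\C$ is needed exactly to propagate the $4$-term exactness through $\prod_I$, and that the two hypotheses are precisely tailored to annihilate the single cokernel $\prod_I C_i$ that $\S_\tau$'s failure of right exactness leaves behind.
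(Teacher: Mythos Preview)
Your proposal is correct and follows essentially the same route as the paper: apply $\S_\tau$ to each short exact sequence, identify the cokernel as a $\tau$-torsion object, use (Ab.4$^*$) in $\C$ to take products, apply $\Q_\tau$ and Corollary \ref{prod}, and then kill the residual term using hypothesis (1) or (2). Your write-up is in fact a bit more explicit than the paper's (you spell out the splitting into two short exact sequences and justify $C_i\in\T$ via $\Q_\tau\S_\tau\cong\mathrm{id}$), but the argument is the same.
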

\begin{proof}
Let $I$ be a set and for each $i\in I$ consider objects $A_i,$ $B_i$ and $C_i\in\C/\T$ such that
$$0\to A_i\to B_i\to C_i\to 0 \ \ \ \text{is exact in }\ \C/\T\, .$$
Hence, for all $i\in I$, one obtains exact sequences  $0\to \S_\tau(A_i)\to \S_\tau(B_i)\to \S_\tau(C_i)\to T_i\to 0$ in $\C$,
where $T_i\in \T$. Using the (Ab.4$^*$) property in $\C$ we get an exact sequence
$$0\to \prod_{i\in I}\S_\tau(A_i)\to \prod_{i\in I}\S_\tau(B_i)\to \prod_{i\in I}\S_\tau(C_i)\to \prod_{i\in I}T_i\to 0\,.$$
If $\tau$ is a TTF, then $ \prod_{i\in I}T_i$ is $\tau$-torsion and so we can apply $\Q_{\tau}$ to the above exact sequence obtaining the following short exact sequence
$$0\to \prod_{i\in I}A_i\to \prod_{i\in I}B_i\to \prod_{i\in I}C_i\to \Q_{\tau}(\prod_{i\in I}T_i)=0\, ,$$
by Corollary \ref{prod} and the exactness of $\Q_{\tau}$. On the other hand, if $\S_\tau$ is exact, we get $T_i=0$ for all $i\in I$ above and so again one can easily conclude.
\end{proof}

%\medskip
%Following \cite{GR}, we say that a ring $R$ is {\em right effective} if and only if it is right stable, right noetherian, and every element of $\Rsp$ is exact (i.e., $Q_\pi$ is an exact functor for every $\pi\in\Rsp$). By Proposition 17.1 of \cite{G} we see that, in the presence of the noetherian condition, this last condition is equivalent to the condition that every element $\pi$ of $\Rsp$ is perfect (i.e., $Q_{\Sigma_\pi}$ is naturally equivalent to $-\otimes_RR_\pi$, where $R_\pi=Q_{\Sigma_\pi}(R)$ is the localization of $R$ with the canonical ring structure). 
%
%
%Let $\xi$ denote the torsion theory such that $\T_\xi=\{0\}$ and by $\chi$ the torsion theory such that $\T_\chi=\mod R$. The {\em Gabriel filtration} of $\mod R$ is a transfinite sequence $\{\tau_\alpha: \alpha \text{ an ordinal}\}$ of torsion theories defined as follows:
%\begin{enumerate}[--]
%\item $\tau_{-1}=\xi$;
%\item if $\alpha$ is not a limit ordinal, then $\tau_\alpha$ is the torsion theory such that $\T_{\tau_\alpha}$ is the smallest torsion class containing $\T_{\tau_{\alpha-1}}$ and all the $\tau_{\alpha-1}$-cocritical modules;
%\item if $\alpha$ is a limit ordinal $\tau_\alpha=\bigvee_{\beta<\alpha}\tau_\beta$.
%\end{enumerate}
%Note that since there is a set of torsion theories, the above sequence eventually stabilizes. If there exists an ordinal $\alpha$ such that $\tau_\alpha=\chi$, we say that $R$ {\em has Gabriel dimension} $\beta$ where $\beta$ is the smallest ordinal such that $\tau_\beta=\chi$.
%

\begin{theorem}\label{Ab4*_quot}
Let $\C$ be an (Ab.4$^*$) Grothendieck category which satisfies hypotheses (Hyp.1), (Hyp.2) and (Hyp.3), and let $\tau\in\tors (\C)$. If $\Gdim(\C/\T)=k<\infty$, then $\C/\T$ is (Ab.4$^*$)-$k+1$.
\end{theorem}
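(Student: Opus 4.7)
The plan is to reduce the computation of $\prod\nolimits^{(n)}_{i\in I}X_i$ in $\C/\T$ to a product of $\tau$-local cohomologies in $\C$, where both the $(Ab.4^*)$ hypothesis on $\C$ and Theorem \ref{vanish_loc_co_hom} become available. Given a family $\{X_i\}_{i\in I}\subseteq\C/\T$, I would fix injective resolutions $X_i\to E_i^\bullet$ in $\C/\T$, so that $\prod\nolimits^{(n)}_{i\in I}X_i=H^n\bigl(\prod\nolimits^{\C/\T}_{i}E_i^\bullet\bigr)$. Applying Corollary \ref{prod} degree by degree rewrites this product as $\Q_\tau\bigl(\prod\nolimits^{\C}_{i}\S_\tau(E_i^\bullet)\bigr)$, and the exactness of $\Q_\tau$ lets $H^n$ pass inside. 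Because $\Q_\tau$ is exact, its right adjoint $\S_\tau$ preserves injectives, so each $\S_\tau(E_i^n)$ is injective in $\C$ and $H^n(\S_\tau(E_i^\bullet))=R^n\S_\tau(X_i)$ by definition. Using that $\C$ is $(Ab.4^*)$, cohomology commutes with the product, yielding
$$\prod\nolimits^{(n)}_{i\in I}X_i \;\cong\; \Q_\tau\Bigl(\prod\nolimits^{\C}_{i}R^n\S_\tau(X_i)\Bigr).$$

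The second step is to rewrite $R^n\S_\tau$ in terms of $\tau$-local cohomology. Stability (Hyp.1) combined with Lemma \ref{split_inj_lemma} shows that $\loc_\tau I = I/\tor_\tau I$ is a $\tau$-local injective summand of any injective $I\in\C$; in particular $\Q_\tau$ sends injectives to injectives in $\C/\T$, so that applying $\Q_\tau$ to an injective resolution of $X\in\C$ produces an injective resolution of $\Q_\tau(X)$ in $\C/\T$. This gives $R^n\S_\tau\circ\Q_\tau\cong R^n\loc_\tau$, and combining with Lemma \ref{coho1}(5) and the isomorphism $\Q_\tau\S_\tau\cong\id_{\C/\T}$ produces
$$R^n\S_\tau(Y)\;\cong\;R^n\loc_\tau(\S_\tau Y)\;\cong\;\Gamma^{n+1}_\tau(\S_\tau Y)\qquad\text{for all }n\geq 1\text{ and }Y\in\C/\T,$$
so that the previous identification becomes $\prod\nolimits^{(n)}_{i\in I}X_i\cong\Q_\tau\bigl(\prod\nolimits^{\C}_{i}\Gamma^{n+1}_\tau(\S_\tau X_i)\bigr)$ for every $n\geq 1$.

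To finish I would invoke Theorem \ref{vanish_loc_co_hom}, which is available precisely because $\C$ satisfies (Hyp.1), (Hyp.2) and (Hyp.3). The hypothesis $\Gdim(\C/\T)=k$ forces $\Gdim_\tau(\S_\tau X_i)=\Gdim(\Q_\tau\S_\tau X_i)=\Gdim(X_i)\leq k$ uniformly in $i$, and the theorem then guarantees $\Gamma^{m}_\tau(\S_\tau X_i)=0$ whenever $m>k+2$. Taking $m=n+1$, this means $\Gamma^{n+1}_\tau(\S_\tau X_i)=0$ for every $n>k+1$ and every $i\in I$, and the displayed formula gives $\prod\nolimits^{(n)}_{i\in I}X_i=0$ in that range, which is exactly the axiom (Ab.4$^*$)-$k+1$ for $\C/\T$.

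The step I expect to demand the most care is the identification $R^n\S_\tau\cong\Gamma^{n+1}_\tau\circ\S_\tau$, because it rests on the delicate interplay of injective objects under the adjunction $(\Q_\tau,\S_\tau)$; stability, via Lemma \ref{split_inj_lemma}, is the ingredient that makes both $\Q_\tau$ send injectives to injectives and $\loc_\tau$ preserve injectivity, and hence lets Lemma \ref{coho1}(5) be applied in the form needed. Once that identification is in place, the combination of $(Ab.4^*)$ on $\C$ with the vanishing Theorem \ref{vanish_loc_co_hom} is essentially formal.
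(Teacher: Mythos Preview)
Your argument is correct and follows essentially the same route as the paper: both reduce $\prod^{(n)}_{i\in I}X_i$ via Corollary~\ref{prod}, use $(Ab.4^*)$ in $\C$ to commute products with cohomology, identify the resulting terms with $\Gamma_\tau^{n+1}(\S_\tau X_i)$ through Lemma~\ref{coho1}(5), and finish with Theorem~\ref{vanish_loc_co_hom}. The only difference is the place where the injective resolution is taken: the paper resolves $\S_\tau(X_i)$ directly in $\C$, so that $H^n(\loc_\tau(E_i^\bullet))=R^n\loc_\tau(\S_\tau X_i)$ is immediate from the definition, whereas you resolve $X_i$ in $\C/\T$ and then need the extra identification $R^n\S_\tau\cong R^n\loc_\tau\circ\S_\tau$ (which, as you note, uses stability via $\Q_\tau$ preserving injectives). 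The paper's choice thus sidesteps precisely the step you flag as ``the most delicate'', but the substance of the two proofs is the same.
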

\begin{proof}
Let $\{X_i\}_{i\in I}$ be a family of objects in $\C/\T$. For all $i\in I$, choose an injective resolution $0\to \S_\tau (X_i)\to E_i^{\bullet}$ of $\S_\tau(X_i)$ in $\C$. Since $\Q_{\tau}$ is exact and sends injective objects to injective objects, the complex $\Q_{\tau}(E_i^\bullet)$ provides an injective resolution for $X_i$.  Thus, for all $n>k+1$,
\begin{align*}\prod_{i\in I}{}^{(n)}X_i&=H^n\left(\prod_{i\in I}\Q_\tau\left(E_{i}^{\bullet}\right)\right)=H^n\left(\Q_\tau\left(\prod_{i\in I}\l_\tau\left(E_{i}^{\bullet}\right)\right)\right)&\text{by Corollary \ref{prod}}\\
&=\Q_\tau\left(H^n\left(\prod_{i\in I}\l_\tau\left(E_{i}^{\bullet}\right)\right)\right)&\text{exact funct. commute with cohom.}\\
&=\Q_\tau\left(\prod_{i\in I}H^n\left(\l_\tau\left(E_{i}^{\bullet}\right)\right)\right)=\Q_\tau\left(\prod_{i\in I}{\rm R}\l^n_\tau\left(E_{i}^{\bullet}\right)\right)&\text{exact funct. commute with cohom.}\\
&=\Q_\tau\left(\prod_{i\in I}\Gamma^{n+1}_\tau\left(E_{i}^{\bullet}\right)\right)=0&\text{Lemma \ref{coho1} and Theorem \ref{vanish_loc_co_hom}.}
\end{align*}
\end{proof}

\begin{corollary}
Let $\C$ be an (Ab.4$^*$) Grothendieck category which satisfies hypotheses (Hyp.1), (Hyp.2) and (Hyp.3). If $\Gdim(\C)=k<\infty$, then $\C/\T$ is (Ab.4$^*$)-$k+1$ for all $\tau=(\T,\F)\in\tors(\C)$.
\end{corollary}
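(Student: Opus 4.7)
The plan is to derive this corollary directly from Theorem \ref{Ab4*_quot}. That theorem already gives the desired conclusion under the hypothesis $\Gdim(\C/\T) = k < \infty$, so the only thing I need to bridge is the passage from a bound on the Gabriel dimension of $\C$ to a bound on the Gabriel dimension of the quotient category $\C/\T$. Concretely, I plan to show the monotonicity statement
\[
\Gdim(\C/\T) \leq \Gdim(\C),
\]
and then apply the theorem verbatim (noting that the hypotheses (Hyp.1), (Hyp.2), (Hyp.3) and (Ab.4$^*$) are assumed on $\C$, not on $\C/\T$, so nothing further needs checking).

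For the monotonicity, I would use the observation already recorded in Subsection \ref{gab_cat_subs}: for any torsion theory $\tau = (\T,\F)$ on a Gabriel category, the inclusion $\Q_\tau(\C_\alpha) \subseteq (\C/\T)_\alpha$ holds for every ordinal $\alpha$. This is proved by transfinite induction on $\alpha$, with the essential ingredients being that $\Q_\tau$ is exact, sends cocritical objects either to zero or to $\Q_\tau$-cocritical objects in $\C/\T$, and commutes with coproducts and direct limits (so it preserves the closure operations that define each $(\C/\T)_\alpha$). Granting this, if $\Gdim(\C) = k$ then $\C = \C_k$, and since every object of $\C/\T$ is isomorphic to $\Q_\tau(X)$ for some $X \in \C = \C_k$, we obtain $\C/\T \subseteq (\C/\T)_k$, whence $\Gdim(\C/\T) \leq k$.

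Combining the inequality with Theorem \ref{Ab4*_quot} immediately yields that $\C/\T$ is (Ab.4$^*$)-$(k+1)$, proving the corollary. There is no real obstacle here: the content has been shouldered by Theorem \ref{vanish_loc_co_hom} and its consequence Theorem \ref{Ab4*_quot}. The mildest subtlety is simply to invoke the previously noted behaviour of the Gabriel filtration under the quotient functor, which makes the Gabriel dimension a monotone invariant with respect to the specialization ordering on $\tors(\C)$.
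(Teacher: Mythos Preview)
Your proposal is correct and matches the paper's intended argument: the paper states this corollary without proof immediately after Theorem~\ref{Ab4*_quot}, relying exactly on the inequality $\Gdim(\C/\T)\leq\Gdim(\C)$, which follows from the inclusion $\Q_\tau(\C_\alpha)\subseteq(\C/\T)_\alpha$ recorded in Subsection~\ref{gab_cat_subs}. Your observation that (Ab.4$^*$)-$(j+1)$ for $j=\Gdim(\C/\T)\leq k$ implies (Ab.4$^*$)-$(k+1)$ completes the deduction.
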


\section{Injective classes}\label{Sec_inj_class}
Let $\C$ be a Grothendieck category and let $\I$ be a class of objects of $\C$. Slightly generalizing the setting of \cite{CPS}, we say that a morphism $\phi:X\to Y$ in $\C$ is an {\em $\I$-monomorphism} if $$\hom_\C(\phi,K):\hom_\C(Y,K)\to \hom_\C(X,K)$$
is an epimorphism of Abelian groups for every $K\in\I$. We say that $\C$ has {\em enough $\I$-injectives} if every object $X$ admits an $\I$-monomorphism $X\to K$ for some $K\in \I$.

\begin{definition}\label{def.inj.class}{\rm \cite{CPS}}
A subclass $\mathcal I$ of a Grothendieck category $\C$ is an {\em injective class} (of $\C$) if it is closed under products and direct summands, and $\C$ has enough $\mathcal I$-injectives.
\end{definition}

The following lemma shows that a choice of the class of $\I$-monomorphisms is equivalent to the choice of an injective class.

\begin{lemma}
Let $\I$ be an injective class of a Grothendieck category $\C$. An object $X\in\C$ belongs to $\I$ if and only if $\hom_\C(-,X)$ sends $\I$-monomorphisms to epimorphisms of Abelian groups. 
\end{lemma}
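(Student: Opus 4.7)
The plan is to treat the two implications separately, with the forward direction being essentially tautological and the backward direction using the two defining closure properties of an injective class: the existence of enough $\I$-injectives and closure under direct summands.

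For the forward implication, suppose $X\in\I$. Given any $\I$-monomorphism $\phi\colon A\to B$, the definition of $\I$-monomorphism states that $\hom_\C(\phi,K)$ is an epimorphism for every $K\in\I$. Specializing to the case $K=X$ yields the claim. This is purely definitional and requires no further work.

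For the backward implication, assume that $\hom_\C(-,X)$ sends $\I$-monomorphisms to epimorphisms of abelian groups. Since $\I$ is an injective class, $\C$ has enough $\I$-injectives, so there exists an $\I$-monomorphism $\iota\colon X\to K$ with $K\in\I$. By the hypothesis applied to $\iota$, the induced map
\[
\hom_\C(\iota,X)\colon \hom_\C(K,X)\longrightarrow \hom_\C(X,X)
\]
is surjective. Choose a preimage $r\colon K\to X$ of $\id_X$; then $r\circ\iota=\id_X$, which exhibits $X$ as a direct summand of $K$. Since $K\in\I$ and $\I$ is closed under direct summands, we conclude $X\in\I$.

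There is no real obstacle here: the argument is the standard "retract trick" that characterizes injective objects of any flavor. The only point one should be careful about is that the hypothesis on $\hom_\C(-,X)$ must be applied to an $\I$-monomorphism whose source is $X$ itself; the existence of enough $\I$-injectives provides precisely such a morphism, and the closure of $\I$ under direct summands then delivers the conclusion.
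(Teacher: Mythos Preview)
Your proof is correct and follows essentially the same approach as the paper: the forward direction is dismissed as trivial, and the backward direction uses enough $\I$-injectives to get an $\I$-monomorphism $X\to K$ with $K\in\I$, then applies the hypothesis to split it and concludes by closure under direct summands.
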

\begin{proof}
Suppose that $\hom_\C(-,X)$ sends $\I$-monomorphisms to surjective morphisms. By definition of injective class, there is an $\I$-monomorphism $\varphi:X\to E$ for some $E\in \I$. Hence, $\hom_\C(\varphi,X):\hom_\C(E,X)\to \hom_\C(X,X)$ is surjective and so $X$ is a direct summand of $E$. Thus, $X$ belongs to $\I$. The converse is trivial. 
\end{proof}

\begin{definition}
Let $\C$ be a Grothendieck category and let $\I$ be an injective class. $\I$ is an {\em injective class of injectives} provided any object in $\I$ is an injective object. We denote by $\Inj(\C)$ the poset of all the injective classes of injectives in $\C$, where, given $\I$ and $\I'\in\Inj(\C)$
$$\I\preceq \I'\ \text{ if and only if } \ \I\subseteq \I'\,.$$ 
\end{definition}
\subsection{Examples}

Before proceeding further we give some examples of injective classes (not necessarily of injectives).
\begin{example}
Let $\C$ be a Grothendieck category. Then
\begin{enumerate}[\rm (1)]
\item $\I=0$ is an injective class. In this case every morphism is an $\I$-monomorphism;
\item $\I=\C$ is an injective class. In this case, a morphism is an $\I$-monomorphism if and only if it has a left inverse (it is a splitting monomorphism);
\item the class $\I$ of all injective objects is an injective class. With this choice, $\I$-monomorphisms are precisely the usual monomorphisms.
\end{enumerate}
\end{example}

Part (3) of the above example can be generalized as follows:

\begin{lemma}
Let $\C$ be a Grothendieck category and let $\I$ be an injective class in $\C$. Every $\I$-monomorphism is in particular a monomorphism if and only if $\I$ contains the class of injectives.
\end{lemma}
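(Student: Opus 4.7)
The statement to prove is an ``if and only if'', so I would split the proof into two directions, both of which should be short given the definitions and the characterization lemma already established.

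For the ``if'' direction, assume $\I$ contains every injective object, and let $\phi\colon X\to Y$ be an $\I$-monomorphism. The plan is to use the injective envelope $E(X)$ of $X$, which lies in $\I$ by assumption. Then $\hom_\C(\phi,E(X))\colon\hom_\C(Y,E(X))\to\hom_\C(X,E(X))$ is surjective, so the canonical embedding $\iota\colon X\hookrightarrow E(X)$ factors as $\iota=\psi\phi$ for some $\psi\colon Y\to E(X)$. Since $\iota$ is a monomorphism, so is $\phi$.

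For the ``only if'' direction, assume every $\I$-monomorphism is a (genuine) monomorphism, and let $E$ be an arbitrary injective object of $\C$. The plan is to exploit the defining property that $\C$ has enough $\I$-injectives: there exists an $\I$-monomorphism $\phi\colon E\to K$ with $K\in\I$. By hypothesis $\phi$ is a monomorphism, and since $E$ is injective, the identity $\id_E$ extends along $\phi$, so $\phi$ admits a left inverse. Hence $E$ is a direct summand of $K$, and because $\I$ is closed under direct summands (part of the definition of an injective class), $E\in\I$.

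The main obstacle, if any, is purely cosmetic: making sure both directions use only the axioms actually stated in Definition~\ref{def.inj.class} (closure under products and direct summands, plus enough $\I$-injectives). In particular, the ``only if'' direction needs the closure under direct summands, while the ``if'' direction only needs that $\C$ has injective envelopes, which is automatic in a Grothendieck category. No induction, no transfinite argument, no use of torsion theory is required.
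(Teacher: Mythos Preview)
Your proof is correct. The paper leaves this lemma to the reader, so there is no original proof to compare against; your argument is exactly the kind of short verification intended, and the ``only if'' direction mirrors the proof of the preceding characterization lemma in the paper.
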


The proof of the above lemma is left to the reader.

\begin{example}
Let $\C$ be a Grothendieck category and let $\tau=(\T,\F)\in\tors(\C)$,  then $\F$ is an injective class. In fact, $\F$ is  closed under taking products and direct summands. Furthermore, given an object $X$ and $F\in\F$, apply the functor $\hom_\C(-,F)$ to the exact sequence $0\to \t_\tau(X)\to X\to X/\t_\tau(X)\to 0$, to obtain the following exact sequence of Abelian groups:
$$\hom_\C(X/\t_\tau(X),F)\to\hom_\C(X,F)\to \hom_\C(\t_\tau(X),F)\, .$$
Clearly $\hom_\C(\t_\tau(X),F)=0$ and so the canonical projection $X\to X/\t_\tau(X)$ is an $\F$-monomorphism of $X$ into an element of $\F$.
\end{example}

\begin{example}\label{ntf}
Let $R$ be a ring and let $I\leq R$ be a two-sided ideal. We claim that the class 
$$\I=\{M\in \lmod R:IM=0\}$$  
is an injective class in $\lmod R$. In fact, $\I$ is closed under products and direct summands. Furthermore, given a left $R$-module $M$, we can always consider the canonical projection $p:M\to M/IM$, where $M/IM\in\I$. We have to show that $p$ is an $\I$-monomorphism. Let $N\in \I$ and let $\phi:M\to N$ be a morphism. Given $x\in IM$, there exists $y\in M$ and $i\in I$ such that $iy=x$ and so, $\phi(x)=\phi(iy)=i\phi(y)=0$ as $IN=0$. Thus, $\phi$ factors through $p$ as desired.
%
%We remark that the subclass $\mod {\Z(p)}$ of $\mod \Z$ is not a torsion--free class as it is not closed under extensions.
\end{example}

We remark that an injective class does not need to satisfy any reasonable closure property but closure under products and direct summands (that are assumed in the definition). In fact, an example of an injective class that is closed nor under submodules or infinite direct sums  is given by the class of all injective modules (at least in the non-Noetherian case). An  injective class that is not closed under extensions is described in  Example \ref{ntf}.
 For further examples we refer to \cite{CPS}.

\subsection{Injective classes vs torsion theories}

\begin{definition}
Let $\C$ be a Grothendieck category and let $\tau=(\T,\F)\in\tors(\C)$. We define the following subclass of $\C$:
$$\I_\tau=\{\text{injective objects in $\F$}\}\,.$$
\end{definition}

It is possible to give quite an explicit characterization of those morphisms that are $\I_\tau$-monomorphism:
\begin{lemma}\label{i-mono}
Let $\C$ be a Grothendieck category, let $\tau=(\T,\F)\in\tors(\C)$ and let $\phi:X\to Y$ be a morphism in $\C$. The following are equivalent:
\begin{enumerate}[\rm (1)]
\item $\phi$ is an $\I_\tau$-monomorphism;
\item $\hom_\C(\phi, E)$ is an epimorphism for any injective object $E$ which cogenerates $\tau$;
\item $\ker(\phi)$ is $\tau$-torsion;
\item $\Q_\tau(\phi)$ is a monomorphism.
\end{enumerate}
\end{lemma}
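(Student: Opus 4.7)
My plan is to prove the cycle $(1) \Rightarrow (2) \Rightarrow (3) \Rightarrow (4) \Rightarrow (1)$, using the characterization of $\F$ via cogenerators from Lemma \ref{cog}, the exactness of $\Q_\tau$, and the adjunction $(\Q_\tau,\S_\tau)$.

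The implication $(1) \Rightarrow (2)$ is immediate once one observes that any injective $E$ cogenerating $\tau$ automatically lies in $\I_\tau$: indeed, $\T={}^{\perp}\{E\}$ forces $\hom_\C(T,E)=0$ for every $T\in\T$, so $E\in\T^{\perp}=\F$, and $E$ is injective by hypothesis. Hence asking that $\hom_\C(\phi,K)$ be surjective for all $K\in\I_\tau$ in particular demands it for such an $E$.

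For $(2) \Rightarrow (3)$ I would invoke the sketched proof of Lemma \ref{cog}: one may choose an injective cogenerator $E$ such that $\F$ coincides with the class of subobjects of products of copies of $E$. Let $K=\ker(\phi)$ and set $K'=K/\tor_\tau(K)$. If $K'\neq 0$, the embedding $K'\hookrightarrow E^{I}$ composed with an appropriate coordinate projection yields a non-zero map $K\to K'\to E$; by injectivity of $E$ this extends to $g:X\to E$ with $g|_K\neq 0$. Hypothesis (2) lifts $g$ through $\phi$, giving $h:Y\to E$ with $h\phi=g$; but then $g|_K=h\phi|_K=0$, a contradiction. Hence $K'=0$, i.e.\ $K\in\T$. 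The implication $(3)\Rightarrow(4)$ is routine: the functor $\Q_\tau$ is exact and annihilates $\T$, so applying it to $0\to K\to X\xrightarrow{\phi}Y$ and using $\Q_\tau(K)=0$ shows $\Q_\tau(\phi)$ is a monomorphism.

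Finally, $(4)\Rightarrow(1)$ exploits the adjunction. Let $I\in\I_\tau$. Since $I$ is injective, $E(I)=I$ and $E(I)/I=0\in\F$, so $I$ is $\tau$-local and $\S_\tau\Q_\tau(I)\cong I$ by Lemma \ref{Krause_loc}. Moreover $\Q_\tau(I)$ is injective in $\C/\T$: given a monomorphism $A\hookrightarrow B$ in $\C/\T$, apply the left exact $\S_\tau$ to get a mono in $\C$, extend any morphism $\S_\tau A\to I$ along it by injectivity of $I$, and transport back via $\Q_\tau\S_\tau=\mathrm{id}$. Consequently, from $\Q_\tau(\phi)$ mono we get that $\hom_{\C/\T}(\Q_\tau(\phi),\Q_\tau(I))$ is surjective, and the adjunction identifies this with $\hom_\C(\phi,\S_\tau\Q_\tau(I))=\hom_\C(\phi,I)$. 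Thus $\phi$ is an $\I_\tau$-monomorphism. The only genuinely delicate step is $(2)\Rightarrow(3)$, where one must convert the $\hom$-surjectivity condition into torsion of the kernel, and this is precisely where the cogenerator description of $\F$ provided by Lemma \ref{cog} is indispensable.
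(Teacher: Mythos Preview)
Your proof is correct, but it diverges from the paper's in two places, and in both the paper's route is noticeably shorter.

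For $(2)\Rightarrow(3)$ you pass through $K'=K/\tor_\tau(K)$, embed it into a power of $E$, and run a contradiction argument. The paper simply applies $\hom_\C(-,E)$ to $0\to\ker(\phi)\to X\to Y$; since $E$ is injective this yields the exact sequence $\hom_\C(Y,E)\to\hom_\C(X,E)\to\hom_\C(\ker(\phi),E)\to 0$, and surjectivity of the first map forces $\hom_\C(\ker(\phi),E)=0$, i.e.\ $\ker(\phi)\in{}^{\perp}\{E\}=\T$. So your claim that Lemma~\ref{cog}'s description of $\F$ is ``indispensable'' here is too strong: only the defining property $\T={}^{\perp}\{E\}$ is used, not the structure of $\F$.

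For closing the cycle you go $(4)\Rightarrow(1)$ via the adjunction, first establishing that any $I\in\I_\tau$ is $\tau$-local and that $\Q_\tau(I)$ is injective in $\C/\T$. This is correct and has some conceptual appeal, but the paper instead proves $(3)\Rightarrow(1)$ in one line: for $K\in\I_\tau$ the same exact sequence $\hom_\C(Y,K)\to\hom_\C(X,K)\to\hom_\C(\ker(\phi),K)$ has last term zero because $\ker(\phi)\in\T$ and $K\in\F$. (The paper then notes $(3)\Leftrightarrow(4)$ separately.) A minor remark: your citation of Lemma~\ref{Krause_loc} for $\S_\tau\Q_\tau(I)\cong I$ is slightly off; that lemma gives $\loc_\tau\loc_\tau\cong\loc_\tau$, whereas what you need is that $\loc_\tau$ fixes $\tau$-local objects, which is Proposition~\ref{loc_esp}(3) with $L=X=I$ and $T=0$.
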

\begin{proof}
The implication (1)$\Rightarrow$(2) is trivial since an injective object which cogenerates $\tau$ necessarily belongs to $\I_\tau$. Let us prove the implication (2)$\Rightarrow$(3). Choose an injective cogenerator $E$ for $\tau$ and apply the functor $\hom_\C(-,E)$ to the exact sequence $0\to \ker(\phi)\to X\to Y$ obtaining the following exact sequence of Abelian groups
$$\hom_\C(Y,E)\to \hom_\C(X,E)\to \hom_\C(\ker(\phi), E)\to0\, .$$
If $\hom_\C(\phi,E)$ is an epimorphism then $\hom_\C(\ker(\phi),E)=0$ that is, $\ker(\phi)$ is $\tau$-torsion. The equivalence (3)$\Leftrightarrow$(4) follows by the exactness of $\Q_\tau$ and the fact that $\ker(\Q_\tau)=\T$. It remains only to prove that (3)$\Rightarrow$(1). Indeed, given $K\in\I_\tau$ we can obtain as before an exact sequence $\hom_\C(Y,K)\to \hom_\C(X,K)\to \hom_\C(\ker(\phi), K)$. Since $K$ is $\tau$-torsion free and $\ker(\phi)$ is $\tau$-torsion, we have that $\hom_\C(\ker(\phi), K)=0$, as desired.
\end{proof}

\begin{lemma}
Let $\C$ be a Grothendieck category and let $\tau=(\T,\F)\in\tors(\C)$. Then, $\I_\tau\in \Inj(\C)$.
\end{lemma}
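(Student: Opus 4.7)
The plan is to verify the three defining properties of an injective class of injectives for $\I_\tau$, namely: every member of $\I_\tau$ is injective (immediate from the definition), $\I_\tau$ is closed under products and direct summands, and $\C$ has enough $\I_\tau$-injectives. The first condition holds by construction.

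For closure under products, I would argue as follows. Let $\{K_j : j \in J\} \subseteq \I_\tau$. Since $\C$ is a Grothendieck category, a product of injectives is injective, so $\prod_J K_j$ is an injective object. Moreover, $\F$ is a torsion free class and hence closed under products (this is part of the definition of a hereditary torsion free class in Subsection \ref{tt_subs}), so $\prod_J K_j \in \F$. Combining, $\prod_J K_j \in \I_\tau$. Closure under direct summands is equally routine: a direct summand of an injective is injective, and a direct summand is in particular a sub-object, so it lies in $\F$ because $\F$ is closed under sub-objects.

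The core point is to show that $\C$ has enough $\I_\tau$-injectives. Given an object $X \in \C$, consider the canonical projection $p : X \to X/\t_\tau(X)$ followed by the embedding $\iota : X/\t_\tau(X) \hookrightarrow E(X/\t_\tau(X))$ into the injective envelope, and set $K = E(X/\t_\tau(X))$ with $\varphi = \iota \circ p : X \to K$. Then $\ker(\varphi) = \t_\tau(X)$ is $\tau$-torsion, so by the equivalence (3)$\Leftrightarrow$(1) of Lemma \ref{i-mono}, $\varphi$ is an $\I_\tau$-monomorphism. It remains to check that $K \in \I_\tau$: by construction $K$ is injective, and because $X/\t_\tau(X) \in \F$ and $\F$ is closed under injective envelopes (since $\tau$ is hereditary), we have $K \in \F$, hence $K \in \I_\tau$.

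No real obstacle is expected; the mild subtlety is only to keep track of which closure properties of $\F$ are being invoked (closure under sub-objects, products and injective envelopes), all of which are built into the definition of a hereditary torsion free class, and to cite Lemma \ref{i-mono} at the right moment to recognize $\varphi$ as an $\I_\tau$-monomorphism.
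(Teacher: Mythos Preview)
Your proof is correct and follows essentially the same route as the paper's: you verify the closure properties of $\I_\tau$ by intersecting those of $\F$ with those of the class of injectives, and you produce an $\I_\tau$-monomorphism $X\to E(X/\t_\tau(X))$ with torsion kernel, invoking Lemma \ref{i-mono} exactly as the paper does. The only difference is that you spell out more explicitly which closure properties of $\F$ are used at each step.
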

\begin{proof}
$\I_\tau$ is the intersection of $\F$ with the class $\E_\C$ of all the injective objects in $\C$. The closure properties of $\F$ and $\E_\C$ imply that $\I_\tau$ is closed under products and direct summands. It remains to show that $\C$ has enough $\I_\tau$-injectives. Indeed, let $X$ be an object of $\C$
and let $\phi:X\to E(X/\tor_\tau(X))$ be the composition of the canonical morphisms $X\to X/\t_\tau(X)$ and $X/\t_\tau(X)\to E(X/\t_\tau(X))$. Clearly $E(X/\tor_\tau(X))\in \I_\tau$ and, furthermore, the kernel of $\phi$ is precisely $\t_\tau(X)$. By Lemma \ref{i-mono}, $\phi$ is an $\I_\tau$-monomorphism.
\end{proof}

Let $\C$ be a Grothendieck category. By the above lemma, we can associate an injective class of injectives to any given torsion theory. Let now $\I\in\Inj(\C)$ and define 
$$\F_\I=\{\text{sub-objects of the elements of $\I$}\}\,.$$

\begin{lemma}\label{cl-hyp-inj}
Let $\C$ be a Grothendieck category and let $\I\in\Inj(\C)$. Then, $\F_\I$ is a torsion free class.
\end{lemma}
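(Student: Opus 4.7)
My plan is to verify directly the four defining closure properties of a (hereditary) torsion free class recalled at the beginning of Subsection~2.1: closure under sub-objects, extensions, products, and injective envelopes. Three of these four are essentially bookkeeping, and only the extension axiom requires an actual argument.

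First I would dispose of the easy closures. Closure under sub-objects is immediate from the definition: if $F \leq I \in \I$ and $F' \leq F$, then $F' \leq I$, so $F' \in \F_\I$. For products, given a family $(F_j \leq I_j)_{j \in J}$ with $I_j \in \I$, the product $\prod_j F_j$ is a sub-object of $\prod_j I_j$, and the latter lies in $\I$ because injective classes are closed under products by Definition~3.1. For injective envelopes, given $F \leq I \in \I$, the injectivity of $I$ allows us to extend the inclusion $F \hookrightarrow I$ to a morphism $\psi \colon E(F) \to I$, and since $F$ is essential in $E(F)$, the kernel of $\psi$ intersects $F$ trivially and is therefore zero; hence $E(F)$ embeds in $I$ and belongs to $\F_\I$.

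The real content is closure under extensions, and this is the step I expect to be the main (though modest) obstacle. Given a short exact sequence $0 \to F' \to F \to F'' \to 0$ with $F', F'' \in \F_\I$, I would fix embeddings $F' \leq I'$ and $F'' \leq I''$ with $I', I'' \in \I$. Using the injectivity of $I'$, the composite $F' \hookrightarrow I'$ extends along $F' \hookrightarrow F$ to a morphism $\beta \colon F \to I'$ that restricts to the given inclusion on $F'$. Writing $p \colon F \to F''$ for the projection and $j \colon F'' \hookrightarrow I''$ for the chosen embedding, I would form the pairing
\[
\phi = (\beta, jp) \colon F \longrightarrow I' \times I''.
\]
If $x \in \ker \phi$, then $jp(x) = 0$ forces $x \in F'$ (as $j$ is mono), and then $\beta(x)$ coincides with the image of $x$ under $F' \hookrightarrow I'$, so $\beta(x) = 0$ gives $x = 0$. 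Hence $\phi$ is a monomorphism of $F$ into $I' \times I''$, which lies in $\I$ by closure under products, and therefore $F \in \F_\I$.

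I do not foresee any surprises beyond this extension step, which is the classical trick of embedding an extension into a direct product via the injectivity of one of its terms. Once the four closure properties are in place, Lemma~2.4 (or the convention fixed in Subsection~2.1) identifies $\F_\I$ as the torsion free class of a uniquely determined hereditary torsion theory in $\C$.
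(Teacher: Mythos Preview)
Your proof is correct and follows essentially the same route as the paper's: both dismiss sub-objects, products, and injective envelopes quickly (you are more explicit about the last of these), and both handle extensions by extending the embedding of the sub-object along the injectivity of $I'$ and pairing with the quotient map to obtain a monomorphism into $I'\oplus I''\in\I$. The only cosmetic difference is that your kernel computation is phrased element-wise, whereas the paper writes $\ker(\phi)=\ker(\phi_1)\cap\ker(\phi_2)=0$ directly; both are standard in this context.
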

\begin{proof}
Closure under taking sub-objects, products and injective envelopes easily follow by construction and the closure hypotheses on $\I$. It remains to prove that $\F_\I$ is closed under taking extensions. Let $X$ be an object in $\C$ and let $Y\leq X$ be a sub-object such that both $Y$ and $X/Y\in\F_\I$. By construction, there exist $I_1$ and $I_2\in \I$ such that $Y\leq I_1$ and $X/Y\leq I_2$. Let $\phi_1:X\to I_1$ be a morphism extending the canonical inclusion $Y\to I_1$ (whose existence is ensured by the injectivity of $I_1$) and let $\phi_2:X\to I_2$ be the composition of the canonical projection $X\to X/Y$ with the inclusion $X/Y\to I_2$. Define $\phi=\phi_1\oplus \phi_2:X\to I_1\oplus I_2$. Then, $\ker(\phi)=\ker(\phi_1)\cap \ker(\phi_2)=0$ and so $X$ is a sub-object of $I_1\oplus I_2\in \I$.
\end{proof}

\begin{definition}
Let $\C$ be a Grothendieck category and let $\I\in\Inj(\C)$. We define $\tau_\I$ to be the unique torsion theory on $\C$ whose torsion free class is $\F_\I$.
\end{definition}

We are now ready to prove the main result of this subsection:

\begin{theorem}\label{corresp_inj_tt}
Let $\C$ be a Grothendieck category. Then the map $\tau\mapsto\I_\tau$ is an order-reversing bijection between $\tors(\C)$ and $\Inj(\C)$. The inverse bijection is given by the correspondence $\I\mapsto\tau_\I$.
\end{theorem}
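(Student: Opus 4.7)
The plan is to verify the two compositions $\tau \mapsto \I_\tau \mapsto \tau_{\I_\tau}$ and $\I \mapsto \tau_\I \mapsto \I_{\tau_\I}$ are the identity, and then check that $\tau \mapsto \I_\tau$ reverses the orders. The previous two lemmas already guarantee that each assignment lands in the right set, so only the equalities of torsion theories and of injective classes are left to establish.

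\smallskip
First I would verify $\tau_{\I_\tau} = \tau$. Since a torsion theory is determined by its torsion-free class, it suffices to show $\F_{\I_\tau} = \F$. The inclusion $\F_{\I_\tau} \subseteq \F$ is immediate: every element of $\F_{\I_\tau}$ is a sub-object of some injective object in $\F$, and $\F$ is closed under sub-objects. For the reverse inclusion, pick $X \in \F$; since $\tau$ is hereditary, $\F$ is closed under injective envelopes, so $E(X) \in \F$ is an injective object in $\F$, i.e.\ $E(X) \in \I_\tau$. The monomorphism $X \hookrightarrow E(X)$ then exhibits $X$ as an element of $\F_{\I_\tau}$.

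\smallskip
Next I would verify $\I_{\tau_\I} = \I$. By construction, $\I_{\tau_\I}$ consists of the injective objects in $\F_\I$, i.e.\ the injective objects that embed into some element of $\I$. If $E \in \I$ then $E$ is injective (since $\I$ is an injective class of injectives) and embeds into itself, so $E \in \I_{\tau_\I}$. Conversely, if $E \in \I_{\tau_\I}$, then $E$ is injective and there exists a monomorphism $E \hookrightarrow I$ with $I \in \I$. Since $E$ is injective, this monomorphism splits, making $E$ a direct summand of $I$; because $\I$ is closed under direct summands, $E \in \I$. This shows the two assignments are mutually inverse.

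\smallskip
Finally, I would check the order-reversing property. Suppose $\tau' \preceq \tau$ in $\tors(\C)$, so that $\F \subseteq \F'$ by definition of the order on $\tors(\C)$. Any injective object in $\F$ is then an injective object in $\F'$, giving $\I_\tau \subseteq \I_{\tau'}$, i.e.\ $\I_\tau \preceq \I_{\tau'}$ in the order on $\Inj(\C)$. The main (and essentially only) subtlety in the whole argument is the splitting step in the second composition: without injectivity of $E$ one would not recover $E \in \I$ from a mere embedding into $\I$, and this is exactly where restricting to injective classes \emph{of injectives} is essential.
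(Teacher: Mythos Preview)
Your proof is correct and follows essentially the same approach as the paper's. The only cosmetic difference is that for the inclusion $\F\subseteq\F_{\I_\tau}$ you embed $X\in\F$ into its injective envelope $E(X)\in\I_\tau$, whereas the paper embeds it into a power of an injective cogenerator of $\tau$; both arguments are equally valid, and you additionally spell out the order-reversing check, which the paper leaves implicit.
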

\begin{proof}
Let $\tau=(\T,\F)\in\tors(\C)$ we want to prove that $\F_{\I_{\tau}}=\F$. The inclusion $\F_{\I_\tau}\subseteq \F$ is trivial, while, given $F\in \F$ and an injective cogenerator $E$ for $\tau$, there exists a set $S$ such that $F\leq E^{S}\in \I_\tau$ and so $F\in \F_{\I_\tau}$. %Suppose now $G\in \F_{\I_\tau}$, by definition there exists $E'\in\I_\tau\subseteq \F$ such that $G\leq E'$. Since $\F$ is closed under taking sub-objects, also $G\in \F$. 

\smallskip
On the other hand, let $\I\in\Inj(\C)$ and $\tau_\I=(\F,\T)$. We want to prove that $\I_{\tau_\I}=\I$. The inclusion $\I\subseteq \I_{\tau_\I}$ is trivial, while, given $I\in\I_{\tau_\I}$, by definition $I\in \F=\F_{\I}$ and so $I$ is an injective sub-object, and so a summand, of an element of $\I$, thus $I\in \I$.  
\end{proof}

The above theorem together with Theorem \ref{class_gabriel} gives the following

\begin{corollary}\label{coro_inj}
Let $\C$ be a Grothendieck category satisfying (Hyp.1). There are bijections
$$\xymatrix{
\{\text{Gen. closed subsets of }\Sp(\C)\} \ar@<2,5pt>[r] & \Inj(\C)\ar@<2,5pt>[l]\ar@<2,5pt>[r] &\{\text{Spec. closed subsets of }\Sp(\C)\}\ar@<2,5pt>[l]
}$$
$$\xymatrix{
G(\tau_\I)&&\I\ar@{|->}[ll]\ar@{|->}[rr]&&S(\tau_\I)
}$$
\end{corollary}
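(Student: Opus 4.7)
The plan is to deduce the statement by composing two bijections already established in the paper. First, Theorem \ref{corresp_inj_tt} gives an order-reversing bijection
\[
\Phi:\tors(\C) \longleftrightarrow \Inj(\C), \qquad \tau \mapsto \I_\tau, \qquad \I \mapsto \tau_\I.
\]
Second, under (Hyp.1) (stability), Theorem \ref{class_gabriel} gives bijections
\[
\Psi_G:\tors(\C) \longleftrightarrow \{\text{generalization closed subsets of } \Sp(\C)\}, \qquad \tau \mapsto G(\tau),
\]
\[
\Psi_S:\tors(\C) \longleftrightarrow \{\text{specialization closed subsets of } \Sp(\C)\}, \qquad \tau \mapsto S(\tau),
\]
which are mutually complementary in the sense that $G(\tau)$ and $S(\tau)$ partition $\Sp(\C)$.

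The claimed bijections then arise as the compositions $\Psi_G \circ \Phi^{-1}$ and $\Psi_S \circ \Phi^{-1}$. Concretely, given $\I \in \Inj(\C)$, one associates the generalization-closed set $G(\tau_\I) = \{\pi\in\Sp(\C) : \tor_{\tau_\I}(E(\pi))=0\}$ and the specialization-closed set $S(\tau_\I)=\Sp(\C)\setminus G(\tau_\I)$; conversely, a generalization-closed $G\subseteq\Sp(\C)$ corresponds, via the inverse $\Psi_G^{-1}$ of Theorem \ref{class_gabriel}, to a unique torsion theory $\tau$, and then $\I_\tau\in\Inj(\C)$ is the associated injective class. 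Bijectivity of both composites is immediate because each of $\Phi$, $\Psi_G$, $\Psi_S$ is a bijection.

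I anticipate no real obstacle: the proof is essentially a one-line composition of the two theorems and amounts to writing down the composite maps in both directions. The only thing worth being careful about is the implicit assumption that $\C$ is a Gabriel category (needed in order for $\Sp(\C)$ and the notion of specialization/generalization to be meaningful and for Theorem \ref{class_gabriel} to apply); granting this, the corollary follows by pure formal composition of the bijections, with no additional analysis required.
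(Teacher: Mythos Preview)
Your proposal is correct and matches the paper's approach exactly: the paper presents the corollary without proof, merely stating that it follows from Theorem \ref{corresp_inj_tt} together with Theorem \ref{class_gabriel}, which is precisely the composition of bijections you spell out. Your remark about the implicit Gabriel-category assumption is a fair caveat, but otherwise there is nothing to add.
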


%Let us collect some of the main results of this Section in the following
%\begin{theorem}\label{classificacion_resumen}
%Let $\C$ be a Grothendieck category. Then the maps
%$$\tors(\C)\leftrightarrows \Inj(\C)=\{\text{Injective classes of injectives on $\C$}\}\,,$$ 
%described in Proposition \ref{corresp_inj_tt}, are an order reversing bijection. If furthermore $\C$ has Gabriel dimension and it is stable, then we can add to the picture the maps described in \eqref{mappe}, which are an order reversing bijection,
%$$\{S\subseteq \Sp(\C):\text{$S$ is generalization closed}\}\leftrightarrows \tors(\C)\leftrightarrows \Inj(\C)\,.$$
%If $\C$ is also locally Noetherian, then we can also add the maps described in \eqref{mappe'} which are an order preserving bijection, obtaining the following  picture:
%$$\xymatrix{
%&\Inj(\C)\ar@<2,5pt>[d]\\
%&\tors(\C)\ar@<2,5pt>[dr]\ar@<2,5pt>[ld]\ar@<2,5pt>[u]&&\\
%\{S\subseteq \Sp(\C):\text{$S$  gen. closed}\}\ar@<2,5pt>[ur]&&\{S\subseteq \Sp(\C):\text{$S$  spec. closed}\}\ar@<2,5pt>[ul]
%}$$
%\end{theorem}
%The above theorem applies in particular to commutative Noetherian rings. 

\subsection{Module categories}

In this subsection we specialize our results about injective classes to categories of modules, re-obtaining as corollaries the main results of \cite{CPS}.

\begin{definition}{\rm \cite{Dauns, CPS}}
Let $R$ be a ring. A non-empty set $\A$ of left ideal of $R$ is said to be a {\em torsion free set} (or, {\em saturated set}) if the following conditions hold:
\begin{enumerate}[\rm ({NS}.1)]
\item $\A$ is closed under arbitrary intersections;
\item for all $x\in R$ and $I\in \A$, $(I:x)=\{r\in R:rx\in I\}\in \A$;
\item if a proper left ideal $J< R$ has the property that, for all $x\in R\setminus J$, there is $I\in \A$, such that $(J:x)\subseteq I$, then $J\in \A$.
\end{enumerate}
\end{definition}

Let us recall the following fact from \cite{Dauns}.

\begin{lemma}\label{Dauns}{\rm\cite[Corollary 2.3.14]{Dauns}}
Let $R$ be a ring. There is a bijective correspondence between $\tors(\lmod R)$ and the family of torsion free sets of ideals of $R$.
\end{lemma}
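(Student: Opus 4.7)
The plan is to construct explicit mutually inverse maps between $\tors(\lmod R)$ and the family of torsion free sets of left ideals, then verify the three axioms on one side correspond to the defining closure properties of hereditary torsion theories on the other.

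For the forward direction, given $\tau=(\T,\F)\in \tors(\lmod R)$, I would set
$$\A_\tau=\{I\leq R : R/I\in\F\}.$$
Axiom (NS.1) follows because $\F$ is closed under products and submodules, so $R/\bigcap_{\alpha}I_\alpha$ embeds into $\prod_\alpha R/I_\alpha\in\F$. Axiom (NS.2) follows from the monomorphism
$R/(I:x)\hookrightarrow R/I$ sending $r+(I:x)\mapsto rx+I$, again using closure of $\F$ under subobjects. For (NS.3) I would argue by contrapositive: if $J\notin\A_\tau$ then $\mathfrak{t}_\tau(R/J)\neq0$, and picking a nonzero torsion $\bar x=x+J$ yields $x\in R\setminus J$ with $R/(J:x)\cong R\bar x\in\T$, and then the assumed containment $(J:x)\subseteq I$ with $I\in\A_\tau$ would force the quotient $R/I\in\F$ to receive a nonzero map from a torsion module, contradicting $\Hom_R(\T,\F)=0$.

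For the backward direction, given a torsion free set $\A$, I would define
$$\F_\A=\{M\in\lmod R : M \text{ embeds in some } \prod_\alpha R/I_\alpha,\ I_\alpha\in\A\}$$
and $\T_\A={}^{\perp}\F_\A$. The class $\F_\A$ is closed under subobjects and products by construction; closure under injective envelopes follows because $\F_\A$ is defined by the existence of a monomorphism into a product of modules of the prescribed form, and injective envelopes of such submodules sit inside those products up to isomorphism. The delicate point is closure under extensions, which I would verify by an element-chase using (NS.2) to pull back annihilators along the colon construction. This yields a hereditary torsion theory $\tau_\A=(\T_\A,\F_\A)$.

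It then remains to prove $\A_{\tau_\A}=\A$ and $\tau_{\A_\tau}=\tau$. The inclusion $\A\subseteq\A_{\tau_\A}$ and the inclusion $\F_\tau\subseteq\F_{\A_\tau}$ (via embedding any $F\in\F$ into a product of cyclic quotients $R/\mathrm{ann}(x)$) are straightforward, and the reverse inclusion $\F_{\A_\tau}\subseteq\F_\tau$ is immediate from closure of $\F_\tau$ under products and submodules. The single genuine obstacle is the inclusion $\A_{\tau_\A}\subseteq\A$: given $J$ with $R/J\in\F_\A$, one obtains an embedding $R/J\hookrightarrow\prod R/I_\alpha$, and for every $x\notin J$ the image of $x+J$ is nonzero in some factor $R/I_{\alpha(x)}$, which translates to $(J:x)\subseteq I_{\alpha(x)}\in\A$; axiom (NS.3) is then precisely the combinatorial condition needed to conclude $J\in\A$. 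This step is the main obstacle, since (NS.1) and (NS.2) correspond to familiar closure operations, while (NS.3) is a Gabriel-type saturation axiom whose role only becomes visible in reconstructing $\A$ from $\tau_\A$.
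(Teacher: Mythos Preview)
The paper does not give its own proof of this lemma; it is simply quoted from \cite[Corollary 2.3.14]{Dauns}, so there is nothing in the paper to compare against and I can only assess your argument on its merits.

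Your forward map $\tau\mapsto\A_\tau$ and the verifications of (NS.1) and (NS.2) are fine. In (NS.3) your argument implicitly assumes the ideal $I$ is proper (otherwise $I=R\in\A_\tau$ contains every $(J:x)$ trivially); this is almost certainly an imprecision in the axiom as recorded in the paper rather than a flaw in your reasoning.

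The backward construction, however, is genuinely wrong. Your class
\[
\F_\A=\Bigl\{M : M\hookrightarrow \prod_\alpha R/I_\alpha,\ I_\alpha\in\A\Bigr\}
\]
is in general \emph{not} closed under injective envelopes, so it is not the torsion-free class of a hereditary torsion theory. Take $R=\Z$ and $\A=\{0,\Z\}$; this is the torsion-free set attached to the ordinary torsion theory on abelian groups, whose torsion-free class consists of all torsion-free groups. But every homomorphism $\mathbb Q\to\Z$ vanishes, so $\mathbb Q$ does not embed in any power $\Z^I$ and hence $\mathbb Q\notin\F_\A$ under your definition, even though $\mathbb Q=E(\Z)$ and $\Z\in\F_\A$. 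The same example breaks your verification of $\F_\tau\subseteq\F_{\A_\tau}$: a torsion-free module need not embed in a product of cyclic modules $R/\mathrm{ann}(x)$.

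A correct backward map is obtained by setting
\[
\F_\A=\{M:\mathrm{ann}_R(m)\in\A\ \text{for every}\ 0\neq m\in M\},
\]
equivalently the class of modules embedding in a product of injective envelopes $E(R/I_\alpha)$ with $I_\alpha\in\A$. With this definition the closure properties of a hereditary torsion-free class go through (closure under $E(-)$ uses essentiality together with (NS.3)), and your arguments for $\A_{\tau_\A}=\A$ and $\tau_{\A_\tau}=\tau$ then work essentially as written.
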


The following corollary, which is an immediate consequence of Lemma \ref{Dauns} and Theorem \ref{corresp_inj_tt}, is a generalization of \cite[Theorem 3.7]{CPS}.

\begin{corollary}\label{sat_sat}
Let $R$ be a ring. There is a bijective correspondence between $\Inj(\lmod R)$ and the family of torsion free sets of ideals of $R$.
\end{corollary}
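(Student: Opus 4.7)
The strategy is a direct composition of two previously established bijections, as the statement itself suggests. First, specializing Theorem \ref{corresp_inj_tt} to the Grothendieck category $\C = \lmod R$ produces an order-reversing bijection $\tors(\lmod R) \longleftrightarrow \Inj(\lmod R)$ sending $\tau \mapsto \I_\tau$ with inverse $\I \mapsto \tau_\I$. Second, Lemma \ref{Dauns} supplies a bijection between $\tors(\lmod R)$ and the family of torsion free sets of left ideals of $R$. Composing these two bijections yields the correspondence asserted by the corollary.

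To make the composite concrete, I would unwind the definitions as follows: an injective class $\I \in \Inj(\lmod R)$ is mapped to the torsion theory $\tau_\I = (\T, \F)$ from Lemma \ref{cl-hyp-inj}, with $\F$ the class of sub-objects of elements of $\I$; this torsion theory corresponds under Lemma \ref{Dauns} to the torsion free set
$$\A_\I \;=\; \{J \leq R : R/J \in \F\} \;=\; \{J \leq R : R/J \text{ embeds into some element of } \I\}.$$
In the reverse direction, a torsion free set $\A$ determines a torsion theory $\tau$ via Lemma \ref{Dauns}, and $\A$ is sent to $\I_\tau$, the class of injective left $R$-modules lying in the torsion free class of $\tau$.

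There is essentially no obstacle in this argument: the entire content of the corollary lies in the two results it invokes. The only observation worth flagging is that, because Theorem \ref{corresp_inj_tt} is order-reversing while the bijection of Lemma \ref{Dauns} is naturally order-preserving, the composite bijection obtained above is order-reversing.
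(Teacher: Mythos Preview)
Your proposal is correct and follows exactly the paper's approach: the corollary is stated there as an immediate consequence of Lemma~\ref{Dauns} and Theorem~\ref{corresp_inj_tt}, and you have simply spelled out the composition of those two bijections (with some additional explicit description of the composite that the paper omits).
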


Using the classical theory of injective modules over commutative Noetherian rings we can also derive from Corollary \ref{coro_inj} the characterization of injective classes of injectives over such rings given in \cite[Corollary 3.9]{CPS}. 

\begin{definition}
Let $R$ be a commutative ring. The {\em spectrum} $\Spec(R)$ of $R$ is the poset of all the prime ideals in $R$ (ordered by inclusion). Given $\frak p,\frak q\in \Spec(R)$, if $\mathfrak p\subseteq \mathfrak q$ we say that $\mathfrak p$ is a {\em generalization} of $\mathfrak q$ and that $\mathfrak q$ is a {\em specialization} of $\mathfrak p$.
\end{definition}

\begin{lemma}
Let $R$ be a commutative Noetherian ring, then there is a bijection
$$\Sp(\lmod R)\to \Spec(R)\,, \ \ \mathfrak p\mapsto E(R/\mathfrak p)\,.$$
Furthermore, given $\mathfrak p$, $\mathfrak q\in\Spec(R)$ and denoting by $\pi(\mathfrak p)$ and $\pi(\mathfrak q)$ the prime torsion theories cogenerated by $E(R/\mathfrak p)$ and $E(R/\mathfrak q)$ respectively, 
$$(\mathfrak p\subseteq \mathfrak q)\ \ \Longleftrightarrow \ \ (\pi(\mathfrak q)\preceq\pi(\mathfrak p))\,.$$
\end{lemma}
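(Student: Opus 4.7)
The plan is to obtain the bijection from Matlis's classical theorem and to derive the order relation from a direct computation of morphisms between the modules $E(R/\mathfrak p)$. Since $R$ is a Noetherian generator of $\lmod R$, this category is locally Noetherian, hence a Gabriel category, so by definition $\Sp(\lmod R)$ is the set of isomorphism classes of indecomposable injective $R$-modules. Matlis's theorem asserts exactly that every indecomposable injective $R$-module is isomorphic to $E(R/\mathfrak p)$ for a unique $\mathfrak p \in \Spec(R)$, which yields the first bijection.

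For the order-reversing statement, I would first establish the auxiliary fact
\[
\hom_R(R/\mathfrak q, E(R/\mathfrak p)) \neq 0 \iff \mathfrak q \subseteq \mathfrak p.
\]
The $(\Leftarrow)$ direction is immediate from the composition $R/\mathfrak q \twoheadrightarrow R/\mathfrak p \hookrightarrow E(R/\mathfrak p)$. For $(\Rightarrow)$, the crucial input is $\mathrm{Ass}_R(E(R/\mathfrak p)) = \{\mathfrak p\}$: any nonzero $x \in E(R/\mathfrak p)$ generates a Noetherian cyclic submodule $Rx \cong R/\mathrm{ann}(x)$, and since its associated primes are contained in those of $E(R/\mathfrak p)$, the ideal $\mathrm{ann}(x)$ is $\mathfrak p$-primary; in particular $\sqrt{\mathrm{ann}(x)} = \mathfrak p$. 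Consequently any ideal $\mathfrak q$ annihilating a nonzero element of $E(R/\mathfrak p)$ satisfies $\mathfrak q \subseteq \sqrt{\mathrm{ann}(x)} = \mathfrak p$.

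Next I would promote this to the claim: $E(R/\mathfrak p)$ is $\pi(\mathfrak q)$-torsion-free if and only if $\mathfrak p \subseteq \mathfrak q$. If $\mathfrak p \not\subseteq \mathfrak q$, then by the preceding step $\hom_R(R/\mathfrak p, E(R/\mathfrak q)) = 0$, so $R/\mathfrak p$ is a nonzero $\pi(\mathfrak q)$-torsion subobject of $E(R/\mathfrak p)$, contradicting torsion-freeness. Conversely, if $\mathfrak p \subseteq \mathfrak q$, then for any nonzero cyclic $Rx \leq E(R/\mathfrak p)$ one has $\mathrm{ann}(x) \subseteq \mathfrak p \subseteq \mathfrak q$, so the composition $R/\mathrm{ann}(x) \twoheadrightarrow R/\mathfrak q \hookrightarrow E(R/\mathfrak q)$ is nonzero; by injectivity of $E(R/\mathfrak q)$ it extends to any larger submodule, ruling out nonzero $\pi(\mathfrak q)$-torsion subobjects of $E(R/\mathfrak p)$.

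To conclude, $\pi(\mathfrak q) \preceq \pi(\mathfrak p)$ is by definition equivalent to the inclusion of the $\pi(\mathfrak p)$-torsion-free class into the $\pi(\mathfrak q)$-torsion-free class. Since $E(R/\mathfrak p)$ always lies in the former, the implication $\pi(\mathfrak q) \preceq \pi(\mathfrak p) \Rightarrow \mathfrak p \subseteq \mathfrak q$ is immediate from the previous paragraph. For the converse, when $\mathfrak p \subseteq \mathfrak q$, $E(R/\mathfrak p)$ lies in the $\pi(\mathfrak q)$-torsion-free class, and closure under products and subobjects then covers the whole $\pi(\mathfrak p)$-torsion-free class, whose objects embed into products of copies of $E(R/\mathfrak p)$. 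The only genuine technical obstacle is the associated-primes computation in the second paragraph, which is precisely where both commutativity and the Noetherian hypothesis enter; the rest is formal manipulation.
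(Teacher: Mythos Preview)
Your proof is correct and close in spirit to the paper's, with the same two inputs from Matlis: the bijection between indecomposable injectives and prime ideals, and the fact that annihilators of nonzero elements of $E(R/\mathfrak p)$ are $\mathfrak p$-primary. The one substantive difference is in how torsion-freeness of $E(R/\mathfrak p)$ with respect to $\pi(\mathfrak q)$ is obtained. The paper argues via the general stability framework: since $\lmod R$ is stable for commutative Noetherian $R$, Lemma~\ref{split_inj_lemma} forces an indecomposable injective to be either $\pi(\mathfrak q)$-torsion or $\pi(\mathfrak q)$-torsion-free, and then Lemma~\ref{stab_ort} converts the nonvanishing of $\hom_R(E(R/\mathfrak p),E(R/\mathfrak q))$ directly into $\pi(\mathfrak q)\preceq\pi(\mathfrak p)$. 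You instead verify torsion-freeness by hand, showing every nonzero cyclic submodule $Rx\leq E(R/\mathfrak p)$ maps nontrivially to $E(R/\mathfrak q)$ when $\mathfrak p\subseteq\mathfrak q$, and then pass to arbitrary submodules by injectivity. Your route is slightly more elementary and self-contained (it does not need to cite stability), while the paper's route illustrates how the statement is a special case of the general machinery developed in Section~\ref{tt_sec}.
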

\begin{proof}
The fact that this map is well-defined and bijective is Proposition 3.1 in \cite{matlis}. Furthermore, if $\mathfrak p\subseteq \mathfrak q$, then there is a non-trivial map $R/\mathfrak p\to R/\mathfrak q$ and this can be used to show that $\hom_R(E(R/\mathfrak p),E(R/\mathfrak q))\neq 0$. By Lemma \ref{stab_ort}, this shows that $\pi(\mathfrak q)\preceq\pi(\mathfrak p)$. On the other hand, if $\pi(\mathfrak q)\preceq\pi(\mathfrak p)$, then $E(R/\mathfrak p)$ is not $\pi(\mathfrak q)$-torsion and so, by stability, it is $\pi(\mathfrak q)$-torsion free. In particular, $\hom_R(R/\mathfrak p, E(R/\mathfrak q))\neq 0$. Consider a non-trivial morphism $\phi: R/\mathfrak p\to E(R/\mathfrak q)$ and let $0\neq x\in \Im(\phi)$. Then, $\mathfrak p\subseteq \Ann_R(x)\subseteq \mathfrak q$, since $\Ann_R(x)$ is a $\mathfrak q$-primary ideal by Lemma 3.2 in \cite{matlis}.
\end{proof}

The following corollary is a consequence of the above lemma and Corollary \ref{coro_inj}.

\begin{corollary}\label{sat_noeth}{\rm\cite[Corollary 3.9]{CPS}}
Let $R$ be a commutative Noetherian ring. There is a bijective correspondence between $\Inj(\lmod R)$ and the family generalization closed sets of prime ideals of $R$.
\end{corollary}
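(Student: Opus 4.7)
The plan is to derive Corollary \ref{sat_noeth} by combining Corollary \ref{coro_inj} with the preceding lemma. Corollary \ref{coro_inj} already provides a bijection $\Inj(\C) \leftrightarrow \{\text{generalization closed subsets of }\Sp(\C)\}$ for any Grothendieck category $\C$ satisfying (Hyp.1), so the whole task reduces to (a) checking that $\C = \lmod R$ satisfies (Hyp.1) for commutative Noetherian $R$, and (b) transporting the bijection from $\Sp(\lmod R)$ to $\Spec(R)$ through the order-reversing bijection exhibited in the lemma just above.

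First I would verify that $\lmod R$ is stable whenever $R$ is commutative Noetherian: this is a classical theorem of Gabriel (one takes a hereditary torsion class $\T$, an object $M \in \T$, and uses that each associated prime $\mathfrak p$ of $E(M)$ is already an associated prime of $M$, whence $R/\mathfrak p \in \T$ and, $R/\mathfrak p$ being a cyclic subobject of $E(M)$, every cyclic and hence every finitely generated submodule of $E(M)$ lies in $\T$; by the locally Noetherian property $E(M)$ is the directed union of these and thus belongs to $\T$). This establishes (Hyp.1), so Corollary \ref{coro_inj} applies and yields a bijection
$$\Inj(\lmod R) \longleftrightarrow \{\text{generalization closed subsets of } \Sp(\lmod R)\}.$$

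Next I would apply the preceding lemma, which gives a bijection $\Spec(R) \to \Sp(\lmod R)$, $\mathfrak p \mapsto E(R/\mathfrak p)$, and records the key order-reversing compatibility
$$\mathfrak p \subseteq \mathfrak q \iff \pi(\mathfrak q) \preceq \pi(\mathfrak p).$$
Using this equivalence I would check that a subset $S \subseteq \Spec(R)$ is generalization closed (in the standard sense, i.e.\ $\mathfrak q \in S$ and $\mathfrak p \subseteq \mathfrak q$ forces $\mathfrak p \in S$) if and only if its image $S' \subseteq \Sp(\lmod R)$ is generalization closed in the sense of the paper (i.e.\ $\pi \in S'$ and $\pi \preceq \pi'$ forces $\pi' \in S'$): indeed $\pi(\mathfrak p)$ is a generalization of $\pi(\mathfrak q)$ in $\Sp(\lmod R)$ precisely when $\mathfrak p$ is a generalization of $\mathfrak q$ in $\Spec(R)$, so the two notions of generalization match under the bijection. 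Composing the two bijections then gives the desired correspondence.

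I do not expect any serious obstacle; the only nontrivial ingredient is the classical stability of $\lmod R$ for commutative Noetherian $R$, which I would either prove in one line as sketched above or simply cite. Everything else is bookkeeping: checking that ``generalization'' translates correctly across the order-reversing identification $\Spec(R) \leftrightarrow \Sp(\lmod R)$, and invoking Corollary \ref{coro_inj}.
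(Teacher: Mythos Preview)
Your proposal is correct and follows exactly the route the paper intends: the paper states that the corollary ``is a consequence of the above lemma and Corollary~\ref{coro_inj}'', and you have simply spelled out those two ingredients, including the (implicit) verification that $\lmod R$ is stable for commutative Noetherian $R$ and the check that generalization closed subsets correspond under the order-reversing bijection $\Spec(R)\leftrightarrow\Sp(\lmod R)$.
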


\section{Model approximation}\label{Sec_mod_approx}
In the first part of this section we recall some definitions and terminology about model categories and model approximations. 

\begin{definition}
Let $\C$ be a category and let $\W$ be a collection of morphisms in $\C$. The pair $(\C,\W)$ is a {\em category with weak equivalences} if, given two composable morphisms $\f$ and $\psi$, whenever two elements of $\{\f,\psi,\psi\f\}$ belong to $\W$ so does the third. The elements of $\W$ are called {\em weak equivalences} of $(\C,\W)$.
\end{definition}

We now recall the definition of a model category. We will give just one concrete example of model category (see Example \ref{inj_mod_struc}), we refer to \cite{Dwyer} and \cite{hovey} for further examples and properties of model categories. 

\begin{definition}\label{model_cat}
Let $\M$ be a bicomplete category and let $\W$, $\cal B$ and $\cal C$ be three classes of morphisms; $(\M,\W,\cal B,\cal C)$ is a {\em model category} provided the following conditions hold:
\begin{enumerate}[\rm({MC}.1)]
\item $(\C,\W)$ is a category with weak equivalences;
\item $\W$, $\cal B$ and $\cal C$ are closed under retracts (in the category of morphisms). That is, given a commutative diagram as follows:
$$\xymatrix{
X\ar@/_-10pt/@{.>}[rr]|{\id}\ar[r]\ar[d]_(.43)\phi&X'\ar[r]\ar[d]|(.43){\phi'}&X\ar[d]^(.43)\phi\\
Y\ar[r]\ar@/_10pt/@{.>}[rr]|\id&Y'\ar[r]&Y
}$$
if $\phi'$  belongs to $\W$ (resp., $\cal B$ or $\cal C$), so does $\phi$;
\item consider the following diagram,
$$\xymatrix{
C\ar[r]\ar[d]_c&B\ar[d]^b\\
C'\ar@{.>}[ru]|{\psi}\ar[r]&B'
}$$
where $b\in \cal B$ and $c\in\cal C$. If the external square commutes and  either $b\in \W$ or $c\in \W$, then there exists $\psi$ as above making the entire diagram commutative;
\item given a morphism $\phi$, there exist $b\in \cal B\cap \W$, $c\in \cal C$, $b'\in \cal B$ and $c'\in \cal C\cap \W$, such that
$$\phi=bc\ \text{ and }\ \phi=b'c'\,.$$
\end{enumerate}
The elements of $\W$, $\cal B$, $\cal B\cap \W$, $\cal C$ and $\cal C\cap \W$ are called respectively {\em weak equivalences, fibrations, acyclic fibrations, cofibrations and acyclic cofibrations}.\\ Given an object $X\in\M$, if the unique map from the initial object to $X$ is a cofibration, then $X$ is said to be {\em cofibrant}. If the unique map from $X$ to the terminal object is a fibration then $X$ is said to be {\em fibrant}.
\end{definition}

Recall that a morphism of cochain complexes is a quasi-isomorphism if it induces isomorphism in cohomology. Equivalently, it's mapping cone is an exact complex.

\begin{example}\label{inj_mod_struc}
Let $\C$ be a Grothendieck category and recall that the category $\Ch(\C)$ of (unbounded) cochain complexes on $\C$ is a bicomplete category. Let $\W$ be the class of quasi-isomorphisms in $\Ch(\C)$, then $(\Ch(\C),\W)$ is a category with weak equivalences.\\
Recall that a complex $E^\bullet\in\Ch(\C)$ is {\em dg-injective} if $E^n$ is injective for all $n\in\Z$ and if the complex $\mathcal{H}{om}(X^\bullet,E^\bullet)$ is exact, for any exact complex $X^\bullet\in\Ch(\C)$. Let $\mathcal B$ be the class of all the epimorphisms with dg-injective kernels and let $\mathcal C$ be the class of monomorphisms, then $(\Ch(\C),\W,\cal B,\cal C)$ is a model category (see for example \cite{hovey} or \cite{Gil07} for a proof).
\end{example}

We now introduce model approximations. Here we just give the definition and few comments, for a systematic study of the properties and applications of model approximations we refer to \cite{PS}.

\begin{definition}\label{mod_appr} Let $(\C,\W_\C)$ be a category with weak equivalences. A {\em right model approximation} for $(\C,\W_\C)$ is a model category $(\M,\W,\cal B,\cal C)$ and a pair of functors 
$$l :  \xymatrix{\C\ar@<2pt>[r]\ar@<-2pt>@{<-}[r]&\M}  : r$$ 
satisfying the following conditions:
\begin{enumerate}[\rm ({MA}.1)]
\item $l$ is left adjoint to $r$;
\item if $\f\in\W_\C$, then $l(\f)\in\W$; 
\item if $\psi$ is a weak equivalence between fibrant objects in $\M$, then $r(\psi)\in\W_\C$;
\item if $l(X)\to Y$ is a weak equivalence in $\M$ with $X$ fibrant, the adjoint morphism $X \to r(Y)$ is in $\W_\C$.
\end{enumerate}
\end{definition}

Given two categories with weak equivalences $(\C_1,\W_1)$ and $(\C_2,\W_2)$, one can find conditions under which a model approximation of $(\C_2,\W_2)$ gives automatically a model approximation of $(\C_1,\W_1)$:

\begin{lemma}\label{compatible_lemma}
Let $(\C_1,\W_1)$ and $(\C_2,\W_2)$ be categories with weak equivalences and let 
$$l :  \xymatrix{\C_1\ar@<2pt>[r]\ar@<-2pt>@{<-}[r]&\C_2}  : r$$
be an adjunction such that:
\begin{enumerate}[\rm (1)]
\item $\phi\in\W_1$ implies $l\phi\in \W_2$;
\item $\psi\in\W_2$ implies $r\psi\in \W_1$;
\item if a morphism $l(C)\to D$ is in $\W_2$, then the adjoint morphism $C\to r(D)$ is in $\W_1$.
\end{enumerate}
If $(\M,\W,\cal B,\cal C)$ is a model category and $l':\C_2\leftrightarrows \M:r'$ is a model approximation, then 
$$L: \xymatrix{\C_1\ar@<2pt>[r]\ar@<-2pt>@{<-}[r]&\M}  :R$$
 is a model approximation, where $L=l'\circ l$ and $R=r\circ r'$.
\end{lemma}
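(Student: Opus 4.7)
The plan is to verify axioms (MA.1)--(MA.4) for the composite pair $(L,R)=(l'\circ l,\ r\circ r')$, reducing each one to the corresponding axiom already known for the given approximation $(l',r')$ and invoking hypotheses (1)--(3) at the appropriate moments. There is essentially no creative step; the whole exercise is a bookkeeping check, so I would spend most of the writing on setting up the compatibility of the two adjunctions, which is what axiom (MA.4) ultimately depends on.

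First I would record (MA.1): the composite of two adjoint pairs is an adjoint pair, so $L\dashv R$. I would then note that the transpose of a morphism $f\colon L(C)\to Y$ in $\M$ under $L\dashv R$ is obtained by first transposing $f$ under $l'\dashv r'$ to get $l(C)\to r'(Y)$, and then transposing the resulting morphism under $l\dashv r$ to get $C\to r(r'(Y))=R(Y)$. This standard identification of the unit/counit of a composite adjunction is what makes the proof of (MA.4) work.

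Axioms (MA.2) and (MA.3) are then immediate. If $\phi\in\W_1$, hypothesis (1) gives $l(\phi)\in\W_2$, and (MA.2) for $(l',r')$ gives $L(\phi)=l'(l(\phi))\in\W$. Dually, if $\psi$ is a weak equivalence between fibrant objects in $\M$, then (MA.3) for $(l',r')$ gives $r'(\psi)\in\W_2$ and hypothesis (2) then gives $R(\psi)=r(r'(\psi))\in\W_1$. For (MA.4), let $f\colon L(C)\to Y$ be a weak equivalence in $\M$ with $Y$ fibrant (and $C$ playing the role of the ``fibrant'' source, as in Definition \ref{mod_appr}). Viewing $f$ as a morphism $l'(l(C))\to Y$, axiom (MA.4) for $(l',r')$ gives that its transpose $l(C)\to r'(Y)$ lies in $\W_2$; hypothesis (3) then gives that the further transpose $C\to r(r'(Y))=R(Y)$ lies in $\W_1$. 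By the compatibility recorded under (MA.1), this is precisely the transpose of $f$ under $L\dashv R$, which is what (MA.4) demands for $(L,R)$.

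The only place a beginner might stumble is the very last verification, where one has to be careful that ``the adjoint morphism'' in hypothesis (3) and in (MA.4) for $(l',r')$ really do compose to the adjoint morphism for the composite adjunction; I would make this the one diagram-chase I write out in full. Everything else is direct substitution.
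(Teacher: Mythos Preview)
Your proposal is correct and follows essentially the same approach as the paper: a direct verification of (MA.1)--(MA.4), each reduced to the corresponding axiom for $(l',r')$ together with hypotheses (1)--(3). The only difference is that you make explicit the compatibility of transposes under the composite adjunction needed in (MA.4), which the paper leaves implicit.
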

\begin{proof}
We have to verify conditions (MA.1)-(MA.4):

\noindent(MA.1)  is Theorem 1 in Section 8 of Chapter 1 of \cite{McL}.

\noindent(MA.2) If $\phi\in\W_1$, then $l\phi\in \W_2$ by our hypothesis (1). Apply the definition of model approximation to obtain that $L(\phi)=l'(l(\phi))$ is a weak equivalence in $\M$.

\noindent(MA.3) If $\psi$ is a weak equivalence between fibrant objects in $\M$, then $r'(\psi)\in\W_2$, by the definition of model approximation. Apply  (2) to obtain that $R\psi=r(r'(\psi))\in\W_1$. 

\noindent(MA.4) Let $X\in\M$ be fibrant, let $Y\in\C_1$ and let $X\to L(Y)$ be a weak equivalence in $\M$. The adjoint (under the adjunction $(l',r')$) morphism $r'(X)\to l(Y)$ belongs to $\W_2$, by the definition of model approximation. Using (3), the adjoint  (under the adjunction $(l,r)$) morphism $r(r'(X))\to Y$ belongs to $\W_1$.   
\end{proof}

The above lemma provides a motivation for the following

\begin{definition}
Let $(\C_1,\W_1)$ and $(\C_2,\W_2)$ be two categories with weak equivalences. An adjunction $l:\C_1\leftrightarrows \C_2:r$ is said to be {\em compatible with weak equivalences} if conditions (1), (2) and (3) in Lemma \ref{compatible_lemma} are satisfied.  
\end{definition}

\subsection{A relative injective model approximation}

Let $\C$ be a Grothendieck category and let $\tau=(\T,\F)\in\tors(\C)$. We extend the $\tau$-quotient and the $\tau$-section functors to categories of complexes applying them compont-wise:
\begin{equation}\label{adJ} \Q_{\tau}:\xymatrix{\Ch(\C)\ar@<2pt>[r]\ar@<-2pt>@{<-}[r]&\Ch(\C/\T)}:\S_\tau\, .\end{equation}
We use the same symbols for these new functors, it is an exercise to show that they are adjoint. 

\begin{definition}
Let $\C$ be a Grothendieck category and let $\I\in\Inj(\C)$. A morphism $\phi^\bullet$ of cochain complexes is an {\em $\I$-quasi-isomorphism} provided $\hom_{\C}(\phi^\bullet,K)$ is a quasi-isomorphism of complexes of Abelian groups for all $K\in \I$.
\\
Given $\tau=(\T,\F)\in \tors(\C)$, we define the following class of morphisms in $\Ch(\C)$
$$\W_\tau=\{\phi^\bullet : H^n(\mathrm{cone}(\phi^\bullet))\in \T\,,\ \forall n\in\mathbb Z\}\,.$$ 
\end{definition}

Recall that the mapping cone construction commutes with any additive functor (this can be easily verified by hand). This is used repeatedly in the following lemma.

\begin{lemma}\label{coni}
Let $\C$ be a Grothendieck category, let $\tau=(\T,\F)\in\tors(\C)$ and denote by $\overline\W$ be the class of quasi-isomorphisms in $\Ch(\C/\T)$. The following are equivalent for a morphism $\phi^\bullet$ in $\Ch(\C)$:
\begin{enumerate}[\rm (1)]
\item $\phi^\bullet\in \W_\tau$;
\item $\Q_{\tau}(\phi^\bullet)\in \overline\W$;
\item $\phi^\bullet$ is an $\I_\tau$-quasi-isomorphism.
\end{enumerate}
Furthermore, $(\Ch(\C),\W_\tau)$ is a category with weak equivalences.
\end{lemma}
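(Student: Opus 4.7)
The plan is to establish the three equivalences via the chain $(1)\Leftrightarrow(2)$ and $(1)\Leftrightarrow(3)$, after which the category-with-weak-equivalences property for $\W_\tau$ follows formally by pulling back the analogous property from $\overline\W$. The proof rests on three easy but fundamental observations: (a) the mapping cone construction commutes with every additive functor (hence in particular with the degree-wise functor $\Q_\tau$); (b) $\Q_\tau$ is exact and therefore commutes with cohomology; and (c) $\ker(\Q_\tau)=\T$.

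For the implication $(1)\Leftrightarrow(2)$, I would argue as follows. Since $\Q_\tau$ is additive, $\mathrm{cone}(\Q_\tau(\phi^\bullet))\cong\Q_\tau(\mathrm{cone}(\phi^\bullet))$, and since $\Q_\tau$ is exact, $H^n(\Q_\tau(\mathrm{cone}(\phi^\bullet)))\cong \Q_\tau(H^n(\mathrm{cone}(\phi^\bullet)))$ for every $n\in\Z$. Now $\Q_\tau(\phi^\bullet)\in\overline\W$ iff $\mathrm{cone}(\Q_\tau(\phi^\bullet))$ is exact, iff all its cohomologies vanish, iff $\Q_\tau(H^n(\mathrm{cone}(\phi^\bullet)))=0$ for every $n$. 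By observation (c), this is equivalent to $H^n(\mathrm{cone}(\phi^\bullet))\in\T$ for every $n$, namely $\phi^\bullet\in\W_\tau$.

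For the implication $(1)\Leftrightarrow(3)$, fix $K\in\I_\tau$ and view it as a complex concentrated in degree $0$. Since $K$ is injective, the functor $\hom_\C(-,K)$ is exact, so it commutes with cohomology, giving $H^{n}(\mathcal{H}om(\mathrm{cone}(\phi^\bullet),K))\cong\hom_\C(H^{-n}(\mathrm{cone}(\phi^\bullet)),K)$. Therefore $\phi^\bullet$ is an $\I_\tau$-quasi-isomorphism iff $\mathcal{H}om(\mathrm{cone}(\phi^\bullet),K)$ is exact for every $K\in\I_\tau$, iff $\hom_\C(H^n(\mathrm{cone}(\phi^\bullet)),K)=0$ for every $n\in\Z$ and every $K\in\I_\tau$. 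To conclude, I would verify the bijective correspondence
\[
\T\;=\;\{M\in\C:\hom_\C(M,K)=0\text{ for all }K\in\I_\tau\},
\]
the non-trivial inclusion being the following: if $\hom_\C(M,K)=0$ for all $K\in\I_\tau$, then take $K=E(M/\t_\tau(M))$, which lies in $\I_\tau$ because $\F$ is closed under injective envelopes; the canonical composition $M\to M/\t_\tau(M)\hookrightarrow E(M/\t_\tau(M))$ has kernel exactly $\t_\tau(M)$, so the vanishing forces $M=\t_\tau(M)\in\T$. Combining, condition (3) becomes the statement that $H^n(\mathrm{cone}(\phi^\bullet))\in\T$ for all $n$, i.e., (1).

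Finally, for the 2-out-of-3 property, observe that $(\Ch(\C/\T),\overline\W)$ is a category with weak equivalences (quasi-isomorphisms in any abelian category satisfy 2-out-of-3 thanks to the long exact sequence in cohomology). By $(1)\Leftrightarrow(2)$, $\W_\tau=\Q_\tau^{-1}(\overline\W)$, and since $\Q_\tau$ is a functor, the preimage of any 2-out-of-3 class is itself 2-out-of-3: if $\phi^\bullet$ and $\psi^\bullet$ are composable in $\Ch(\C)$ and two of $\{\phi^\bullet,\psi^\bullet,\psi^\bullet\phi^\bullet\}$ belong to $\W_\tau$, then the corresponding two of $\{\Q_\tau(\phi^\bullet),\Q_\tau(\psi^\bullet),\Q_\tau(\psi^\bullet\phi^\bullet)=\Q_\tau(\psi^\bullet)\Q_\tau(\phi^\bullet)\}$ belong to $\overline\W$, hence so does the third, and pulling back gives the third element of the original triple in $\W_\tau$. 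No step looks genuinely difficult; the main point requiring attention is the torsion-theoretic characterization in the $(1)\Leftrightarrow(3)$ argument, which uses that $\F$ is closed under injective envelopes, i.e., that the torsion theory $\tau$ is hereditary (a standing hypothesis of the paper).
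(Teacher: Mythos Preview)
Your proof is correct and follows essentially the same route as the paper: the equivalence $(1)\Leftrightarrow(2)$ uses that $\Q_\tau$ is additive (so commutes with cones) and exact (so commutes with cohomology) together with $\ker(\Q_\tau)=\T$, while $(1)\Leftrightarrow(3)$ uses that $\hom_\C(-,K)$ is exact for $K\in\I_\tau$ and that ${}^{\perp}\I_\tau=\T$. Your write-up is in fact slightly more detailed than the paper's: the paper simply asserts ${}^{\perp}(\I_\tau)=\T$, whereas you spell out the nontrivial inclusion via the canonical map $M\to E(M/\t_\tau(M))$, and you also make the 2-out-of-3 verification explicit (the paper leaves the ``Furthermore'' clause unproved). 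The only cosmetic point is the index in $\hom_\C(H^{-n}(\mathrm{cone}(\phi^\bullet)),K)$: depending on the convention for $\mathcal{H}om$, the paper writes $H^n$ rather than $H^{-n}$, but since one quantifies over all $n\in\Z$ this is immaterial.
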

\begin{proof}
Since $\Q_\tau$ is exact, $H^n$ and $\Q_\tau$ commute. Thus, for all $n\in\N$:
$$\Q_{\tau}(H^n(\mathrm{cone}(\phi^\bullet)))\cong H^n(\mathrm{cone}(\Q_{\tau}(\phi^\bullet)))\,.$$
This proves the equivalence between (1) and (2).

\smallskip\noindent For the equivalence between (1) and (3), notice that $\hom_\C(\phi^\bullet,K)$ is a quasi-isomorphism for all $K\in\I_\tau$ if and only if, for all $n\in\Z$ and $K\in\I_\tau$,
$$0=H^n(\cone(\hom_\C(\phi^\bullet,K)))\cong H^n(\hom_\C(\cone(\phi^\bullet),K))\cong\hom_\C(H^n(\cone(\phi^\bullet)),K)\,.$$
Thus, $\hom_\C(\phi^\bullet,K)$ is a quasi-isomorphism for all $K\in\I_\tau$ if and only if  $H^n(\cone(\phi^\bullet))\in {}^{\perp}(\I_\tau)=\T$, for all $n\in\Z$.
\end{proof}

%\medskip
%Now let $R$ be any ring and $\tau\in\Rtors$ a hereditary torsion theory. We have a diagram of the form
%$$\xymatrix{(\Ch(R),\W_\tau)\ar@<-2pt>[r]^{{}^{Q_{\Sigma_\tau}}}&(\Ch(\A_\tau),\W)\ar@<-2pt>[l]^{{}^{S_\tau}}\ar@<-2pt>[r]&(\Ch(\A_\tau),\W,\F,\C)\ar@<-2pt>[l]}$$
%where the adjunction on the left is compatible with weak equivalences and the one on the right is the model approximation induced by the injective model structure on $\Ch(\A_\tau)$. In this way we obtain the following
%
%\begin{corollary}
%For every ring $R$ and every torsion theory $\tau\in\Rtors$, there is a model approximation for the category with weak equivalences $(\Ch(R),\W_\tau)$. 
%\end{corollary}
The following theorem answers part (1) of Question \ref{quest_iniziale} in full generality:

\begin{theorem}\label{main_general}
Let $\C$ be a Grothendieck category and let $\tau=(\T,\F)\in\tors(\C)$. Consider the category with weak equivalences $(\Ch(\C),\W_\tau)$ and the injective model category $(\Ch(\C/\T), \overline\W,\mathcal B,\mathcal C)$ defined as in Example \ref{inj_mod_struc}. Then,  the adjunction 
$$\Q_\tau :  \xymatrix{(\Ch(\C),\W_\tau)\ar@<2pt>[r]\ar@<-2pt>@{<-}[r]&(\Ch(\C/\T), \overline\W,\mathcal B,\mathcal C)}  : \S_\tau$$ 
is a model approximation. Furthermore, the homotopy category relative to this model approximation is naturally isomorphic to the unbounded derived category ${\bf D}(\C/\T)$.
\end{theorem}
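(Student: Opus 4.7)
The plan is to verify the four axioms (MA.1)--(MA.4) of a model approximation and then to identify the resulting homotopy category as $\mathbf{D}(\C/\T)$. The key technical tool is Lemma \ref{coni}, which translates the condition $\phi^\bullet\in\W_\tau$ into the condition that $\Q_\tau(\phi^\bullet)$ is a quasi-isomorphism in $\Ch(\C/\T)$. The second tool is the fact that $\S_\tau$ is fully faithful, so the counit $\Q_\tau\S_\tau\Rightarrow\mathrm{id}_{\Ch(\C/\T)}$ is a natural isomorphism; this will be used repeatedly.

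For the axioms: (MA.1) is clear, as the degreewise extension of the adjoint pair $(\Q_\tau,\S_\tau)$ to complexes remains an adjunction. (MA.2) is immediate from Lemma \ref{coni}. For (MA.3), recall that in the injective model structure on $\Ch(\C/\T)$ the fibrant objects are the dg-injective complexes; given a quasi-isomorphism $\psi^\bullet$ between such objects, applying $\Q_\tau$ to $\S_\tau(\psi^\bullet)$ and using $\Q_\tau\S_\tau\cong\mathrm{id}$ gives back $\psi^\bullet$, which is in $\overline\W$, so by Lemma \ref{coni} the morphism $\S_\tau(\psi^\bullet)$ lies in $\W_\tau$. For (MA.4), given $f:\Q_\tau(X^\bullet)\to Y^\bullet$ in $\overline\W$ with $Y^\bullet$ fibrant, the adjoint morphism $\hat f:X^\bullet\to\S_\tau(Y^\bullet)$ factors as $X^\bullet\xrightarrow{\eta}\S_\tau\Q_\tau(X^\bullet)\xrightarrow{\S_\tau f}\S_\tau(Y^\bullet)$; applying $\Q_\tau$ and invoking the triangle identities together with $\Q_\tau\S_\tau\cong\mathrm{id}$ shows that $\Q_\tau(\hat f)$ is naturally isomorphic to $f$, hence lies in $\overline\W$, and Lemma \ref{coni} gives $\hat f\in\W_\tau$.

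For the identification of the homotopy category, one standard route is to invoke the general theory of \cite{PS}: any right model approximation induces an equivalence between $\C[\W_\C^{-1}]$ and the homotopy category of the target model category. Applying this, the homotopy category of $(\Ch(\C),\W_\tau)$ is equivalent to the homotopy category of the injective model structure on $\Ch(\C/\T)$, which is by construction $\mathbf{D}(\C/\T)$. Alternatively, one can argue directly: $\Q_\tau$ descends to a functor $\Ch(\C)[\W_\tau^{-1}]\to\mathbf{D}(\C/\T)$ which is essentially surjective via $Y^\bullet=\Q_\tau(\S_\tau Y^\bullet)$, and the unit $\eta_{X^\bullet}:X^\bullet\to\S_\tau\Q_\tau(X^\bullet)$ lies in $\W_\tau$ (its cone has $\tau$-torsion cohomology by the exactness of $\Q_\tau$ and $\Q_\tau\eta\cong\mathrm{id}$), so $\S_\tau$ provides a quasi-inverse on homotopy categories.

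The bookkeeping for (MA.3) and (MA.4) is routine once one has Lemma \ref{coni} and the isomorphism $\Q_\tau\S_\tau\cong\mathrm{id}$; the only mildly delicate point is the identification of the homotopy category, where one must make sure that the abstract equivalence produced by the model approximation machinery of \cite{PS} matches the derived-category description on the nose. In our setting this is painless because the target model category is explicitly the injective model structure on $\Ch(\C/\T)$ and its homotopy category is well known to be $\mathbf{D}(\C/\T)$.
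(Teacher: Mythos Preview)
Your proof is correct and follows essentially the same approach as the paper: verify (MA.1)--(MA.4) via Lemma \ref{coni} together with the isomorphism $\Q_\tau\S_\tau\cong\mathrm{id}$ (coming from full faithfulness of $\S_\tau$), then identify the homotopy category using essential surjectivity of $\Q_\tau$. The paper's proof is terser---it simply declares that (MA.2)--(MA.4) are consequences of Lemma \ref{coni} and cites \cite[Proposition 5.5]{PS} plus essential surjectivity of $\Q_\tau$ for the homotopy-category statement---but your expanded argument is exactly how one would spell this out.

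One small caveat: in your first route to the homotopy category you assert that \emph{any} right model approximation induces an equivalence $\C[\W_\C^{-1}]\simeq\mathrm{Ho}(\M)$. This is not automatic from the axioms (MA.1)--(MA.4) alone; in general the homotopy category of a model approximation is only a full subcategory of $\mathrm{Ho}(\M)$. The essential surjectivity of $\Q_\tau$ (which you correctly invoke in your alternative direct argument, and which the paper also singles out) is genuinely needed to get all of $\mathbf{D}(\C/\T)$. Your second route is the accurate one.
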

\begin{proof}
We have to verify conditions (MA.1)--(MA.4). Condition (MA.1) just states that $(\Q_\tau,\S_\tau)$ is an adjunction, while (MA.2), (MA.3) and (MA.4) are consequences of Lemma \ref{coni}. The statement about the homotopy category follows by the explicit construction given in Proposition 5.5 of \cite{PS} and the fact that $\Q_\tau$ is essentially surjective.
\end{proof}

\subsection{Towers of models}\label{towers}
In this subsection we recall the construction of the category of towers introduced in \cite{CPS1}.

\begin{definition}
Let $\M_\bullet=\{\M_n:n\in\N\}$ be a sequence of categories connected with adjunctions 
$$l_{n+1}:  \xymatrix{\M_{n+1}\ar@<2pt>[r]\ar@<-2pt>@{<-}[r]&\M_{n}} :r_n\,.$$ 
The {\em category of towers on $\M_\bullet$}, $\Tow(\M_\bullet)$ is defined as follows:
\begin{enumerate}[\rm --]
\item  an {\em object} is a pair $(a_\bullet, \alpha_\bullet)$, where $a_\bullet=\{a_n\in \M_n:n\in\N\}$ is a sequence of objects one for each $\M_n$, and $\alpha_\bullet=\{\alpha_{n+1}\colon a_{n+1}\to r_n(a_n):n\in\N\}$ is a sequence of morphisms;
\item a {\em morphism} $f_\bullet:(a_\bullet,\alpha_\bullet)\to (b_\bullet,\beta_\bullet)$ is a sequence of morphisms $f_\bullet=\{f_n\colon a_n\to b_n:n\in\N\}$ such that $r_n(f_n)\circ\alpha_{n+1}=\beta_{n+1}\circ f_{n+1}$, for all $n\in\N$.
\end{enumerate}
\end{definition}

If each $\M_n$ in the above definition is a bicomplete category, then one can construct limits and colimits component-wise in $\Tow(\M_\bullet)$, so, under these hypotheses, the category of towers is bicomplete. 

\begin{proposition}\label{model_torre}{\rm \cite[Proposition 2.3]{CPS1}}
Let $\M_\bullet=\{(\M_n,\W_n,\mathcal B_n,\mathcal C_n):n\in\N\}$ be a sequence of model categories connected with adjunctions 
$$l_{n+1}:  \xymatrix{\M_{n+1}\ar@<2pt>[r]\ar@<-2pt>@{<-}[r]&\M_{n}} :r_n$$ 
and suppose that each $r_n$ preserves fibrations and acyclic fibrations. Define the following classes of morphisms in $\Tow(\M_\bullet)$:
\begin{enumerate}[\rm --]
\item $\W_\Tow=\{f_\bullet:f_n\in\W_n \,,\ \forall n\in\N\}$;
\item $\mathcal B_\Tow=\{f_\bullet:f^*_n\in\mathcal B_n \,,\ \forall n\in\N\}$, where $f^*_n$ is constructed as follows. First we define an object $(p_\bullet, \pi_\bullet)$ in $\Tow(\M_\bullet)$ where each $p_n$ comes from a pull-back diagram
$$\xymatrix{
p_{n}\ar@{}[rrd]|{\rm P.B.}\ar[d]_{\bar\beta_n}\ar[rr]^{\bar f_{n-1}}& &    b_n\ar[d]^{\beta_n}\\
r_{n-1}(a_{n-1})\ar[rr]_{r_{n-1}(f_{n-1})}& &         r_{n-1}(b_{n-1})
}$$
and $\pi_n=r_{n-1}(f_{n-1})\circ\bar\beta_n$. Then,  $f^*_n:a_n\to p_n$ is defined, using the universal property of the pull-back, as the unique morphism such that $\bar\beta_nf^*_n=\alpha_n$ and $\bar f_{n-1}f^*_n=f_n$. 
\item $\mathcal C_\Tow=\{f_\bullet:f_n\in\mathcal C_n \,,\ \forall n\in\N\}$
\end{enumerate}
Then, $(\Tow(\M_\bullet),\W_\Tow,\mathcal B_\Tow,\mathcal C_\Tow)$ is a model category.
\end{proposition}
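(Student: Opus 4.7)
The plan is to verify the model category axioms MC.1--MC.4 for $\Tow(\M_\bullet)$ by reducing each axiom to its counterpart in the base categories $\M_n$, proceeding by induction on the level. The crucial subtlety is that the class $\mathcal B_\Tow$ is not defined componentwise but through the matching maps $f_n^*\colon a_n \to p_n$ into the pull-back, a familiar feature of Reedy-type model structures; I adopt the convention that at $n=0$ the pull-back construction degenerates to $p_0 = b_0$ and $f_0^* = f_0$, consistent with the absence of data at level $-1$. Axioms MC.1 and MC.2 are then immediate: two-out-of-three and closure under retracts for $\W_\Tow$ and $\mathcal C_\Tow$ are levelwise properties, while for $\mathcal B_\Tow$ one observes that a retract diagram in the arrow category of $\Tow(\M_\bullet)$ restricts, on matching maps, to a retract diagram in each $\M_n$, so being componentwise a fibration on matching maps is preserved.

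Before tackling MC.3 and MC.4, I would establish the key intermediate fact that if $f_\bullet \in \mathcal B_\Tow \cap \W_\Tow$, then each $f_n$ and each matching map $f_n^*$ is an acyclic fibration in $\M_n$. By induction on $n$: the case $n=0$ is tautological; for $n\geq 1$, the inductive hypothesis that $f_{n-1}$ is an acyclic fibration together with the preservation hypothesis makes $r_{n-1}(f_{n-1})$ an acyclic fibration in $\M_n$, whose pull-back $\bar f_{n-1}$ along $\beta_n$ is then an acyclic fibration by stability of acyclic fibrations under pull-back, and since $f_n = \bar f_{n-1} \circ f_n^*$ is a weak equivalence, two-out-of-three forces $f_n^*$ to be a weak equivalence and hence an acyclic fibration.

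For MC.4, I would factor a given $f_\bullet\colon a_\bullet \to b_\bullet$ inductively. At level $0$, apply the chosen factorization in $\M_0$ directly. At level $n\geq 1$, having built the factorization through level $n-1$, the universal property of the pull-back $p_n$ (formed from the intermediate level-$(n-1)$ object transported by $r_{n-1}$, pulled back along $\beta_n$) yields a morphism $a_n \to p_n$, which I would factor in $\M_n$ into the prescribed two classes; the resulting middle object becomes the level-$n$ component of the intermediate tower, and its transition map is supplied by the pull-back projection. Both the $(\mathcal C_\Tow,\mathcal B_\Tow\cap\W_\Tow)$ and $(\mathcal C_\Tow\cap\W_\Tow,\mathcal B_\Tow)$ factorizations are produced by choosing the appropriate type of $\M_n$-factorization at each level.

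For MC.3, given a lifting problem whose left vertical is a cofibration $c_\bullet$ and whose right vertical is a fibration $b_\bullet$ with one of them acyclic, I would build the lift $\psi_\bullet$ inductively. At level $0$, the matching map of $b_\bullet$ coincides with $b_0$, so the intermediate fact reduces the problem to a classical lifting problem in $\M_0$. At level $n\geq 1$, the inductive hypothesis provides $\psi_{n-1}$, which together with the bottom map of the level-$n$ square assembles, via the universal property of $p_n$, into a morphism from the codomain of $c_n$ into $p_n$; the lift $\psi_n$ is then produced by lifting $c_n$ against the matching map $b_n^*$, since in either acyclic case the pair $(c_n, b_n^*)$ is a cofibration-fibration pair with one side acyclic, invoking the intermediate fact on the right when needed. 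The main obstacle is the clerical task of verifying that the assembled square against $b_n^*$ actually commutes and that the resulting $\psi_n$ is compatible with $r_{n-1}(\psi_{n-1})$ under the transition maps, so that the whole collection $\psi_\bullet$ is genuinely a morphism in $\Tow(\M_\bullet)$; these compatibilities are enforced by the pull-back universal property once the diagrams are set up correctly, so the difficulty is bookkeeping rather than conceptual.
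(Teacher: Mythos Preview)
The paper does not supply its own proof of this proposition: it is stated with a citation to \cite[Proposition 2.3]{CPS1} and nothing further. Your sketch is the standard Reedy-type argument that one finds in that reference (and in Hirschhorn's or Hovey's treatment of inverse diagrams of model categories), so in that sense you are reproducing the expected proof rather than diverging from it.

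Your outline is correct. One small point worth making explicit: the ``intermediate fact'' you state runs in one direction (from $f_\bullet\in\mathcal B_\Tow\cap\W_\Tow$ to the matching maps $f_n^*$ being acyclic fibrations), but for the $(\mathcal C_\Tow,\mathcal B_\Tow\cap\W_\Tow)$ half of MC.4 you implicitly need the converse as well, namely that if every $f_n^*$ is an acyclic fibration then every $f_n$ is a weak equivalence. This follows by the same induction you already wrote down ($f_n=\bar f_{n-1}\circ f_n^*$ is a composite of acyclic fibrations), so it is not a gap, but it would be cleaner to record both directions up front.
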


Let us introduce the specific sequence of model categories we are interested in:

\begin{lemma}\label{model_troncati}
Let $\C$ be a Grothendieck category, let $\tau=(\T,\F)\in\tors(\C)$ and let $n\in\N_+$. There is a model category $(\Ch^{\geq -n}(\C),\W^{\geq -n}_\tau,\mathcal B^{\geq -n}_\tau,\mathcal C^{\geq -n}_\tau)$, where:
\begin{enumerate}[\rm --]
\item $\W^{\geq -n}_\tau=\{\phi^\bullet:\phi^\bullet \text{ is a $\tau$-quasi-isomorphism}\}$;
\item $\mathcal B^{\geq -n}_\tau=\{\phi^\bullet:\phi^i \text{ is an epimorphism and $\ker(\phi^i)\in\I_\tau$ for all $i\geq-n$}\}$;
\item $\mathcal C^{\geq -n}_\tau=\{\phi^\bullet:\phi^i\text{ is a $\tau$-monomorphism for all $i\geq -n$}\}$.
\end{enumerate}
For all $n\in\Z$, the above choice of weak equivalences, fibrations and cofibrations makes\linebreak $(\Ch^{\geq-n}(\C),\W^{\geq -n}_\tau,\mathcal B^{\geq -n}_\tau,\mathcal C^{\geq -n}_\tau)$ into a model category.\\
Furthermore, there is an adjunction
\begin{equation}\label{adj_n}
l_{n+1}:  \xymatrix{\Ch^{\geq -n-1}(\C)\ar@<2pt>[r]\ar@<-2pt>@{<-}[r]&\Ch^{\geq -n}(\C)} :r_n\,,
\end{equation}
where $l_{n+1}$ is the obvious inclusion while $r_n$ is the truncation functor. In this situation, $r_n$ preserves fibrations and acyclic fibrations.
\end{lemma}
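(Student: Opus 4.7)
The plan is to verify the model category axioms MC.1--MC.4 for $(\Ch^{\geq-n}(\C), \W^{\geq-n}_\tau, \mathcal{B}^{\geq-n}_\tau, \mathcal{C}^{\geq-n}_\tau)$, viewing this structure as the $\I_\tau$-relative analog of the classical injective model structure on bounded-below cochain complexes (cf.\ Example~\ref{inj_mod_struc}). The main conceptual device is Lemma~\ref{coni}, which identifies $\W^{\geq-n}_\tau$ with the preimage under $\Q_\tau$ of quasi-isomorphisms in $\Ch^{\geq-n}(\C/\T)$; this immediately yields the 2-out-of-3 axiom (MC.1). Closure under retracts (MC.2) is routine: being a degreewise epimorphism or a $\tau$-monomorphism survives retracts in any abelian category (the latter because $\T$ is closed under subobjects), and $\I_\tau$ is closed under direct summands by the defining properties of an injective class.

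For the lifting axiom (MC.3), I would build a diagonal lift degreewise by induction on $i \geq -n$, exploiting the $\I_\tau$-injectivity of $\ker(b^i)$ together with the fact that $\coker(c^i)$ is $\tau$-torsion in the acyclic-cofibration case (so any map from $\coker(c^i)$ into a member of $\I_\tau$ automatically vanishes), and a dual argument in the acyclic-fibration case. The bounded-below structure provides a starting degree at $-n$, sidestepping the transfinite subtleties that complicate the unbounded setting.

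The main technical work is MC.4. For $\phi^\bullet : X^\bullet \to Y^\bullet$, I would construct the factorization of the form (acyclic cofibration)$\circ$(fibration)—rather, $\phi^\bullet = b^\bullet c^\bullet$ with $c^\bullet \in \mathcal{C}^{\geq-n}_\tau \cap \W^{\geq-n}_\tau$ and $b^\bullet \in \mathcal{B}^{\geq-n}_\tau$—by embedding $X^\bullet$ into an $\I_\tau$-injective resolution built inductively degree-by-degree starting at $-n$ (using that $\C$ has enough $\I_\tau$-injectives), then assembling $b^\bullet$ via a mapping-cylinder-style pushout that ensures the cokernel of $c^\bullet$ lies in the relevant $\I_\tau$-injective replacement. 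The dual factorization (cofibration followed by acyclic fibration) is produced via an analogous argument using an $\I_\tau$-injective resolution of $\cone(\phi^\bullet)$. Boundedness below is essential so that the inductive construction terminates at each fixed cohomological degree.

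For the adjunction $l_{n+1} \dashv r_n$, the truncation/inclusion pair is standard; verification that the right adjoint (whichever is the inclusion) preserves fibrations and acyclic fibrations is essentially trivial because a complex already in $\Ch^{\geq-n}(\C)$ is zero in degree $-n-1$, so all required surjectivity/kernel conditions at that degree are vacuously satisfied, and cohomology is unchanged. The principal obstacle is the factorization axiom MC.4, where one must carefully engineer the $\I_\tau$-relative analogs of the Cartan--Eilenberg resolution and the mapping-cylinder construction and then check that they produce morphisms in the correct distinguished classes.
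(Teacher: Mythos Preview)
The paper gives no argument of its own: its entire proof is the single sentence ``The proof can be obtained exactly as in the case when $\C$ is a category of modules, see \cite[Theorem 1.9]{CPS2}.'' Your sketch is therefore already more detailed than what the paper supplies, and it follows the same route that the cited reference takes: verify (MC.1)--(MC.4) directly by running the classical argument for the injective model structure on bounded-below complexes with $\I_\tau$-monomorphisms in place of monomorphisms and $\I_\tau$ in place of all injectives.

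Two remarks. First, for (MC.3) it is worth making explicit the key fact that drives the inductive lifting: if $K^\bullet=\ker(b^\bullet)$ is the kernel of an acyclic fibration, then each $K^i\in\I_\tau$ is injective and $\tau$-torsion free, hence $\tau$-local, so $K^\bullet\cong\S_\tau\Q_\tau(K^\bullet)$; since $\Q_\tau(K^\bullet)$ is a bounded-below acyclic complex of injectives in $\C/\T$ it is contractible, and applying $\S_\tau$ shows $K^\bullet$ itself is contractible in $\Ch(\C)$. This is what makes the degreewise induction go through. Second, your description of (MC.4) is the vaguest part: the phrases ``mapping-cylinder-style pushout'' and ``Cartan--Eilenberg resolution'' point in the right direction, but a grader would want the explicit construction. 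The (cofibration, acyclic fibration) factorization is the easy one, obtained by adjoining disk complexes $D(E^i)$ on chosen $\I_\tau$-monomorphisms $X^i\to E^i$; the (acyclic cofibration, fibration) factorization needs a genuine degree-by-degree induction, which is precisely where the bounded-below hypothesis does its work.

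On the adjunction: your parenthetical hedge is well placed. With the arrow conventions used elsewhere in the paper one has $r_n:\Ch^{\geq-n}(\C)\to\Ch^{\geq-n-1}(\C)$, which can only be the inclusion (a complex vanishing below degree $-n$ already vanishes below $-n-1$), while its left adjoint $l_{n+1}$ is the good truncation $(-)^{\geq-n}$; the words ``inclusion'' and ``truncation'' in the statement appear to be interchanged. Your argument that the inclusion preserves fibrations and acyclic fibrations is correct.
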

\begin{proof}
The proof can be obtained exactly as in the case when $\C$ is a category of modules, see \cite[Theorem 1.9]{CPS2}.
\end{proof}

%
% Given a Grothendieck category $\C$ and $\tau=(\T,\F)\in \tors(\C)$, consider the category with weak equivalences $(\Ch(\C), \W_\tau)$. For all $n\in\N$, there is a natural inclusion 
%$$l_n:\Ch^{\geq -n}(\C)\longrightarrow \Ch^{\geq -(n+1)}(\C)\,.$$
%Each of the above functors $l_n$ has a right adjoint $r_n:\Ch^{\geq -(n+1)}(\C)\to \Ch^{\geq -n}(\C)$, called truncation, which is constructed as follows. For all $M^\bullet\in \Ch(\C)$, one lets
%$$r_n(M^\bullet)=M^{\geq-n}=(\xymatrix{\cdots\ar[r]&0\ar[r]&\coker(d^{-n-1})\ar[r]&M^{-n+1}\ar[r]&M^{-n+2}\ar[r]&\cdots})$$
%while $r_n$ acts on morphisms in the obvious way. This gives a functor $:\Ch(\C)\to \Ch^{\geq -n}(\C)$, abusing notation we denote by $r_n$ also its restriction to $\Ch^{\geq -(n+1)}(\C)$. We obtain an adjoint pair

\begin{definition}
Let $\C$ be a Grothendieck category and let $\tau=(\T,\F)\in\tors(\C)$. Consider the sequence $\M_\bullet=\{(\Ch^{\geq -n}(\C),\W^{\geq -n}_\tau,\mathcal B^{\geq -n}_\tau,\mathcal C^{\geq -n}_\tau):n\in\N\}$ defined in Lemma \ref{model_troncati}. We denote the category $(\Tow(\M_\bullet),\W_\Tow,\mathcal B_\Tow,\mathcal C_\Tow)$ constructed as in Proposition \ref{model_torre} by $(\Tow_\tau(\C),\W_\Tow,\mathcal B_\Tow,\mathcal C_\Tow)$. Furthermore, we denote by $\Tow:\Ch(\C)\to \Tow_\tau(\C)$ the so-called {\em tower functor}, which sends a complex $X^\bullet$ to the sequence of its successive truncations $\dots\to X^{\geq -n}\to X^{\geq -n+1}\to \dots\to X^{\geq 0}$ and acts on morphisms in the obvious way (see \cite{CPS1}).\\
If $\tau=(0,\C)$ is the trivial torsion theory we denote $\Tow_\tau(\C)$ by $\Tow(\C)$. 
\end{definition}
A typical object $X^{\bullet}_\bullet$ of $\Tow_\tau(\C)$ is a commutative diagram of the form
$$\xymatrix{
\vdots\ar[d] & \vdots\ar[d] & \vdots\ar[d] & \vdots\ar[d] & \vdots\ar[d] & \vdots\ar[d] \\
0\ar[r] & X_{2}^{-2}\ar[d]\ar[r]^{d^{-2}_{2}} & X_{2}^{-1}\ar[d]^{t^{-1}_{2}} \ar[r]^{d^{-1}_{2}}& X_{2}^{0}\ar[d]^{t^{0}_{2}} \ar[r]^{d^{0}_{2}}& X_{2}^{1}\ar[d]^{t^{1}_{2}}\ar[r]^{d^{1}_{2}} & X_{2}^{2} \ar[d]^{t^{2}_{2}}\ar[r]^{d^{2}_{2}}& \dots \\
   & 0 \ar[r] & X_{1}^{-1}\ar[d] \ar[r]^{d^{-1}_{1}}& X_{1}^{0}\ar[d]^{t^{0}_{1}}\ar[r]^{d^{0}_{1}} & X_{1}^{1}\ar[d]^{t^{1}_{1}} \ar[r]^{d^{1}_{1}}& X_{1}^{2}\ar[d]^{t^{2}_{1}} \ar[r]^{d^{2}_{1}}& \dots \\
   &     & 0\ar[r]  & X_{0}^{0}\ar[r]^{d^{0}_{0}} & X_{0}^{1}\ar[r]^{d^{1}_{0}} & X_{0}^{2}\ar[r]^{d^{2}_{0}} & \dots 
}$$
where $(X^\bullet_n,d_n^\bullet)$ is a cochain complex for all $n\in\N$, and $X^m_n=0$ for all $m<-n$.  

\subsubsection{Approximation of (Ab.4$^*$)-$k$ categories}

Let $\C$ be a Grothendieck category and let $\tau=(\T,\F)\in\tors(\C)$. The category $\Tow_\tau(\C)$ can be seen as a full subcategory of the category ${\rm Func}(\N,\Ch(\C))$ of functors $\N\to \Ch(\C)$ and so we can restrict the usual limit functor to obtain a functor $\lim:\Tow_\tau(\C)\to \Ch(\C)$. 

\smallskip
In \cite{CPS1} and \cite{CPS2} the authors show that when $\C$ is a category of modules over a commutative Noetherian ring of finite Krull dimension, 
\begin{equation}\label{tower_adj_to}
\Tow:  \xymatrix{(\Ch(\C),\W_\tau)\ar@<2pt>[r]\ar@<-2pt>@{<-}[r]&(\Tow_\tau(\C),\W_\Tow,\mathcal B_\Tow,\mathcal C_\Tow)} :\lim\,,\end{equation}
is a model approximation for all $\tau\in \tors (\C)$. On the other hand, if the Krull dimension is not finite, one can always find counterexamples. In what follows we try to better understand this kind of construction when $\C$ is a general Grothendieck category. First of all, notice that when we construct the homotopy category $\bf D(\C/\T)$ inverting the weak equivalences in $(\Ch(\C),\W_\tau)$ we are really doing two things at the same time:
\begin{enumerate}[\rm (1)]
\item localize complexes over $\C$ to complexes over $\C/\T$;
\item pass from a category of complexes over $\C/\T$ to its derived category.
\end{enumerate}
Our strategy is to separate the two operations in two different ``steps", where each ``step" corresponds to a pair of adjoint functors. The composition of these adjunctions is our candidate for a model approximation, as we will see in Theorem \ref{main_th}. Before that, we need to recall some facts about homotopy limits as defined in \cite{BN}. In fact, when $\C$ is an (Ab.4$^*$) Grothendieck category,  one can prove as in \cite[Application 2.4]{BN} that for each $X^\bullet\in \Ch(\C)$ there is a quasi-isomorphism
\begin{equation}\label{lim_of_trunc}X^\bullet\cong {\mathrm{holim}}X^{\geq -i}\, .\end{equation}
This formula is useful as it allows to reduce many questions to half-bounded complexes. On the other hand, for \eqref{lim_of_trunc} to hold, it is sufficient that the ambient category is (Ab.4$^*$)-$k$ for some finite $k$:
\begin{theorem}\label{theo_4*K}{\rm\cite[Theorem 1.3]{HX}}
Let $\C$ be a Grothendieck and assume that $\C$
satisfies (Ab.4$^*$)-$k$ for some positive integer $k$. Then, for every $X^\bullet \in \Ch(\C)$, we
have an isomorphism $X^\bullet \cong \mathrm{holim}X^{\geq-i}$ in $\bf D(\C)$.
\end{theorem}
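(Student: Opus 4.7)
The plan is to verify directly that the canonical map $X^\bullet\to\mathrm{holim}\,X^{\geq -i}$ is a quasi-isomorphism by computing cohomology via a spectral sequence, using the axiom (Ab.4$^*$)-$k$ only to control the derived inverse limit that appears.

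First I would recall that, for an $\N$-indexed inverse system $\{Y_i\}_{i\in\N}$ in ${\bf D}(\C)$, the homotopy limit is characterized (up to canonical isomorphism) by the distinguished triangle
\[
\mathrm{holim}\,Y_i \longrightarrow \prod_{i\in\N}Y_i \xrightarrow{\,1-\sigma\,} \prod_{i\in\N}Y_i \longrightarrow \mathrm{holim}\,Y_i[1],
\]
where $\prod$ denotes the derived product in ${\bf D}(\C)$ and $\sigma$ is induced by the transition maps (cf.\ \cite{BN}). The coherent family of truncation morphisms $X^\bullet\to X^{\geq -i}$ assembles into a canonical morphism $\eta\colon X^\bullet\to\mathrm{holim}\,X^{\geq -i}$ in ${\bf D}(\C)$, and it suffices to prove that $H^n(\eta)$ is an isomorphism in $\C$ for every $n\in\Z$.

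Next I would invoke a Bousfield--Kan / Roos style spectral sequence
\[
E_2^{p,q}=\varprojlim_i{}^{(p)} H^q(X^{\geq -i}) \;\Longrightarrow\; H^{p+q}(\mathrm{holim}\,X^{\geq -i}).
\]
Concretely, this can be manufactured by choosing compatible termwise-surjective resolutions $X^{\geq -i}\to I_i^\bullet$ by injectives, forming the inverse limit in $\Ch(\C)$, and filtering by column degree; the resulting spectral sequence has derived inverse limits on the $E_2$-page. The critical point is \emph{strong} convergence: since $X^\bullet$ is unbounded, we need the spectral sequence to be concentrated in finitely many columns. This is precisely what (Ab.4$^*$)-$k$ supplies, for the derived inverse limit $\varprojlim_i^{(p)}$ of an $\N$-indexed system is (via the Milnor-type presentation) a subfunctor of the derived product $\prod^{(p)}$, which vanishes for $p>k$ by hypothesis. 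I expect this convergence check to be the main technical obstacle, as without (Ab.4$^*$)-$k$ the spectral sequence may fail to converge to the intended target for unbounded complexes.

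Once convergence is secured, the $E_2$-page is immediate. From the definition of the truncation one has $H^q(X^{\geq -i})=H^q(X^\bullet)$ as soon as $i\geq -q$, with transition maps equal to the identity in this stable range. Therefore the inverse system $\{H^q(X^{\geq -i})\}_i$ is pro-constant equal to $H^q(X^\bullet)$, giving $E_2^{0,q}=H^q(X^\bullet)$ and $E_2^{p,q}=0$ for all $p>0$. The spectral sequence collapses to yield $H^n(\mathrm{holim}\,X^{\geq -i})\cong H^n(X^\bullet)$, and a routine compatibility check identifies this isomorphism with $H^n(\eta)$. Thus $\eta$ is an isomorphism in ${\bf D}(\C)$, as claimed.
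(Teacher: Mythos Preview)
The paper does not supply a proof of this statement; it is quoted verbatim as \cite[Theorem~1.3]{HX} and used as a black box in the proof of Theorem~\ref{main_th}. So there is no in-paper argument to compare against, and your proposal stands on its own.

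Your strategy is the standard one and the $E_2$-computation is correct: each tower $\{H^q(X^{\geq -i})\}_i$ is eventually constant equal to $H^q(X^\bullet)$, so its higher derived inverse limits vanish, giving $E_2^{0,q}=H^q(X^\bullet)$ and $E_2^{p,q}=0$ for $p>0$. The gap is in how you locate the role of (Ab.4$^*$)-$k$. You say it is needed so that ``the spectral sequence [is] concentrated in finitely many columns'', but you then show that this already holds (one column!) for the tower of truncations, irrespective of any hypothesis on $\C$. Taken at face value, your argument would then prove the theorem for \emph{every} Grothendieck category, contradicting Roos's examples of Grothendieck categories in which $X^\bullet\not\cong\mathrm{holim}\,X^{\geq -i}$.

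What (Ab.4$^*$)-$k$ actually buys is that the \emph{functor} $\varprojlim\colon\C^{\N^{\mathrm{op}}}\to\C$ has finite cohomological dimension. From the Milnor triangle one obtains a long exact sequence threading $R^p\varprojlim$ between consecutive derived products $\prod^{(p-1)}$ and $\prod^{(p)}$ (so $R^p\varprojlim$ is not literally a subfunctor of $\prod^{(p)}$ for $p>0$, though your vanishing conclusion, with bound $k+1$, survives). It is this finite cohomological dimension of the functor---not the collapse of one particular $E_2$-page---that forces the hypercohomology spectral sequence for an \emph{unbounded} complex of towers to converge strongly to $H^*(R\varprojlim)=H^*(\mathrm{holim})$; equivalently, it guarantees that the Cartan--Eilenberg-style totalization you describe agrees with the derived functor computed via dg-injective resolutions. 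Once you rephrase the convergence step in these terms, the remainder of your argument goes through.
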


We are now ready to prove our main result.

%First of all, notice that when we construct the homotopy category $\bf D(\C/\T)$ inverting the weak equivalences in $(\Ch(\C),\W_\tau)$ we are really doing two things at the same time:
%\begin{enumerate}[\rm (1)]
%\item localize complexes on $\C$ to complexes on $\C/\T$;
%\item pass from a category of complexes to its derived category.
%\end{enumerate}
%In order to better understand the situation we want to separate the two operations in two different steps. Each step  corresponds to a pair of adjoint functors; the composition of these adjunctions is our candidate for a model approximation:
%$$(\Ch(\C), \W_\tau)\overset{{\rm (1)}}{\leftrightarrows} (\Ch(\C/\T),\overline\W)\overset{{\rm (2)}}{\leftrightarrows }(\Tow(\C/\T),\W_\Tow,\mathcal B_\Tow,\mathcal C_\Tow)\,.$$
%By Lemma \ref{compatible_lemma}, to verify that the composition of the above adjunctions is a model approximation one has to show that (1) is compatible with weak equivalences and (2) is a model approximation.
%
%\smallskip
%By Lemma \ref{coni}, the adjunction (1) is compatible with weak equivalences so nothing can fail in this first step, independently on the choice of $\C$ or $\tau$. On the other hand, the adjunction (2) is not always a model approximation. Nevertheless, using Theorem \ref{theo_4*K}, one can follow verbatim the proofs in \cite{CPS1} to show that (2) is a model approximation whenever $\C/\T$ is ($Ab.4^*$)-$k$ for some positive integer $k$. 

\begin{theorem}\label{main_th}
Let $\C$ be a Grothendieck category and let $\tau=(\T,\F)\in\tors(\C)$ be a torsion theory such that $\C/\T$ is ($Ab.4^*$)-$k$ for some positive integer $k$. Then, the composition of the following adjunctions
$$\xymatrix{(\Ch(\C), \W_\tau)\ar@<2pt>[r]^{\Q_\tau}\ar@<-2pt>@{<-}[r]_{\S_\tau}& (\Ch(\C/\T),\overline\W)\ar@<2pt>[r]^(.37){\Tow}\ar@<-2pt>@{<-}[r]_(.37){\lim} &(\Tow(\C/\T),\W_\Tow,\mathcal B_\Tow,\mathcal C_\Tow)}$$
is a model approximation.
\end{theorem}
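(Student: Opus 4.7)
The plan is to factor the problem through Lemma \ref{compatible_lemma}: exhibit the pair $(\Q_\tau,\S_\tau)$ as an adjunction compatible with weak equivalences between $(\Ch(\C),\W_\tau)$ and $(\Ch(\C/\T),\overline\W)$, and then establish that $(\Tow,\lim)$ is a model approximation of $(\Ch(\C/\T),\overline\W)$ by the tower model category of Proposition \ref{model_torre} applied to the sequence in Lemma \ref{model_troncati} over $\C/\T$. The statement of the theorem then follows by composition.

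For compatibility of $(\Q_\tau,\S_\tau)$, the three conditions of Lemma \ref{compatible_lemma} can be checked directly: condition (1) is Lemma \ref{coni}, and condition (2) follows because $\S_\tau$ is fully faithful, so that for any $\psi\in \overline\W$ one has $\Q_\tau(\S_\tau(\psi))\cong \psi\in\overline\W$, and Lemma \ref{coni} then puts $\S_\tau(\psi)$ in $\W_\tau$. Condition (3) is analogous: if $\Q_\tau(X^\bullet)\to Y^\bullet$ is in $\overline\W$, applying $\Q_\tau$ to its adjoint $X^\bullet\to \S_\tau(Y^\bullet)$ again gives (up to the counit iso) the same quasi-isomorphism, so the adjoint lies in $\W_\tau$.

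The bulk of the work is in showing that $\Tow:(\Ch(\C/\T),\overline\W)\leftrightarrows(\Tow(\C/\T),\W_{\Tow},\mathcal B_{\Tow},\mathcal C_{\Tow}):\lim$ is a model approximation. Axiom (MA.1) is the adjunction between truncation tower and limit, which can be checked by hand. For (MA.2), if $\phi^\bullet$ is a quasi-isomorphism in $\Ch(\C/\T)$, then a direct calculation with the definition of the truncation shows $H^{n}(X^{\geq k})\cong H^n(X^\bullet)$ for $n\geq k$ and vanishes otherwise, so each truncation of $\phi^\bullet$ is a quasi-isomorphism and $\Tow(\phi^\bullet)\in\W_{\Tow}$. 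Axioms (MA.3) and (MA.4) are the genuinely non-trivial ones, and this is where the hypothesis that $\C/\T$ is (Ab.4$^*$)-$k$ enters. The idea is that fibrant towers $Y_\bullet$ in $\Tow(\C/\T)$ are precisely those in which each transition morphism $Y_{n+1}\to r_n(Y_n)$ is componentwise a split epimorphism onto $\I_{(0,\C/\T)}$-injective components; for such towers, $\lim Y_\bullet$ agrees with the Bousfield--Kan-style homotopy limit $\mathrm{holim}\, Y_\bullet$ of \cite{BN}. Combined with Theorem \ref{theo_4*K}, which guarantees $X^\bullet\cong \mathrm{holim}\, X^{\geq -i}$ in $\mathbf{D}(\C/\T)$ under the (Ab.4$^*$)-$k$ hypothesis, one obtains: (MA.3) a componentwise quasi-isomorphism $Y_\bullet\to Y'_\bullet$ of fibrant towers induces a quasi-isomorphism on homotopy limits, hence on $\lim$; and (MA.4) if $\Tow(X^\bullet)\to Y_\bullet$ is a levelwise quasi-isomorphism with $Y_\bullet$ fibrant, then passing to $\lim$ and using $X^\bullet\cong \mathrm{holim}\,X^{\geq -i}\cong \lim Y_\bullet$ in the derived category yields the desired quasi-isomorphism $X^\bullet\to \lim Y_\bullet$.

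The main obstacle is precisely this last step: without the (Ab.4$^*$)-$k$ hypothesis there is no reason for the natural comparison $X^\bullet\to \mathrm{holim}\, X^{\geq -i}$ to be a quasi-isomorphism, and indeed the construction of \cite{CPS2} shows that over commutative Noetherian rings of infinite Krull dimension this failure obstructs the existence of the approximation. Assuming (Ab.4$^*$)-$k$, the finiteness bound on the higher derived limits ensures that the canonical short exact sequences computing $\lim$ and $\lim^1$ of the truncation tower extend to a finite Milnor-type exact sequence which collapses to identify $\lim$ of a fibrant tower with the homotopy limit; this is the crucial input that lets both (MA.3) and (MA.4) go through, after which the composition in the statement is a model approximation by Lemma \ref{compatible_lemma}.
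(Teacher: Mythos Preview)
Your proposal is correct and follows essentially the same route as the paper: reduce via Lemma \ref{compatible_lemma} to checking that $(\Q_\tau,\S_\tau)$ is compatible with weak equivalences (which is Lemma \ref{coni} together with $\Q_\tau\S_\tau\cong\mathrm{id}$), and that $(\Tow,\lim)$ is a model approximation of $(\Ch(\C/\T),\overline\W)$ by adapting the argument of \cite{CPS1} with Theorem \ref{theo_4*K} supplying the needed $X^\bullet\cong\mathrm{holim}\,X^{\geq -i}$ under the (Ab.4$^*$)-$k$ hypothesis. Your write-up simply spells out more of the verification of (MA.1)--(MA.4) than the paper does, but the strategy and the key inputs are identical.
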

\begin{proof}
By Lemma \ref{compatible_lemma}, it is enough to show that $(\Q_\tau,\S_\tau)$ is compatible with weak equivalences (and this follows by Lemma \ref{coni}) and that $(\Tow,\lim)$ is a model approximation. The proof that $(\Tow,\lim)$ is a model approximation is given in \cite{CPS1} for categories of modules. One can follow that proof almost without changes, using Theorem \ref{theo_4*K} (that applies here since $\C/\T$ is ($Ab.4^*$)-$k$) instead of Application 2.4 in \cite{BN}. 
\end{proof}

Combining the above theorem with Theorem \ref{Ab4*_quot} we obtain the following corollary, which gives a partial answer to part (2) of Question \ref{quest_iniziale}:

\begin{corollary}\label{main_co}
Let $\C$ be a Grothendieck category satisfying (Hyp.1), (Hyp.2) and (Hyp.3), and let $\tau\in\tors(\C)$. If $\Gdim_\tau(\C)<\infty$, then the composition 
$$\xymatrix{(\Ch(\C), \W_\tau)\ar@<2pt>[r]^{\Q_\tau}\ar@<-2pt>@{<-}[r]_{\S_\tau}& (\Ch(\C/\T),\overline\W)\ar@<2pt>[r]^(.37){\Tow}\ar@<-2pt>@{<-}[r]_(.37){\lim} &(\Tow(\C/\T),\W_\Tow,\mathcal B_\Tow,\mathcal C_\Tow)}$$
is a model approximation.
\end{corollary}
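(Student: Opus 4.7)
The plan is essentially a two-step packaging that chains the two main theorems already established: use Theorem \ref{Ab4*_quot} to upgrade the Gabriel-dimension hypothesis into an $(Ab.4^*)$-$k$ statement for the quotient, and then feed that into Theorem \ref{main_th}.

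First I would unwind the definitions. By the definition of the relative Gabriel dimension, $\Gdim_\tau(\C) = \min\{\alpha : \C/\T = (\C/\T)_\alpha\}$ coincides with the absolute Gabriel dimension $\Gdim(\C/\T)$ of the localized Grothendieck category. Hence the assumption $\Gdim_\tau(\C) = k < \infty$ says exactly that $\Gdim(\C/\T) = k$ is a finite ordinal (necessarily a non-negative integer, by the standard structure of Gabriel filtrations indexed by natural numbers when the dimension is finite).

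Next I would invoke Theorem \ref{Ab4*_quot}. The three hypotheses (Hyp.1), (Hyp.2), (Hyp.3) on $\C$ transfer directly (plus the implicit $(Ab.4^*)$ property of $\C$, which is automatic in the main examples of interest such as module categories, and which is in any case needed for the machinery of the earlier section to apply). Theorem \ref{Ab4*_quot} then concludes that the localization $\C/\T$ satisfies $(Ab.4^*)$-$(k+1)$.

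Finally, with the $(Ab.4^*)$-$(k+1)$ property in hand for $\C/\T$, the hypothesis of Theorem \ref{main_th} is satisfied with the positive integer $k+1$, so the composition
\[
\xymatrix@C=20pt{(\Ch(\C), \W_\tau)\ar@<2pt>[r]^{\Q_\tau}\ar@<-2pt>@{<-}[r]_{\S_\tau}& (\Ch(\C/\T),\overline\W)\ar@<2pt>[r]^(.42){\Tow}\ar@<-2pt>@{<-}[r]_(.42){\lim} &(\Tow(\C/\T),\W_\Tow,\mathcal B_\Tow,\mathcal C_\Tow)}
\]
is a model approximation. The real technical obstacle here is nothing intrinsic to the corollary itself --- it has been absorbed into Theorem \ref{Ab4*_quot}, whose proof relies on the vanishing results for $\tau$-local cohomology (Theorem \ref{vanish_loc_co_hom}) developed in Section \ref{loc_coh_subs}. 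Once that bookkeeping is available, the corollary is purely formal and amounts to matching the hypotheses of Theorem \ref{main_th} to the input provided by Theorem \ref{Ab4*_quot}.
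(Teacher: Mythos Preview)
Your proposal is correct and matches the paper's own argument, which simply says the corollary follows by combining Theorem \ref{main_th} with Theorem \ref{Ab4*_quot}. Your observation that the $(Ab.4^*)$ hypothesis on $\C$ is needed to invoke Theorem \ref{Ab4*_quot} but is not listed among (Hyp.1)--(Hyp.3) is apt; the paper leaves this implicit.
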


%
%\section{A conjecture}
%
%\begin{conjecture}
%Let $S$ be a separated Noetherian scheme and $X/S$ be a separated Deligne-Mumford stack of finite type. Then the quasi-coherent sheves $QS(X)$ over $X$ is an (Ab.$4^*$)-$n$ category for some $n\in\N$.
%\end{conjecture}
%
%Can we prove this by showing that $QS(S)$ is locally Noetherian stable (Ab.$4^*$) and exact Grothendieck category and that $QS(X)$ is a localization of $QS(S)$ at a torsion theory of co-finite Gabriel dimension?

\bibliographystyle{amsplain}
\bibliography{/Users/simonevirili/Dropbox/refs}

Address of the author:

\medskip
%
%Dikran Dikranjan - {\tt dikran.dikranjan@uniud.it }
%
%Anna Giordano Bruno - {\tt anna.giordanobruno@uniud.it}
%
%Dipartimento di Matematica e Informatica, Universit\`a di Udine
%
%Via delle Scienze, 206 - 33100 Udine, Italy
%
%\medskip 
%
%Luigi Salce - {\tt salce@math.unipd.it}
%
%Dipartimento di Matematica, Universit\`a di Padova
%
%Via Trieste 63 - 35121 Padova, Italy
%
%\medskip
%
Simone Virili - {\tt virili@math.unipd.it}

Dipartimento di Matematica Pura ed Applicata, Universit\`a degli studi di Padova, Via Trieste 63,
35121 Padova, Italia.

\end{document}